%% file: Comparison.tex
\documentclass[12pt]{article}
\usepackage[top=3cm, bottom=3cm, left=2cm, right=2cm]{geometry}
\input{intro.tex}

\begin{document}

\title{\textbf{Cohomological Obstructions for Varieties over $p$-adic Function Fields}}
\author{Yisheng TIAN}
\date{}
\maketitle

\begin{abstract}
We introduce various cohomological obstructions for smooth integral varieties over $p$-adic function fields.
We show that the unramified obstruction is the finest one among obstructions arising from arithmetic dualities.
We also construct an explicit example whose unramified obstruction gives a proper obstruction
while Manin obstruction does not.
Finally, we compare the unramified obstruction with the descent obstruction.
\end{abstract}


\section{Introduction}
In the past decade,
there has been increasing interest in the arithmetic of varieties
defined over higher dimensional semi-global function fields.
For instance, the following works are particularly relevant to this article.
Harari, Scheiderer, Szamuely, Tian, Linh and Zhang \cites{HSS15, HS16, Tia21WA, Lin24, Zha24} studied cohomological obstructions
to the local-global principle, weak approximation and strong approximation for reductive groups or their homogenous spaces over $p$-adic function fields,
and Izquierdo \cites{Izq15, Izq17} investigated the local-global principle for function fields over higher local fields.
Also, \cites{HSS15, Tia24} showed that the degree three unramified cohomology group gives
the only obstruction to both the local-global principle and weak approximation.

Another origin of the present article is the comparison of various obstructions over number fields.
The works \cites{Har02, Sto07, Dem09, Sko09, Cao18AF, CDX19} give
explicit comparisons of obstructions arising from Brauer groups and various descents.
These developments suggest a systematic understanding of the obstruction theory over $p$-adic function fields.
Besides,
Colliot-Th\'el\`ene \cite{CT03}*{p.~174} conjectured that weak approximation with Brauer--Manin obstruction is the only one
for rationally connected smooth varieties over number fields.
This conjecture also motivates us to explore the correct analogue of the usual Brauer--Manin obstruction over $p$-adic function fields.

Now we review the main results of the article.
Let $K$ be the function field of a smooth projective geometrically integral curve $C$ over a $p$-adic field.
For a smooth integral $K$-variety $X$, let $X(\BA)$ be the set of adelic points on it.
Consider the \'etale sheaf $\QZ(2)\ce \drl\mu_n^{\ots2}$ on $X$,
see \bref{para: motivic complexes} below.
There is a well-defined Manin pairing (see (\ref{para: obstructions from ADT}\ref{para: obstructions from ADT 1}))
\[
X(\BA)\times \het^3(X,\QZ(2))\to \QZ,
\]
which defines an obstruction $X(\BA)^{\het^3}$ as its left kernel.
We have used $\het^3(X,\QZ(2))$ instead of $\Br(X)$
for cohomological dimension reasons.
In the mean time, 
the unramified cohomology group
\[
\hnr^3(X,\QZ(2))\ce H^0_{\Zar}(X,\CH^3_{\et}(\QZ(2)))
\]
also defines an obstruction $X(\BA)^{\hnr^3}$ as the left kernel of the pairing 
(see (\ref{para: unramified cohomology}\ref{para: unramified cohomology 3}))
\[
X(\BA)\times \hnr^3(X,\QZ(2))\to \QZ.
\]
On the other hand,
let $\BF:\Sch_K\to \Set$ be any contravariant functor from the category of $K$-schemes to that of sets.
For any $\al\in \BF(X)$, we obtain an associated subset $X(\BA)^{\al}\subset X(\BA)$
containing $X(K)$ (see \bref{para: obstructions from functors} below).
By taking $\BF=\het^i(-,M)$ for some commutative linear algebraic $K$-group $M$,
we arrive at the main result of \Cref{section: BM and nr} and \Cref{section: BM and desc}.

\begin{thm}\label{thm: introduction 1}
Let $X$ be a smooth integral $K$-variety and
let $M$ be a commutative linear algebraic $K$-group.
\benumr
\itm\label{thm: introduction 11}
The unramified obstruction is finer than the Manin obstruction,
i.e., $X(\BA)^{\hnr^3}\subset X(\BA)^{\het^3}$.

\itm\label{thm: introduction 12}
For any $\al\in \het^i(X,M)$ and any $i\ge 1$,
we have $X(\BA)^{\hnr^3}\subset X(\BA)^{\al}$.

\itm\label{thm: introduction 13}
Suppose that $M$ is finite.
For any $\al\in \hnr^i(X,M)\ce H^0_{\Zar}(X,\CH^i_{\et}(M))$ and any $i\ge 0$,
we have $X(\BA)^{\hnr^3}\subset X(\BA)^{\al}$.
\eenum
\end{thm}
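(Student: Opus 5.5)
We reduce each of the three inclusions to a single mechanism. An adelic point $(x_v)$ lies in $X(\BA)^{\al}$ as soon as the local evaluations $\al(x_v)$ come from a global class. We then show that the relevant pairings factor through the unramified part. Throughout, $v$ ranges over the closed points of $C$, $K_v$ is the completion, and we use the Poitou--Tate type exact sequences for $K$ (the arithmetic dualities of \cite{HSS15}) for finite modules and tori. We also use that $\QZ(2)$-cohomology of $K_v$ vanishes in degree $\ge 4$, with $H^3(K_v,\QZ(2))\cong\QZ$.

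For \ref{thm: introduction 11}, the plan is as follows. Every element $\be\in\het^3(X,\QZ(2))$ has image in $H^0_{\Zar}(X,\CH^3_{\et}(\QZ(2)))=\hnr^3(X,\QZ(2))$ under the edge map of the Bloch--Ogus (coniveau) spectral sequence. For any $K_v$-point $x_v$, the evaluation $\be(x_v)\in H^3(K_v,\QZ(2))$ depends only on this image. To see this, pull back along $x_v:\Spec K_v\to X$, which factors through a local ring $\CO_{X,x}$ where $x$ is the image point. By Gersten/purity for the henselian-type local ring, $H^3(\CO_{X,x},\QZ(2))\to H^3(K(X),\QZ(2))$ is injective on the relevant part, and evaluation at $x_v$ factors through the unramified class. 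Hence the Manin pairing with $\be$ coincides with the unramified pairing of its image, and $X(\BA)^{\hnr^3}\subset X(\BA)^{\het^3}$. The main subtle step is checking that the unramified pairing from (\ref{para: unramified cohomology}\ref{para: unramified cohomology 3}) is defined via exactly this specialization. I would verify this compatibility on a Zariski cover where $\be$ and its unramified image agree.

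For \ref{thm: introduction 12}, take $\al\in\het^i(X,M)$ and an adelic point orthogonal to $\hnr^3$. By \ref{thm: introduction 11} it is orthogonal to $\het^3(X,\QZ(2))$. We need the family $(\al(x_v))\in\prod_v H^i(K_v,M)$ to lie in the image of $H^i(K,M)$. Poitou--Tate for $M$ over $K$ says this happens iff the family is orthogonal to $\Sha$-type classes, namely the kernel of $H^{3-i}(K,\hat M)\to\prod_v$ with the appropriate twist. Here $\hat M=\Hom(M,\QZ(2))$ or its complex analogue for tori, and the pairing lands in $H^3(K_v,\QZ(2))$. Given such a global class $\ga$, the cup product $\al\cup\pi^*\ga\in\het^3(X,\QZ(2))$, with $\pi:X\to\Spec K$, satisfies
\[
\sum_v\langle x_v,\al\cup\pi^*\ga\rangle=\sum_v\langle \al(x_v),\ga_v\rangle .
\]
The left side vanishes by orthogonality, so the Poitou--Tate obstruction dies and the family is global. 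For $i$ large the local groups vanish and the statement is trivial. Linear algebraic $M$ is handled by dévissage into a finite group and a torus-by-unipotent part, the unipotent part having trivial cohomology in positive degree. The obstacle is having the exact sequence for general commutative linear $M$ in all degrees. I would cite \cite{HSS15} for tori and finite modules and reduce $M$ to $M^{\circ}$ and $\pi_0(M)$.

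For \ref{thm: introduction 13}, with $M$ finite and $\al\in\hnr^i(X,M)$, we run the same argument with the cup product taken in unramified cohomology. The product of $\al$ with a constant class $\pi^*\ga$ lands in $\hnr^3(X,\QZ(2))$, because constant classes are unramified and $\CH^*_{\et}$ is a sheaf of graded rings. Orthogonality of $(x_v)$ to $\hnr^3$ then directly kills the Poitou--Tate obstruction. One must check that evaluation of unramified classes at $K_v$-points is defined, for instance via the localization at the image point as in part \ref{thm: introduction 11}. The case $i=0$ is immediate since $\hnr^0(X,M)=M(K)$ for $X$ integral and smooth, so its evaluations are constant and already global.
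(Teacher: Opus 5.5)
Your argument for (i) is fine and uses the paper's mechanism. Since $x_v^*\beta$ factors through $\het^3(\CO_{X,P_v},\LB)$, where $\beta$ restricts to the image of $\phi_X(\beta)$, the Manin pairing of $\beta$ equals the unramified pairing of $\phi_X(\beta)$. Part (iii) is also the intended argument: cup with constant classes from $H^{3-i}(K,F')$, land in $\hnr^3(X,\LB)$, and apply Poitou--Tate for finite modules. One small point there: $\hnr^0(X,M)=M(K)$ requires $X$ to be geometrically connected, but the uniform argument with $H^3(K,F')$ covers $i=0$ anyway.

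The gap is in (ii), as soon as $M$ has a torus part. You weaken the hypothesis to orthogonality against $\het^3(X,\LB)$ and assert that the dual of a torus pairs into $\QZ(2)$, so that $\al\cup\pi^*\gamma\in\het^3(X,\LB)$. The duality actually available for $M$ of multiplicative type is different. One embeds $0\to M\to T_1\to T_2\to 0$ and takes the dual complex $M'=[T_2'\to T_1']$. The resulting pairing is $\Z(2)$-valued: $H^i(K_v,M)\times H^{2-i}(K_v,M')\to H^4(K_v,\Z(2))\simeq\QZ$, with obstruction group $H^{2-i}(K,M')^D$. For $i=2$ this sequence is only exact on $\BBP^2(K,M)_{\mrm{tors}}$, so you also need $\het^2(X,M)$ to be torsion, which you do not address.

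Consequently the class you can actually form on $X$ is $\al\cup\beta'_X\in\het^4(X,\Z(2))$, not an element of $\het^3(X,\LB)$. The identification $\dif_v$ of the two theories exists only over the fields $K_v$, and $\dif_X:\het^3(X,\LB)\to\het^4(X,\Z(2))$ is not surjective in general. A torsion dual such as the character module $\hat T$ tensored with $\QZ(1)$ does not repair this by itself. Its $H^0$ over $K$ is only $T'(K)_{\mrm{tors}}$, whereas for $i=2$ the obstruction group is $T'(K)^D$.

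The missing ingredient is the stronger form of (i) that the paper proves. Using $\psi_X:\het^4(X,\Z(2))\to\hnr^3(X,\LB)$ and $\psi_v=\dif_v\ui$, the pairing of $(x_v)$ with $\gamma\in\het^4(X,\Z(2))$ equals $\lip (x_v),\psi_X(\gamma)\rip_{\nr}$. Hence $X(\BA)^{\hnr^3}\subset X(\BA)^{\het^4}$ (in fact equality), and then $\sum_v j'_v\bigl((x_v^*\al)\cup\beta'_v\bigr)=\sum_v j'_v\circ x_v^{\#}(\al\cup\beta'_X)=0$. This is why the paper obtains (ii) for $X(\BA)^{\hnr^3}$, while for finite coefficients it gets $X(\BA)^{\het^3}$, and the latter can be strictly larger.

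If you want to keep the $\het^3$ route, you would have to show that $\al\cup\beta'_X$ is torsion. It would then lift along $\dif_X$, because $\het^4(X,\Q(2))$ is a $\Q$-vector space. That requires torsion of $H^1(K,M')$ for $i=1$ and of $\het^i(X,M)$ for $i\ge 2$, and none of this is in your sketch.

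Your dévissage along $0\to M^{\circ}\to M\to\pi_0(M)\to 0$ also does not work as stated. After subtracting a global lift of the $\pi_0(M)$-part, the leftover local classes in $H^i(K_v,M^{\circ})$ are not evaluations of any class on $X$, so the torus case cannot be applied to them. The paper instead applies the Poitou--Tate sequence for groups of multiplicative type directly. It passes from commutative linear $M$ to $M/\rad^u(M)$ using only that unipotent groups have trivial $H^{\ge 1}$ over $K$ and over the $K_v$; this holds over fields but not over $X$, and only these fields enter the definition of $X(\BA)^{\al}$.
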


The obstructions $X(\BA)^{\hnr^3}$ and $X(\BA)^{\het^3}$ are related by motivic complexes.
More precisely,
the short exact sequence of complexes
\[
0\to \Z(2)\to \Q(2)\to \QZ(2)\to 0
\]
yields a map $\het^3(X,\QZ(2))\to \het^4(X,\Z(2))$.
Thus the functoriality of the Manin pairing induces
a map $X(\BA)^{\het^4}\to X(\BA)^{\het^3}$ of sets.
In applying truncations as \cite{Kah12}*{\S2},
we get a map $\het^4(X,\Z(2))\to \hnr^3(X,\QZ(2))$
which enables us to define a map $X(\BA)^{\hnr^3}\to X(\BA)^{\het^4}$.
Now we may use either cohomological tools or arithmetic dualities
to conclude \ref{thm: introduction 11}.

For \ref{thm: introduction 12} and \ref{thm: introduction 13},
we can pass freely from commutative linear groups
to groups of multiplicative type because of $\tz(K)=0$.
Then a piece of a Poitou--Tate sequence for groups of multiplicative type yields the desired comparisons.
As a particular case of \ref{thm: introduction 12}, we see that the unramified obstruction is finer
than the commutative descent obstruction.

There are two natural questions after \cref{thm: introduction 1}.
Namely,
we ask for a $K$-variety $X$ such that $X(\BA)^{\hnr^3}\ne X(\BA)^{\het^3}$ and
the comparison of $X(\BA)^{\hnr^3}$ with the descent obstruction.
\Cref{section: BM and nr for versal torsors} aims to answer the first question via the following explicit example.

\begin{thm}[see \Cref{section: BM and nr for versal torsors}]
Let $G$ be an absolutely almost-simple simply connected $K$-group and
let $G\to \SL_n$ be a $K$-embedding for some $n\ge 1$.
Let $Y=\BBA^{n}_K\setminus Z$ where $Z\subset \BBA^n_K$ is a closed subvariety with $\codim(Z,\BBA^{n}_K)\ge 3$
such that $G$ acts on $Y$ freely.
Let $X\ce Y/G$.
Suppose $H^1(K_v,G)\ne 1$ and $\ker\s{R}_v=1$ for infinitely many $v\in C\uun$,
where $\s{R}$ is the Rost invariant of $G$ and $\s{R}_v:H^1(K_v,G)\to H^3(K_v,\QZ(2))$ is the map for each place $v\in C\uun$
$($see \textup{\bref{para: Rost invariant and an example from Hu}}$)$.
Then we have
\[
X(\BA)^{\hnr^3}\subsetneq X(\BA)^{\het^3}=X(\BA).
\]
\end{thm}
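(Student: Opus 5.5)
We will prove the two assertions $X(\BA)^{\het^3}=X(\BA)$ and $X(\BA)^{\hnr^3}\ne X(\BA)$ separately; the inclusion $X(\BA)^{\hnr^3}\subset X(\BA)^{\het^3}$ is part \ref{thm: introduction 11} of \cref{thm: introduction 1}.

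For the equality $X(\BA)^{\het^3}=X(\BA)$, the plan is to compute $\het^3(X,\QZ(2))$ and see that it is constant. Consider the $G$-torsor $Y\to X$. Since $Y=\BBA^n_K\setminus Z$ with $\codim Z\ge 3$, purity gives $\het^i(Y,\QZ(2))\cong \het^i(\BBA^n_K,\QZ(2))\cong H^i(K,\QZ(2))$ for $i\le 4$ (homotopy invariance plus the Gysin sequence with a codimension $\ge 3$ support). Next we run the Hochschild--Serre type (Leray) spectral sequence for $Y\to X$, or equivalently for $Y\times^G EG$. Because $G$ is simply connected and absolutely almost simple, its low degree cohomology is concentrated in degree $3$: $\bar G$ has $H^1=H^2=0$, and $H^3(\bar G,\QZ(2))\cong\QZ$. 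This should give an exact sequence
\[
0\to H^3(K,\QZ(2))\to \het^3(X,\QZ(2))\to \QZ\cdot\s{R}\to \dots,
\]
in which $\het^3(X,\QZ(2))$ is generated modulo constants by the Rost invariant class of the torsor $Y\to X$ (and its multiples). Constant classes pair trivially with adelic points by the reciprocity law for $H^3(K,\QZ(2))$ (the Poitou--Tate sequence). For the class $\s{R}(Y)$, we evaluate at an adelic point $(x_v)$: the value is $\sum_v \mathrm{inv}_v\,\s{R}_v([Y_{x_v}])$. Here $H^3(K_v,\QZ(2))\cong H^1(\kappa(v)\text{-curve})$-type groups, and the sum lands in $H^3$ of the curve. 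The key claim is that the local pairing is constant in $x_v$ with values killed by the global sum. I would try to show this using the fact that $\het^3(X,\QZ(2))$ is not unramified, so its pairing factors through the degree-$4$ class in $\het^4(X,\Z(2))$, which in turn vanishes on adelic points. This is the step I expect to be subtle. An alternative fallback is to show directly that $\het^3(X,\QZ(2))\to\het^4(X,\Z(2))$ kills the Rost class, because the Rost invariant lifts to a $\Q(2)$ class via $c_2$ of the representation $G\to\SL_n$, which is pulled back from $\BBA^n$ and hence trivial.

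For the strict inclusion $X(\BA)^{\hnr^3}\subsetneq X(\BA)$, we note that $X$ is a versal-type classifying space: for every field $L\supset K$, $X(L)\to H^1(L,G)$ is surjective, by Hilbert 90 for $\SL_n$ and the $\SL_n$-equivariant open subset of affine space. Using the assumption that there are infinitely many $v$ with $H^1(K_v,G)\ne 1$ and $\ker\s{R}_v=1$, the Rost invariant takes nonzero values in $H^3(K_v,\QZ(2))$ at those places. We then choose an adelic point $(x_v)$ with $x_v$ lifting a torsor $t_v$ with $\s{R}_v(t_v)\ne0$ at exactly one such place $v_0$, and with $x_v$ lifting the trivial torsor elsewhere. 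Since $X(K)$ is dense in $X(K_v)$ this is possible. We then pair $(x_v)$ with the unramified class $\alpha\in\hnr^3(X,\QZ(2))$ which should be the Rost invariant of the generic torsor. This class is unramified because it extends over $X$ (the classifying-space property, following Merkurjev's computation of degree-$3$ invariants, that invariants are unramified). The pairing equals $\mathrm{inv}_{v_0}\s{R}_{v_0}(t_{v_0})\ne 0$ under the isomorphism $H^3(K_{v_0},\QZ(2))\cong\QZ$-summand that appears in the sum map to $\QZ$. To make this land in $\QZ$ nontrivially, we need the local–global summation map $\bigoplus_v H^3(K_v,\QZ(2))\to\QZ$ to be nonzero on that element. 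If it is not injective on a single summand, we instead vary $t_v$ at two places, using $\ker \s{R}_v=1$ to control the values. This choice of adelic point, and the proof that the unramified Rost class really pairs nontrivially, is the second main obstacle.
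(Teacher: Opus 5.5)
Your second half is essentially sound, but the first half, the equality $X(\BA)^{\het^3}=X(\BA)$, rests on a false computation.

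You assert that $\het^3(X,\LB)$ is generated modulo $H^3(K,\LB)$ by a Rost-type class coming from $\het^3(\ol{G},\LB)\simeq\QZ$. In fact $\het^3(\olX,\LB)=0$. The degree-$3$ class of $\ol{G}$ does not survive in degree $3$ but transgresses to degree $4$ (topologically, $BG$ is $3$-connected and $H^3(G)$ hits the generator of $H^4(BG)$).

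The paper proves this with the \v{C}ech spectral sequence of $\olY\to\olX$, using $\olY\times_{\olX}\cdots\times_{\olX}\olY\simeq\olY\times\ol{G}^{i}$:
\begin{itemize}
\item one has $E_2^{i,0}=0$ for $i\ge 1$;
\item by K\"unneth, together with $\het^1(\ol{G},\LB)=\het^2(\ol{G},\LB)=0$ and $\het^j(\olY,\LB)=0$ for $1\le j\le 3$, one gets $E_2^{i,1}=E_2^{i,2}=0$;
\item hence $\het^3(\olX,\LB)=E_2^{0,3}\subset\het^3(\olY,\LB)=0$.
\end{itemize}
Hochschild--Serre then gives $\het^3(X,\LB)\simeq H^3(K,\LB)$. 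The reciprocity sequence $H^3(K,\LB)\to\bigoplus_v H^3(K_v,\LB)\to\QZ$ finishes the argument. The Rost class lives in $\het^4(X,\Z(2))$ and, via $\psi_X$, in $\hnr^3(X,\LB)$, but not in the image of $\het^3(X,\LB)$. That is precisely why the two obstructions differ.

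Your suggested ways around this cannot work, and they contradict your own second half.
\begin{itemize}
\item For $\al\in\het^3(X,\LB)$, pairing an adelic point with $\al$ is the same as pairing it with $\dif_X(\al)\in\het^4(X,\Z(2))$, hence with $\phi_X(\al)=\psi_X(\dif_X\al)\in\hnr^3(X,\LB)$. So if the unramified Rost class were $\phi_X(\al)$, even modulo constants, your second-half construction would give an adelic point not orthogonal to $\al$, i.e.\ $X(\BA)^{\het^3}\ne X(\BA)$.
\item Classes in $\het^4(X,\Z(2))$ do not vanish on adelic points here, since $X(\BA)^{\het^4}=X(\BA)^{\hnr^3}$ is exactly the proper subset you are after.
\item In your fallback, a class killed by $\dif_X$ has $\phi_X(\al)=0$, so it vanishes in $H^3(K(X),\LB)$. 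It therefore cannot be the Rost invariant of the generic torsor, which is nonzero because $\Phi$ is injective.
\end{itemize}

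Your second half is correct and close to the paper. The paper instead starts from a point of $X(\BA)^{\hnr^3}$, which is nonempty since $X(K)\neq\emptyset$, and changes one component at a good place $w$. Your variant also works because $\s{R}$ is normalized, and it needs only one good place. It does need components given by trivial torsors away from $v_0$, for instance coming from a fixed $K$-point of $Y$ so the family is adelic; no density of $X(K)$ in $X(K_v)$ is needed. The obstacle you flag disappears, because $j_v:H^3(K_v,\LB)\to\QZ$ is an isomorphism. What you must cite is $\hnr^3(X,\LB)\simeq\Inv^3(G,\LB)$ (Merkurjev, GMS). You also need the compatibility $\tilde{x}^{\#}(\beta)=\Psi(\beta)_L([Y_x])$ for arbitrary $x\in X(L)$, including points not lying over the generic point; the paper proves this as a lemma using Blinstein--Merkurjev.
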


The first step is proving $\het^3(X,\QZ(2))\simeq H^3(K,\QZ(2))$
by the \v{C}ech spectral sequence and the Hochschild--Serre spectral sequence.
We immediately deduce $X(\BA)^{\het^3}=X(\BA)$.
The next step is based on a series of works \cites{Mer16red, Mer16ss, Mer17} of Merkurjev
which enables us to identify $\hnr^3(X,\QZ(2))$ with the group of cohomological invariants $\Inv^3(G,\QZ(2))$.
Subsequently, we conclude $X(\BA)^{\hnr^3}\subsetneq X(\BA)$ by modifying a local evaluation.

\vem
The descent obstruction is central in obstruction theory (see for instance \cites{Sko99, Sko01}).
In the context of number fields,
the descent obstruction is unconditionally finer than the Brauer--Manin obstruction,
while the latter is finer than the connected descent obstruction (see \cite{Har02}*{Th\'eor\`eme 2}).
Over $p$-adic function fields,
we shall compare the unramified obstruction with the descent obstruction.
As of today, the lack of a suitable generalization of the Kottwitz exact sequence (\cite{Bor98}*{Theorem 5.16})
limits further comparisons of these obstructions.

\begin{thm}\label{thm: intro descent vs hnr}
Let $G$ be a linear $K$-group and fix a $K$-embedding $G\to \SL_n$ for some $n\ge 2$. 
The following are equivalent.
\benumr
\itm\label{thm: intro descent vs hnr 1}
Let $X=\SL_n/G$. 
There is an identification of sets $X(\BA)^{\hnr^3}={\SL_n(\BA).X(K)}$.

\itm\label{thm: intro descent vs hnr 2}
Let $Z$ be any smooth integral variety over $K$.
For any $G$-torsor $f:Y\to Z$,
there is an inclusion $Z(\BA)^{\hnr^3}\subset Z(\BA)^f$.
\eenum
\end{thm}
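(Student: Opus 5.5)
The plan is to prove both implications through the $G$-torsor $\pi:\SL_n\to X=\SL_n/G$. First I compute its descent set, $X(\BA)^{\pi}=\SL_n(\BA).X(K)$. Then I reduce an arbitrary torsor $f:Y\to Z$ to $\pi$ by pulling back along a morphism to $X$, using the functoriality of both the descent obstruction and $X(\BA)^{\hnr^3}$ under morphisms (\bref{para: obstructions from functors}, \bref{para: unramified cohomology}).

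\emph{Step 1: $X(\BA)^{\pi}=\SL_n(\BA).X(K)$.} Recall that $X(\BA)^{\pi}$ is the union over $\sigma\in H^1(K,G)$ of the images $\pi^\sigma(Y^\sigma(\BA))$. Equivalently, it is the set of adelic points whose local torsor classes $([\pi^{-1}(x_v)])_v\in\prod_v H^1(K_v,G)$ come from one global class in $H^1(K,G)$.
\begin{itemize}
\item Since $H^1(K_v,\SL_n)=1$, the $\SL_n(K_v)$-orbits on $X(K_v)$ are exactly the fibres of $X(K_v)\to H^1(K_v,G)$, and this map is surjective.
\item Since $H^1(K,\SL_n)=1$, the map $X(K)\to H^1(K,G)$ is surjective.
\end{itemize}
Together these give the claimed equality.

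\emph{Step 2: $\SL_n(\BA).X(K)\subset X(\BA)^{\hnr^3}$ always holds.} We have $X(K)\subset X(\BA)^{\hnr^3}$. So it suffices to show that for $\al\in\hnr^3(X,\QZ(2))$ the local evaluation $\al(x_v)$ is unchanged under $x_v\mapsto gx_v$ with $g\in\SL_n(K_v)$.
\begin{itemize}
\item $\SL_n(K_v)$ is generated by elementary matrices, and each of these is joined to $1$ by an affine line over $K_v$.
\item The composite $\BBA^1_{K_v}\to X_{K_v}$, $t\mapsto e(t)x_v$, pulls $\al$ back to an element of $\hnr^3(\BBA^1_{K_v})=H^3(K_v,\QZ(2))$, which is constant.
\end{itemize}
Hence $\al(gx_v)=\al(x_v)$, as required.

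\emph{(ii)$\Rightarrow$(i).} Apply (ii) to $Z=X$ and $f=\pi$. This gives $X(\BA)^{\hnr^3}\subset X(\BA)^{\pi}$, which equals $\SL_n(\BA).X(K)$ by Step 1. Step 2 supplies the reverse inclusion.

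\emph{(i)$\Rightarrow$(ii).} Let $f:Y\to Z$ be a $G$-torsor.
\begin{itemize}
\item Put $E=Y\times^G\SL_n$. This is an $\SL_n$-torsor over $Z$, Zariski-locally trivial because $\SL_n$ is special.
\item Let $p:E\to Z$ be the projection. The pullback torsor $p^*Y$ carries a canonical reduction of the trivial $\SL_n$-torsor $p^*E$ to $G$. Hence there is a morphism $\psi:E\to X$ with $p^*Y\simeq\psi^*\pi$ as $G$-torsors.
\item Take $z\in Z(\BA)^{\hnr^3}$. Since $H^1(K_v,\SL_n)=1$, and $E$ is trivial over $\CO_v$ for almost all $v$ after spreading out, $z$ lifts to some $e\in E(\BA)$.
\end{itemize}
The key claim is that $p^*:\hnr^3(Z,\QZ(2))\to\hnr^3(E,\QZ(2))$ is an isomorphism. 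The reason is that $E$ is birational to $Z\times\SL_n$, hence to $Z\times\BBA^{N}$, and unramified cohomology is a birational invariant of smooth varieties satisfying $\hnr^3(Z\times\BBA^N)=\hnr^3(Z)$. Granting this, the evaluation formula $(p^*\al)(e)=\al(z)$ gives $e\in E(\BA)^{\hnr^3}$. The argument then closes as follows:
\begin{itemize}
\item By functoriality, $\psi(e)\in X(\BA)^{\hnr^3}$.
\item By (i) and Step 1, $\psi(e)\in X(\BA)^{\pi}$.
\item By functoriality of descent sets, $e\in E(\BA)^{\psi^*\pi}=E(\BA)^{p^*f}$.
\item Projecting by $p$, $z\in Z(\BA)^f$.
\end{itemize}

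I expect the main obstacle to be the comparison $\hnr^3(Z)\simeq\hnr^3(E)$ together with the compatibility of adelic evaluation along $p$. In particular, one must check that the lift $e$ can be chosen integral at almost all places, so that it is genuinely adelic. The first thing I would try for the integrality is to spread out $Z$, $E$ and the local trivializations over an open subset of $C$, and use Hensel's lemma for the smooth group scheme $\SL_n$ over $\CO_v$.
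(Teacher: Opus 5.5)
\textbf{Verdict: genuine gap.} The failing step is your key claim that $p^*:\hnr^3(Z,\LB)\to\hnr^3(E,\LB)$ is an isomorphism. The claim is true (it is the paper's Lemma~\ref{lem: unramified third cohomology of YZ are iso}, and it is the crux of (i)$\Rightarrow$(ii)), but the reason you give does not work. The group $\hnr^3(-,\LB)=H^0_{\Zar}(-,\CH^3_{\et}(\LB))$ is \emph{not} a birational invariant of smooth non-proper varieties. By \bref{eq: nr cohomology is E0n term} it consists of classes in $H^3$ of the function field that are unramified only at the codimension-one points of the given variety, so removing a divisor can enlarge it. The paper says this explicitly: $\hnr^3(\gm,\LB)/H^3(K,\LB)\simeq\Br(K)\neq 0$ although $\gm$ is rational. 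Section 5 likewise stresses that $\hnr^3(\SL_n,\LB)\simeq H^3(K,\LB)$ holds for reasons other than rationality. Your argument uses only that $E\to Z$ is a Zariski-locally trivial torsor under a rational group. It would therefore equally ``prove'' $\hnr^3(\gm,\LB)=H^3(K,\LB)$ for the trivial $\gm$-torsor over $\e K$, which is false.

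What is actually needed has two parts.
\begin{enumerate}
\item An input specific to $\SL_n$: $\hnr^3(\SL_{n,F},\LB)=H^3(F,\LB)$ for every field $F\supset K$. The paper takes this from EKLV (Lemma~\ref{cor: Inv3 of SLn is constant}). Your own Step 2 technique also gives it: run it over $L=F(\SL_n)$ at the generic point $\xi\in\SL_n(L)$, which is a product of elementary matrices over the field $L$. This shows the generic value of any class equals its value at $1$.
\item A passage from the generic fibre to $E$. The paper compares the purity sequences $0\to\hnr^3(W,\LB)\to H^3(K(W),\LB)\to\bigoplus_{w\in W\uun}H^2(\kappa_w,\LB(-1))$ for $W=Z$ and $W=E$, using $E_\eta\simeq\SL_{n,K(Z)}$. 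It then checks that $\Br(\kappa_z)\to\Br(\kappa_z(E_z))$ is injective for $z\in Z\uun$, since $E_z\simeq\SL_{n,\kappa_z}$ has a rational point. Alternatively, Zariski-local triviality of $E$ plus the unit section lets you argue locally on $Z$.
\end{enumerate}

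The rest of your argument is sound and close to the paper's.
\begin{itemize}
\item Your $E$ and $\psi$ are the paper's ${_n}Y$ and its projection to $X$.
\item Lifting $z$ to $E(\BA)$ is handled correctly; the paper leaves this implicit.
\item Your closing step uses functoriality, $E(\BA)^{\psi^*[\pi]}=E(\BA)^{p^*[Y]}$, where the paper identifies ${_n}Y_x\simeq{_\sigma}Y$ explicitly. Both work.
\item Step 2 is a genuinely different and valid proof of $\SL_n(\BA).X(K)\subset X(\BA)^{\hnr^3}$. The paper pulls back along $s\mapsto s.x$ and uses $\hnr^3(\SL_n,\LB)\simeq H^3(K,\LB)$. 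You only need $\hnr^3(\BBA^1_{K_v},\LB)=H^3(K_v,\LB)$ and generation of $\SL_n(K_v)$ by elementary matrices, which is more elementary.
\end{itemize}

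One minor point in Step 1. Writing $x_v=g_vx$ place by place does not show that $(g_v)\in\SL_n(\BA)$. Use instead the union-of-twists description you recalled, as the paper does. Each twist of $\SL_n\to X$ is a trivial $\SL_n$-torsor. A $K$-isomorphism with $\SL_n$ then identifies its image on adelic points with $\SL_n(\BA).x$ for some $x\in X(K)$ with $\partial(x)=[\sigma]$.
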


The proof is somewhat standard.
Consider the contracted product ${_n}Y\ce \SL_n\times^GY$ and the induced $\SL_n$-torsor ${_n}Y\to Z$.
Subsequently, as soon as we proved $\hnr^3(Z,\QZ(2))\simeq \hnr^3({_n}Y,\QZ(2))$,
any element $z\in Z(\BA)^{\hnr^3}$ can be written as a local family in some twist of $Y$, i.e., $Z(\BA)^{\hnr^3}\subset Z(\BA)^f$.
Conversely, taking $Z=X$ and $Y=\SL_n$ implies \ref{thm: intro descent vs hnr 1}.

\begin{para}[Organization of the present article]
\Cref{section: notation and conventions} is devoted to the constructions of the Manin obstruction, the unramified obstruction and the descent obstruction
via functors and arithmetic dualities.
In Sections \ref{section: BM and nr} and \ref{section: BM and desc}, we give several comparisons of these obstructions.
Subsequently in \Cref{section: BM and nr for versal torsors}, we provide an explicit example whose Manin obstruction coincides with the whole set of adelic points
while the unramified obstruction does give a proper obstruction.
Finally in \Cref{section: unramified obs with general descent}, we compare the unramified obstruction with
the descent obstruction.
\end{para}

\begin{para}
[Acknowledgement]
The author would like to thank Yang CAO and Yong HU for helpful discussions and valuable comments.
The author is partially supported by the grant of National Natural Science Foundation of China (no. 12401014).
\end{para}

\section{Notation and conventions}\label{section: notation and conventions}
\begin{para}
[Function fields]
Let $k$ be a $p$-adic field, i.e., a finite extension of $\Qp$.
Let $C$ be a smooth projective geometrically integral curve over $k$
with function field $K$.
We write $C\uun$ for the set of closed points on $C$.
For any $v\in C\uun$, the local ring $\CO_{C,v}$ is a discrete valuation ring
(because it is a $1$-dimensional regular local ring).
Thus we write $K_v$ for the completion of $K$ \wrt any $v\in C\uun$.
\end{para}

\begin{para}
[Adelic points]
For a $K$-variety $X$ (i.e., a separated $K$-scheme of finite type),
we denote by $X(\BA)$ the set of adelic points on $X$ arising from $C$.
More precisely, we put
\[
X(\BA)\ce \drl_{U\subset C_0} \Big(\tstpl_{v\notin U}X(K_v)\times \tstpl_{v\in U}\CX(\CO_v)\Big),
\]
where $C_0\subset C$ is a sufficiently small non-empty open subset such that there exists
a smooth separated integral $C_0$-scheme $\CX$ satisfying $X\simeq \CX\times_{C_0}K$.
The set $X(\BA)$ is independent of the choice of $C_0$ and $\CX$ by a limit argument
\cite{EGAIV3}*{\S8}.
Finally, the diagonal map $X(K)\to \prod_{v\in C\uun}X(K_v)$ factors through $X(\BA)$.
\end{para}

\begin{para}
[Motivic complexes]\label{para: motivic complexes}
Let $X$ be a $K$-variety.
For each $i\ge 0$, let $z^i(X_{\tau},\bullet)$ be Bloch's cycle complex defined in \cite{Blo86}*{pp.~267-268}.
Let $\Z_X(i)_{\tau}\ce z^i(X_{\tau},\bullet)[-2i]$ be the motivic complex of $\tau$-sheaves
for $\tau\in \{\Zar, \et\}$ obtained by shifting $z^i(X_{\tau},\bullet)$.
For any abelian group $A$,
put $A(i)_{\tau}\ce A\ots\Z(i)_{\tau}$.
According to \cite{GL01}*{Theorem 1.5}, there is a quasi-isomorphism $\Z/n\Z(i)_{\et}\simeq \mu_n^{\ots i}$ of complexes.
Thus we may identify $\QZ(i)_{\et}$ with $\drl_n \mu_n^{\ots i}$.
In the sequel, we write
\[
\LB\ce \QZ(2)_{\et}
\qand
\LB(-1)\ce \QZ(1)_{\et}.
\]
If there is no risk of confusion, we write $\Z(i)$ for $\Z(i)_{\et}$.
Finally, for any field $F$ of $\tz(F)=0$, various Kummer sequences $0\to \mu_n\to \gm\to \gm\to 0$ 
induce an isomorphism
\beq\label{eq: Kummer sequence H2 and Br}
H^2(F,\LB(-1))\simeq \Br(F).
\eeq
\end{para}

\begin{para}
[Obstructions from functors]\label{para: obstructions from functors}
Let $X$ be any $K$-variety.
Let $\BF:\Sch_K\to \Set$ be a contravariant functor from the category of $K$-schemes to that of sets.
For any commutative $K$-algebra $R$ and any $\al\in \BF(X)$,
define
\[
\ev_{\al}:X(R)\to \BF(R),\ x\mt x^*(\al),
\]
where $x^*:\BF(X)\to \BF(R)$ is the map induced by $x\in X(R)$.
Consider the \cmdm of sets
\[
\xm{
X(K)\ar[r]\ar[d]_-{\ev_{\al}} & X(\BA)\ar[d]^-{\ev_{\al}} \\
\BF(K)\ar[r]^-{\Delta} & \prod\limits_{v\in C\uun}\BF(K_v).
}
\]
We put
\[
X(\BA)^{\al}\ce \{x\in X(\BA)\zc \ev_{\al}(x)\in \Im\Delta\}
\qand
X(\BA)^{\BF}=X(\BA)^{\BF(X)}\ce \tst\bigcap\limits_{\al\in \BF(X)}X(\BA)^{\al}.
\]
The commutativity of the diagram yields $X(K)\subset X(\BA)^{\al}$ and $X(K)\subset X(\BA)^{\BF}$.
Due to \cite{Poo17}*{Corollary 8.1.9},
the assignment $X\mt X(\BA)^{\BF}$ is functorial in $X$.
\end{para}

\begin{para}
[Descent obstructions]
Let $G$ be a linear $K$-group.
Let $[Y]\in H^1(X,G)$ be the class of a $G$-torsor $f:Y\to X$.
We define a descent obstruction to the local-global principle
\[
X(\BA)^f\ce \tst\bigcup\limits_{[\sg]\in H^1(K,G)}{_{\sg}}f({_{\sg}}Y(\BA)).
\]
By a twisting argument, we see that
\[
(x_v)\in X(\BA)^f
\iff
(x_v^*([Y]))\in \Im(H^1(K,G)\to \tstpl_{v\in C\uun}H^1(K_v,G)), 
\]
i.e., $X(\BA)^f$ coincides with the set $X(\BA)^{[Y]}$ defined in \bref{para: obstructions from functors}.
\end{para}

Another typical obstruction is constructed via arithmetic dualities and local invariants.

\begin{para}
[Local invariants]
For any $v\in C\uun$,
the exact sequence $0\to \Z(2)\to \Q(2)\to \LB\to 0$ induces
isomorphisms of abelian groups
\[
\xm{
H^4(K_v,\Z(2)) & H^3(K_v,\LB)\ar[l]_-{\dif_v}\ar[r]^-{j_v} & \QZ
}
\]
by \cite{HS16}*{Lemma 2.1} and
\cite{Mil06}*{Theorem 2.17} or \cite{Kat80}*{Theorem III}, respectively.
By abuse of language, both the isomorphisms
\[
j_v:H^3(K_v,\LB)\simeq \QZ
\qand
j'_v\ce j_v\circ\dif_v\ui:H^4(K_v,\Z(2))\simeq \QZ
\]
are called \emph{local invariants}.
\end{para}

\begin{para}
[Unramified cohomology groups]\label{para: unramified cohomology}
Let $X$ be a smooth integral $K$-variety with function field $K(X)$.
Let $j\ge 1$ be any integer.
\benumr
\itm
Let $F$ be a finite commutative linear group over $K$.
We put
\[
\hnr^i(X,F)\ce H^0_{\Zar}(X,\CH^i_{\et}(F)),
\]
where $\CH^i_{\et}(F)$ is the Zariski sheaf associated to the presheaf
$U\mt \het^i(U,F)$ on $X$.
So we obtain a contravariant functor $\hnr^i(-,F)$.

\itm
In practice, one frequently uses the twisted sheaf $\mu_n^{\ots j}$ instead of $\mu_n$ itself.
To this end, we also introduce
\[
\hnr^i(X,\mu_n^{\ots j})\ce H^0_{\Zar}(X,\CH^i_{\et}(\mu_n^{\ots j})),
\]
where $\CH^i_{\et}(\mu_n^{\ots j})$ is the Zariski sheaf associated to the presheaf
$U\mt \het^i(U,\mu_n^{\ots j})$.
The group $\hnr^i(X,\mu_n^{\ots j})$ has an alternative description by the following injective map
(by \cite{CT95}*{Theorem 3.8.1})
\[
i_P:\het^i(\CO_{X,P},\mu_n^{\ots j})\to H^i(K(X),\mu_n^{\ots j})
\]
for any $P\in X$.
Thus the following intersections are well-defined
\[
\tst\bigcap\limits_{P\in X\uun}\het^i(\CO_{X,P},\mu_n^{\ots j}),\
\tst\bigcap\limits_{P\in X}\het^i(\CO_{X,P},\mu_n^{\ots j})\subset H^i(K(X),\mu_n^{\ots j}).
\]
Due to \cite{CT95}*{Theorem 4.1.1~a)-c)}, we get 
\beq\label{eq: nr cohomology is E0n term}
\tst\bigcap\limits_{P\in X\uun}\het^i(\CO_{X,P},\mu_n^{\ots j})
\simeq
\hnr^i(X,\mu_n^{\ots j}) 
\simeq
\tst\bigcap\limits_{P\in X}\het^i(\CO_{X,P},\mu_n^{\ots j})
\eeq
The group $\hnr^3(X,\LB)$ is generally strictly larger than
the \emph{usual unramified cohomology group} defined as $\hnr^3(K(X)/K,\LB)\ce \bigcap \ker\dif_v$
with 
$\dif_v$ the residue map running through \emph{all} discrete valuations $v$ of $K(X)$ over $K$.
For instance,
we have
\[
\hnr^3(K(\gm)/K,\LB)/H^3(K,\LB)=0
\qand
\hnr^3(\gm,\LB)/H^3(K,\LB)\simeq \Br(K),
\]
where the former holds since $\gm$ is rational and the latter is a consequence of \cite{CT95}*{Th\'eor\`eme 3.4.1}.
If $X=\e L$ for some field $L/K$,
then $\hnr^i(X,\mu_n^{\ots j})=H^i(L,\mu_n^{\ots j})$ is the usual Galois cohomology group. 

\itm\label{para: unramified cohomology 3}
Now take $j=2$ and consider cohomologies with coefficients $\LB$.
Take any $(x_v)\in X(\BA)$ and any $\beta\in \hnr^3(X,\LB)$.
For each $v\in C\uun$, let $\iota_v:\hnr^3(X,\LB)\to \het^3(\CO_{X,P_v},\LB)$
be the inclusion map, where $P_v$ is the image of $x_v$.
Note that $x_v\in X(K_v)$ factorizes as
\[
\xm{
\e K_v\ar[r]^-{\tilde{x}_v} & \e \CO_{X,P_v}\ar[r]^-{\tau_v} & X.
}
\]
Similarly to \cite{HSS15}*{p.~2777 (16)}, we obtain the following pairing
\beq\label{eq: BM pairing for Hnr}
\lip-,-\rip_{\nr}:X(\BA)\times \hnr^3(X,\LB)\to \QZ,\
((x_v),\beta)\mt \tstsl_{v\in C\uun}j_v\circ \tilde{x}_v^*(\iota_v(\beta)),
\eeq
where $\tilde{x}_v^*:\het^3(\CO_{X,P_v},\LB)\to H^3(K_v,\LB)$ is induced by $\tilde{x}_v$.
We show that \bref{eq: BM pairing for Hnr} is a finite sum.
Choose a smooth integral model $\CX\to C_0$ over a sufficiently small non-empty open subset $C_0\subset C$.
In viewing $\beta\in \hnr^3(X,\LB)$ as an element of $H^3(k(\CX),\LB)$,
we conclude $\beta\in \het^3(\CO_{\CX,P},\LB)$ for all $P\in \CX^{(1)}$
except finitely many $\seq{P}{n}$
(because $\beta$ is unramified at all but finitely many codimension $1$ points).
Since $\beta$ is unramified over the generic fibre $X$,
by shrinking $C_0$ we see that $\beta$ is also unramified over $\CX$, i.e.,
$\seq{P}{n}$ lie in finitely many closed fibres
$\CX_{v_1},\dots,\CX_{v_r}$ of $\CX\to C_0$.
So for $v\ne v_i$ we have $\beta\in \het^3(\CO_{\CX,P},\LB)$ for all $P\in \CX_v$ and
hence $\tilde{x}_v^*(\iota_v(\beta))\in H^3(K_v,\LB)$ 
actually lies in 
$\het^3(\CO_v,\LB)\simeq H^3(\kappa(v),\LB)=0$,
which completes the proof.

\itm
The pairing \bref{eq: BM pairing for Hnr} admits a simpler description.
Each $K_v$-point $x_v\in X(K_v)$ induces a map
$\tilde{x}_v^{\#}:\hnr^3(X,\LB)\to H^3(K_v,\LB)$
by \bref{eq: nr cohomology is E0n term}. 
By construction, $\tilde{x}_v^{\#}$ factorizes as
\[
\xm{
\hnr^3(X,\LB)\ar[r]^-{\iota_v} & \het^3(\CO_{X,P_v},\LB)\ar[r]^-{\tilde{x}_v^*} & H^3(K_v,\LB),
}
\]
i.e., $\tilde{x}_v^*\circ\iota_v=\tilde{x}_v^{\#}$,
so \bref{eq: BM pairing for Hnr} can be rewritten as
\beq\label{eq: BM pairing for Hnr simpler}
\lip-,-\rip_{\nr}:X(\BA)\times \hnr^3(X,\LB)\to \QZ,\
((x_v),\beta)\mt \tstsl_{v\in C\uun}j_v\circ \tilde{x}_v^{\#}(\beta).
\eeq
\eenum
\end{para}


\begin{para}
[Obstructions from arithmetic dualities]\label{para: obstructions from ADT}
Let $X$ be a $K$-variety.
\benumr
\itm\label{para: obstructions from ADT 1}
Similarly to the usual Brauer--Manin pairing,
there is a well-defined pairing (see \cite{HS16}*{pp.~589-590}):
\beq\label{eq: BM pairing}
X(\BA)\times \het^3(X,\LB)\to \QZ,\ ((x_v),{\al})\mt \tstsl_{v\in C\uun}j_v\circ x_v^*(\al),
\eeq
where $x_v^*:\het^3(X,\LB)\to H^3(K_v,\LB)$ is the map induced by $x_v\in X(K_v)$.
The \emph{Manin obstruction} $X(\BA)^{\het^3(X,\LB)}$ is defined to be the left kernel of the pairing \bref{eq: BM pairing}.
If there is no dangerous of confusion, we write $X(\BA)^{\het^3(X)}$ or even $X(\BA)^{\het^3}$ for simplicity.
We keep the same convention in \ref{para: obstructions from ADT 2} and \ref{para: obstructions from ADT 3}.
Roughly speaking, if $X$ is a $K$-torsor under certain linear group,
\cite{HS16}*{Theorems 5.1 and 6.1} tells us that the Manin obstruction
$X(\BA)^{\het^3}$ is the only one to the local-global principle.

\itm\label{para: obstructions from ADT 2}
Since the motivic complex $\Z(2)$ arises naturally when we deal with arithmetic dualities,
it is reasonable to consider the pairing
\beq\label{eq: BM pairing H4}
X(\BA)\times \het^4(X,\Z(2))\to \QZ,\ ((x_v),\gamma)\mt \tstsl_{v\in C\uun}j_v'\circ x_v^{\#}(\gamma),
\eeq
where $x_v^{\#}:\het^4(X,\Z(2))\to H^4(K_v,\Z(2))$ is the map induced by $x_v\in X(K_v)$,
and $j_v':\het^4(K_v,\Z(2))\to \QZ$ is the local invariant.
Let $X(\BA)^{\het^4}$ be the left kernel of the pairing \bref{eq: BM pairing H4}
which is also called the \emph{Manin obstruction}.

\itm\label{para: obstructions from ADT 3}
If $X$ is smooth and integral,
the left kernel $X(\BA)^{\hnr^3}$ of \bref{eq: BM pairing for Hnr simpler} is called the \emph{unramified obstruction}.
When $X$ is a $K$-torus or a connected reductive $K$-group,
certain subgroup of $\hnr^3(X,\LB)$ can be used to measure the defect to weak approximation
(see \cite{HSS15}*{Theorem 5.2} and \cite{Tia21WA}*{Theorem 4.1}).
\eenum
\end{para}

\section{Manin obstructions and unramified obstructions}\label{section: BM and nr}
In this section, let $X$ be any smooth integral $K$-variety.
Applying \bref{para: obstructions from functors} to the functors
$\het^3(-,\LB)$, $\het^4(-,\Z(2))$ and $\hnr^3(-,\LB)$
yields respective obstructions
$X(\BA)^{\BF_{\et}^3}$, $X(\BA)^{\BF_{\et}^4}$ and $X(\BA)^{\BF_{\nr}^3}$.
Here we write $\BF$ to emphasize that
the obstructions are obtained via \emph{functors} instead of arithmetic dualities.
In the mean time, there are obstructions $X(\BA)^{\het^3}$, $X(\BA)^{\het^4}$ and $X(\BA)^{\hnr^3}$ arising from arithmetic dualities
\bref{para: obstructions from ADT}.
This section is devoted to the proof of
\[
\xm{
X(\BA)^{\BF_{\nr}^3}\ar@{=}[r]\ar@{=}[d] & X(\BA)^{\BF_{\et}^4}\ar@{^(->}[r]\ar@{=}[d] & X(\BA)^{\BF_{\et}^3}\ar@{=}[d] \\
X(\BA)^{\hnr^3}\ar@{=}[r] & X(\BA)^{\het^4}\ar@{^(->}[r] & X(\BA)^{\het^3}
}
\]
which implies \cref{thm: introduction 1}\ref{thm: introduction 11}.
Hence these are all the only obstructions to the local-global principle
if $X$ is a $K$-torsor under certain linear group
(see \cite{HS16}*{Theorems 5.1 and 6.1}).

\begin{para}
We relate the groups
$\het^3(X,\LB)$, $\het^4(X,\Z(2))$ and $\hnr^3(X,\LB)$
for a smooth integral $K$-variety $X$.
Let $q:X_{\et}\to X_{\Zar}$ be the projection from the small \'etale site to the small Zariski site.
Consider the distinguished triangles of Zariski sheaves
$\Z(2)_{\Zar}\to \BR q_*\Z(2)_{\et}\to \tau_{\ge 4}\BR q_*\Z(2)_{\et}\to \Z(2)_{\Zar}[1]$
and
$\LB_{\Zar}\to \BR q_*\LB_{\et}\to \tau_{\ge 3}\BR q_*\LB_{\et}\to \LB_{\Zar}[1]$
(see \cite{Kah12}*{\S2D-2F}).
We obtain a morphism of distinguished triangles
\[
\xm{
\LB_{\Zar}\ar[r]\ar[d] & \BR q_*\LB_{\et}\ar[r]\ar[d] & \tau_{\ge 3}\BR q_*\LB_{\et}\ar[r]\ar@{-->}[d] & \LB_{\Zar}[1]\ar[d] \\
\Z(2)_{\Zar}[1]\ar[r] & \BR q_*\Z(2)_{\et}[1]\ar[r] & \tau_{\ge 4}\BR q_*\Z(2)_{\et}[1]\ar[r] & \Z(2)_{\Zar}[2],
}
\]
where the vertical arrows are induced by the distinguished triangle
$\Z(2)_{\tau}\to \Q(2)_{\tau}\to \LB_{\tau}\to \Z(2)_{\tau}[1]$
with $\tau\in \{\Zar,\et\}$.
Taking Zariski cohomology yields a \cmdm of abelian groups with exact rows
\beac\label{diag: construct phi and psi}
\xm{
H^3_{\Zar}(X,\LB)\ar[r]\ar[d] & \het^3(X,\LB)\ar[r]\ar[d]_-{\dif_X} & H^0_{\Zar}(X,\CH^3_{\et}(\LB))\ar@{=}[d]\ar[r] & \rmCH^2(X)\ots_{\Z}\QZ\ar[d] \\
\rmCH^2(X)\ar[r] & \het^4(X,\Z(2))\ar[r] & H^0_{\Zar}(X,\CH^3_{\et}(\LB))\ar[r] & 0
}
\eeac
by \cite{Kah12}*{Propositions 2.9 and 2.11}.
Thanks to \bref{eq: nr cohomology is E0n term}, we obtain group homomorphisms
\[
\phi_X:\het^3(X,\LB)\to \hnr^3(X,\LB)
\qand
\psi_X:\het^4(X,\Z(2))\to \hnr^3(X,\LB)
\]
with $\psi_X\circ\dif_X=\phi_X$.
Note that $\psi_X$ is surjective by the exactness of the lower row in \bref{diag: construct phi and psi}.
\end{para}

\begin{prop}\label{prop: comparison of functor H3 and Hnr3}
Let $X$ be a smooth integral $K$-variety.
The unramified obstruction is finer than the Manin obstruction, i.e.,
\[
X(\BA)^{\BF_{\nr}^3}=X(\BA)^{\BF_{\et}^4}\subset X(\BA)^{\BF_{\et}^3}.
\]
\end{prop}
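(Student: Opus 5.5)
The plan is to prove the equality and the inclusion separately. In both cases I will use the naturality of the diagram \bref{diag: construct phi and psi} with respect to the points $x_v:\e K_v\to X$, together with the same diagram for $X=\e K$ and $X=\e K_v$. For a field $F$ (namely $K$ or $K_v$) we have $\rmCH^2(F)=0$ and $H^3_{\Zar}(F,\LB)=0$, since Zariski cohomology of a point vanishes in positive degrees. Hence for $\e F$ both $\phi_F:H^3(F,\LB)\to \hnr^3(F,\LB)=H^3(F,\LB)$ and $\psi_F:H^4(F,\Z(2))\to H^3(F,\LB)$ are isomorphisms, and $\psi_F\circ\dif_F=\phi_F=\mathrm{id}$, so $\dif_F$ is an isomorphism as well.

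\emph{Equality $X(\BA)^{\BF_{\nr}^3}=X(\BA)^{\BF_{\et}^4}$.} The maps $\psi$ are natural transformations $\het^4(-,\Z(2))\to\hnr^3(-,\LB)$ on the relevant schemes. Since $\Z(2)_{\tau}\to\Q(2)_{\tau}\to\LB_{\tau}$ and the truncation triangles are functorial, the only point to verify is compatibility with the pullback $\tilde x_v^{\#}$ on $\hnr^3$. That map is defined through the local ring $\CO_{X,P_v}$, and $\psi$ commutes with restriction to $\e\CO_{X,P_v}$ and then to $\e K_v$. Thus for $\gamma\in\het^4(X,\Z(2))$ we get $\tilde x_v^{\#}(\psi_X\gamma)=\psi_{K_v}(x_v^{\#}\gamma)$, and similarly $\psi_K$ is compatible with the diagonal maps $\Delta$. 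Because $\psi_K$ and $\psi_{K_v}$ are bijections, $\ev_\gamma(x)\in\Im\Delta$ holds if and only if $\ev_{\psi_X\gamma}(x)\in\Im\Delta$. Since $\psi_X$ is surjective, every $\beta\in\hnr^3(X,\LB)$ is of the form $\psi_X\gamma$. Therefore $X(\BA)^{\psi_X\gamma}=X(\BA)^{\gamma}$, and taking the intersection over all $\gamma$ gives the equality.

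\emph{Inclusion $X(\BA)^{\BF_{\et}^4}\subset X(\BA)^{\BF_{\et}^3}$.} Take $\al\in\het^3(X,\LB)$. By naturality of the connecting map $\dif$, we have $x_v^{\#}(\dif_X\al)=\dif_{K_v}(x_v^*\al)$ and $\Delta\circ\dif_K=\prod_v\dif_{K_v}\circ\Delta$. Since each $\dif_{K_v}$ is bijective, $(x_v^*\al)_v\in\Im\Delta$ holds if and only if $(x_v^{\#}\dif_X\al)_v\in\Im\Delta$. Hence $X(\BA)^{\al}=X(\BA)^{\dif_X\al}\supset X(\BA)^{\BF_{\et}^4}$, which gives the inclusion. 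Note that $\dif_X$ need not be surjective, which is why we only obtain an inclusion here.

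The main obstacle is making precise that the Kahn truncation diagram \bref{diag: construct phi and psi} is functorial, in particular for the pro-scheme $\e\CO_{X,P_v}$, and that the identification \bref{eq: nr cohomology is E0n term} used to define $\tilde x_v^{\#}$ agrees with the map of $H^0_{\Zar}$ induced by $x_v$. I would handle this by noting that $\phi$ and $\psi$ are edge maps coming from morphisms of complexes of Zariski sheaves. Such maps commute with pullback along any morphism of smooth (essentially finite type) schemes, and for local schemes and fields $H^0_{\Zar}(\CH^3)$ is just $\het^3$, so the desired compatibility reduces to passing to the limit over open neighbourhoods of $P_v$.
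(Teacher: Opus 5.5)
Your proposal is correct and is essentially the paper's proof. The paper writes down the same three-row commutative diagram ($\het^3$, $\het^4$ and $\hnr^3$ of $X$, $K_v$ and $K$) and uses that $\dif$ and $\psi$ are bijective at $K$ and $K_v$ to get $X(\BA)^{\al}=X(\BA)^{\psi_X(\al)}$. It then deduces the equality from surjectivity of $\psi_X$ and obtains the inclusion from the upper squares, exactly as you do.

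One minor slip: $\LB_{\Zar}$ is a complex, so $H^3_{\Zar}(\e F,\LB)$ does not vanish merely because $\e F$ is a point. It does vanish, but because motivic cohomology of a field is zero in degrees above the weight. You do not need this vanishing anyway: $\phi_F=\mathrm{id}$ by construction, and $\rmCH^2(F)=0$ already makes $\psi_F$, and hence $\dif_F$, bijective.
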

\begin{proof}
Consider the following \cmdm of abelian groups
\beac\label{diag: Het3 Het4 and Hnr3}
\xm{
\het^3(X,\LB)\ar[r]^-{(x_v^*)}\ar[d]_-{\dif_X} & \prod H^3(K_v,\LB)\ar[d]_-{\simeq}^-{(\dif_v)} & H^3(K,\LB)\ar[l]_-{\Delta}\ar[d]_-{\simeq}^-{\dif} \\
\het^4(X,\Z(2))\ar[r]^-{(x_v^{\#})}\ar@{->>}[d]_-{\psi_X} & \prod H^4(K_v,\Z(2))\ar[d]_-{\simeq}^-{(\psi_v)} & H^4(K,\Z(2))\ar[l]_-{\Delta}\ar[d]_-{\simeq}^-{\psi} \\
\hnr^3(X,\LB)\ar[r]^-{(\tilde{x}_v^{\#})} & \prod \hnr^3(K_v,\LB) & \hnr^3(K,\LB).\ar[l]_-{\Delta_{\nr}}
}
\eeac
Take any $\al\in \het^4(X,\Z(2))$.
Thus $(x_v^{\#}\al)\in \Im\Delta$ \ttiff $(\tilde{x}_v^{\#}(\psi_X\al))\in \Im\Delta_{\nr}$ by a diagram chase,
i.e., $X(\BA)^{\al}=X(\BA)^{\psi_X(\al)}$.
So it follows that
\[
X(\BA)^{\BF_{\nr}^3}
=\tst\bigcap\limits_{\beta\in \hnr^3(X)}X(\BA)^{\beta}
=\tst\bigcap\limits_{\al\in \het^4(X)}X(\BA)^{\psi_X(\al)}
=\tst\bigcap\limits_{\al\in \het^4(X)}X(\BA)^{\al}
=X(\BA)^{\BF^4_{\et}},
\]
where the second equality holds by the surjectivity of $\psi_X$.
Similarly, a direct computation yields $X(\BA)^{\BF_{\et}^4}\subset X(\BA)^{\BF_{\et}^3}$
via the upper squares in \bref{diag: Het3 Het4 and Hnr3}.
\end{proof}

\begin{cor}\label{cor: H3nr is H3et for curves}
Let $X$ be a smooth integral variety over $K$.
If $\rmCH^2(X)=0$, 
then we have $X(\BA)^{\BF_{\nr}^3}=X(\BA)^{\BF_{\et}^3}$.
\end{cor}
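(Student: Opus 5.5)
The idea is to show that, when $\rmCH^2(X)=0$, the map $\dif_X:\het^3(X,\LB)\to\het^4(X,\Z(2))$ in \bref{diag: construct phi and psi} is surjective, and then run the diagram chase from the proof of \cref{prop: comparison of functor H3 and Hnr3} backwards. The relevant part of the lower row of \bref{diag: construct phi and psi} is exact:
\[
\rmCH^2(X)\to \het^4(X,\Z(2))\xrightarrow{\psi_X} \hnr^3(X,\LB)\to 0 .
\]
If $\rmCH^2(X)=0$, then $\psi_X$ is an isomorphism. The upper row of \bref{diag: construct phi and psi} ends with $\hnr^3(X,\LB)\to \rmCH^2(X)\ots_{\Z}\QZ$, and this target vanishes. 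Hence the map $\het^3(X,\LB)\to \hnr^3(X,\LB)$, which is $\phi_X$, is surjective.

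Since $\psi_X\circ\dif_X=\phi_X$ and $\psi_X$ is bijective, $\dif_X=\psi_X^{-1}\circ\phi_X$ is also surjective. So every $\gamma\in\het^4(X,\Z(2))$ has the form $\dif_X(\al)$ for some $\al\in\het^3(X,\LB)$.

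Next we use the upper squares of \bref{diag: Het3 Het4 and Hnr3}. There the vertical maps $(\dif_v)$ and $\dif$ are isomorphisms; this holds for $\dif$ as well, since $H^i(K,\Q(2))$ vanishes in the relevant degrees, as the diagram already records. The chase used in the proof of \cref{prop: comparison of functor H3 and Hnr3} then gives $X(\BA)^{\al}=X(\BA)^{\dif_X(\al)}$ for every $\al\in\het^3(X,\LB)$. Concretely, $(x_v^*\al)$ lies in $\Im\Delta$ if and only if $(\dif_v x_v^*\al)=(x_v^{\#}\dif_X\al)$ lies in $(\dif_v)(\Im\Delta)=\Im\Delta$ for $H^4$. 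Combining this with the surjectivity of $\dif_X$, we get
\[
X(\BA)^{\BF_{\et}^3}=\tst\bigcap_{\al}X(\BA)^{\al}=\tst\bigcap_{\al}X(\BA)^{\dif_X(\al)}=\tst\bigcap_{\gamma\in\het^4(X)}X(\BA)^{\gamma}=X(\BA)^{\BF_{\et}^4}.
\]
Together with \cref{prop: comparison of functor H3 and Hnr3}, this gives $X(\BA)^{\BF_{\nr}^3}=X(\BA)^{\BF_{\et}^3}$.

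The main point to check is the exactness of the two rows of \bref{diag: construct phi and psi} at the relevant spots, which comes from Kahn's results cited there. In particular, we need that the right-hand map of the upper row really lands in $\rmCH^2(X)\ots\QZ$, so that its vanishing gives surjectivity of $\phi_X$. Everything else is formal.
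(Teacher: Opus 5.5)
Your proposal is correct and follows the paper's proof. With $\rmCH^2(X)=0$, exactness of the two rows of \bref{diag: construct phi and psi} makes $\phi_X$ surjective and $\psi_X$ bijective, so $\dif_X=\psi_X^{-1}\circ\phi_X$ is surjective, and the chase through the upper squares of \bref{diag: Het3 Het4 and Hnr3} upgrades the inclusion of \cref{prop: comparison of functor H3 and Hnr3} to an equality; you simply spell out that final chase (including $X(\BA)^{\al}=X(\BA)^{\dif_X(\al)}$), which the paper leaves implicit.
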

\begin{proof}
By the exactness of the rows in \bref{diag: construct phi and psi},
$\phi_X:\het^3(X,\LB)\to \hnr^3(X,\LB)$ is surjective
and $\psi_X:\het^4(X,\Z(2))\to \hnr^3(X,\LB)$ is bijective.
Hence $\dif_X$ is also surjective by $\psi_X\circ\dif_X=\phi_X$.
\end{proof}

We now compare the obstructions defined by functors with those by arithmetic dualities.
The generalized Weil reciprocity law \cite{Ser94}*{Chapitre II, Annexe, \S3}
gives us a complex
\[
\xm{
H^3(K,\LB)\ar[r] & \tst\bigoplus\limits_{v\in C\uun}H^3(K_v,\LB)\ar[r]^-{\sum j_v} & \QZ
}
\]
which is actually exact by a piece of Poitou--Tate sequence with finite coefficients.

\begin{lem}\label{lem: exactess of the generalized Weil reciprocity law}
There is an exact sequence of abelian groups
\[
H^3(K,\LB)\to \tst\bigoplus\limits_{v\in C\uun}H^3(K_v,\LB)\to \QZ\to 0.
\]
\end{lem}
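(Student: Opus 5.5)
We reduce to finite coefficients and pass to the direct limit over $n$. For each $n\ge 1$, we use the Poitou--Tate sequence for the finite Galois module $\mu_n^{\ots 2}$ over $K$ (see \cite{HSS15}*{\S2} or Izquierdo's version for $p$-adic function fields). The relevant dual module is $\Hom(\mu_n^{\ots2},\mu_n^{\ots 3})=\mu_n$, and the cohomological dimension of $K$ is $3$. The tail of the sequence reads
\[
H^3(K,\mu_n^{\ots 2})\to \tst\bigoplus\limits_{v\in C\uun}H^3(K_v,\mu_n^{\ots 2})\to H^0(K,\mu_n)^{D}\to 0,
\]
where $(-)^D=\Hom(-,\QZ)$. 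Here we use that the restricted product in degree $3$ is a direct sum: for all but finitely many $v$ we have $H^3(\CO_v,\mu_n^{\ots 2})\simeq H^3(\kappa(v),\mu_n^{\ots2})=0$, since $\kappa(v)$ is a finite field. The last map is the sum of local invariants composed with the cup-product pairing $H^3(K_v,\mu_n^{\ots2})\times H^0(K_v,\mu_n)\to H^3(K_v,\mu_n^{\ots3})\simeq \Z/n$. This identifies it with the map $\sum j_v$ evaluated against global $n$-th roots of unity.

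Next we take the direct limit over $n$, ordered by divisibility. Filtered colimits are exact, and Galois cohomology of fields commutes with such colimits of coefficients, so we obtain an exact sequence
\[
H^3(K,\LB)\to \tst\bigoplus\limits_{v}H^3(K_v,\LB)\to \drl_n H^0(K,\mu_n)^D\to 0.
\]
The transition maps on $H^0(K,\mu_n)^D$ are dual to the maps $\mu_{nm}(K)\to\mu_n(K)$, $\zeta\mapsto\zeta^m$. We must check that these are compatible with the transition maps $\mu_n^{\ots2}\to\mu_{nm}^{\ots2}$ on the left terms, and that the resulting map is the one induced by $\sum j_v$ via $j_v:H^3(K_v,\LB)\simeq\QZ$.

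It remains to identify the limit term with $\QZ$; this step needs the most care. The group $\mu(K)$ is finite, since $k$ is algebraically closed in $K$ and $\mu(k)$ is finite. A naive limit would therefore give $\drl_n \mu_n(K)^D$, and with the transition maps above this is zero rather than $\QZ$. This signals that the correct normalization uses $\mu_n^{\ots 2}$ paired with $\Z/n$: the Tate dual in the $3$-dimensional duality of \cite{HSS15} is $\Hom(F,\mu_n^{\ots2})$. So we pair $\mu_n^{\ots2}$ with $\Z/n\Z$, and the tail becomes
\[
\bigoplus_v H^3(K_v,\mu_n^{\ots2})\to H^0(K,\Z/n)^D=\Z/n\to 0.
\]
This map is precisely $\sum j_v$ restricted to $n$-torsion. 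The transition maps on $(\Z/n)^D$ are the inclusions $\frac1n\Z/\Z\hookrightarrow\frac1{nm}\Z/\Z$, so the limit is $\QZ$.

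Finally, surjectivity onto $\QZ$ can be double-checked directly: for a single place $v$, the local invariant $j_v$ is already an isomorphism $H^3(K_v,\LB)\simeq\QZ$. The main point to verify carefully is therefore the compatibility of the Poitou--Tate maps with the local invariants $j_v$ and the transition maps, which we take from \cite{HSS15}.
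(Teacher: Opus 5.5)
Your final argument is exactly the paper's proof: pair $F=\mu_n^{\ots 2}$ with $F'=\Hom(\mu_n^{\ots 2},\mu_n^{\ots 2})\simeq \Z/n\Z$, use the tail $H^3(K,F)\to\bigoplus_{v}H^3(K_v,F)\to H^0(K,F')^D\to 0$ of the Harari--Scheiderer--Szamuely Poitou--Tate sequence, and pass to the direct limit over $n$. Discard your initial dual module $\mu_n$, since the dualizing module for $K$ and the $K_v$ is $\QZ(2)$, not $\QZ(3)$; also, $\kappa(v)$ is a $p$-adic field rather than a finite field, though $H^3(\kappa(v),\mu_n^{\ots 2})=0$ still holds because $\mathrm{cd}(\kappa(v))=2$.
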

\begin{proof}
Let $F=\mu_n^{\ots2}$ and let
$F'\ce \hom(\mu_n^{\ots2},\QZ(2))$.
Since $\mu_n^{\ots2}$ is $n$-torsion,
we conclude $F'=\hom(\mu_n^{\ots2},\mu_n^{\ots2})\simeq \Z/n\Z$.
Subsequently, we obtain
\[
H^0(K,F')^D\simeq \hom(\Z/n\Z,\QZ)\simeq \Z/n\Z.
\]
But the Poitou--Tate duality for $F$ gives us an exact sequence
\[
H^3(K,F)\to \tst\bigoplus\limits_{v\in C\uun}H^3(K_v,F)\to H^0(K,F')^D\to 0
\]
by \cite{HSS15}*{Theorem 3.3},
so taking direct limit yields the desired exact sequence.
\end{proof}


Now, we show that arithmetic dualities define the same obstructions as functors.

\begin{prop}\label{prop: comparison of funct obs with ADT obs}
Let $X$ be a smooth integral $K$-variety.
We have
\[
X(\BA)^{\BF_{\et}^3}= X(\BA)^{\het^3}, \quad\
X(\BA)^{\BF_{\et}^4}= X(\BA)^{\het^4}
\qand
X(\BA)^{\BF_{\nr}^3}= X(\BA)^{\hnr^3}.
\]
\end{prop}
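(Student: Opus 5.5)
The plan is to prove the three equalities one at a time, starting with the étale one and deducing the other two from it via the diagram \bref{diag: Het3 Het4 and Hnr3}. The basic point is that for each class $\al$, the local evaluations $(x_v^*\al)_v$ form an element of $\bigoplus_v H^3(K_v,\LB)$. Indeed, by spreading out $\al$ to a smooth model $\CX\to C_0$ after shrinking $C_0$, we get $x_v^*\al\in \het^3(\CO_v,\LB)\simeq H^3(\kappa(v),\LB)=0$ for almost all $v$. Then \Cref{lem: exactess of the generalized Weil reciprocity law} identifies $\Im\Delta$ with the kernel of $\sum_v j_v$, so we get
\[
(x_v^*\al)\in\Im\Delta \iff \tstsl_v j_v(x_v^*\al)=0 .
\]
By definition, the right-hand side says $(x_v)$ is orthogonal to $\al$ under \bref{eq: BM pairing}. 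Intersecting over all $\al\in\het^3(X,\LB)$ gives $X(\BA)^{\BF_{\et}^3}=X(\BA)^{\het^3}$.

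For the unramified case the argument is identical. Take $\beta\in\hnr^3(X,\LB)$. The local images $\tilde{x}_v^{\#}(\beta)\in H^3(K_v,\LB)=\hnr^3(K_v,\LB)$ vanish for almost all $v$; this finiteness was already shown in (\ref{para: unramified cohomology}\ref{para: unramified cohomology 3}). The rightmost column of \bref{diag: Het3 Het4 and Hnr3} identifies $\hnr^3(K,\LB)$ with $H^3(K,\LB)$, so \Cref{lem: exactess of the generalized Weil reciprocity law} again shows that lying in $\Im\Delta_{\nr}$ is equivalent to $\sum_v j_v\tilde{x}_v^{\#}(\beta)=0$. This is exactly the condition that the pairing \bref{eq: BM pairing for Hnr simpler} vanishes on $\beta$, which gives $X(\BA)^{\BF_{\nr}^3}=X(\BA)^{\hnr^3}$.

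For $\het^4(X,\Z(2))$, we transport the statement through the isomorphisms $\dif$ and $\dif_v$, using $j_v'=j_v\circ\dif_v^{-1}$. Since $\dif$ and the $\dif_v$ are isomorphisms, the exact sequence of \Cref{lem: exactess of the generalized Weil reciprocity law} becomes an exact sequence
\[
H^4(K,\Z(2))\to\tst\bigoplus_v H^4(K_v,\Z(2))\xrightarrow{\sum j_v'}\QZ\to 0 .
\]
For $\gamma\in\het^4(X,\Z(2))$, the family $(x_v^{\#}\gamma)$ again has almost all components zero. One way to see this is to spread $\gamma$ out over a model. Alternatively, use that $\psi_X$ and the $\psi_v$ are compatible, as in the diagram, and that the $\psi_v$ are isomorphisms; this reduces the claim to the finiteness already proved for $\psi_X(\gamma)$. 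The same argument as above then gives $X(\BA)^{\BF_{\et}^4}=X(\BA)^{\het^4}$. Alternatively, this last equality follows by combining the unramified equality just proved with the two equalities $X(\BA)^{\BF_{\nr}^3}=X(\BA)^{\BF_{\et}^4}$ (\Cref{prop: comparison of functor H3 and Hnr3}) and $X(\BA)^{\het^4}=X(\BA)^{\hnr^3}$. The second of these holds because of the surjectivity of $\psi_X$, the relation $j'_v=j_v\circ\psi_v$ under the identifications, and $x_v^{\#}$ being compatible with $\tilde{x}_v^{\#}\circ\psi_X$.

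The main obstacle is the finiteness/support issue: we must ensure the local images land in the direct sum rather than only in the product, so that the exact sequence of \Cref{lem: exactess of the generalized Weil reciprocity law} applies. For étale classes this needs the spreading-out argument over a model, together with the vanishing $H^3(\kappa(v),\LB)=0$, since $\kappa(v)$ is finite of cohomological dimension $1$. Once that is in place, everything else is a diagram chase.
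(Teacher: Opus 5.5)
Your proposal is correct and is essentially the paper's argument: the local evaluations lie in $\bigoplus_v H^3(K_v,\LB)$, \cref{lem: exactess of the generalized Weil reciprocity law} turns membership in $\Im\Delta$ into the vanishing of $\sum_v j_v$, and the $\het^4$ case is transported through the isomorphisms $\dif$ and $\dif_v$. Of your two finiteness arguments for $\het^4$, the one via the isomorphisms $\psi_v$ is the safer choice, since spreading out classes in $\het^4(X,\Z(2))$ is less standard than for torsion coefficients. One correction: the residue fields $\kappa(v)$ at closed points of $C$ are finite extensions of the $p$-adic field $k$, not finite fields, so they have cohomological dimension $2$; the vanishing $H^3(\kappa(v),\LB)=0$ that you need still holds, but for that reason.
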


\begin{proof}
\hfill
\bitem
\itm
The lower row of the commutative diagram below is exact by \cref{lem: exactess of the generalized Weil reciprocity law}
\beac\label{diag: Brauer--Manin with generalized reciprocity}
\xm{
X(K)\ar[r]\ar[d]_-{\ev} & X(\BA)\ar[d]^-{\ev} \\
H^3(K,\LB)\ar[r]^-{\Delta} & \bigoplus H^3(K_v,\LB)\ar[r]^-{\Sigma} & \QZ,
}
\eeac
where the diagonal map $X(\BA)\to \prod H^3(K_v,\LB)$ factors through the direct sum by
\cite{HS16}*{p.~589, (16)}.
Take any $(x_v)\in X(\BA)$ and any $\al\in \het^3(X,\LB)$.
We deduce
\[
(x_v^*\al)\in \Im\Delta=\Ker\Sigma
\iff
\lip (x_v),\al\rip=\tst\sum j_v\circ x_v^*(\al)=0
\]
by the exactness of the lower row in \bref{diag: Brauer--Manin with generalized reciprocity},
which means $X(\BA)^{\BF_{\et}^3}= X(\BA)^{\het^3}$.

\itm
Since $H^3(-,\LB)\simeq H^4(-,\Z(2))$ for $K$ and any $K_v$,
we conclude $X(\BA)^{\BF_{\et}^4}= X(\BA)^{\het^4}$.

\itm
Take any $(x_v)\in X(\BA)$ and any $\beta\in \hnr^3(X,\LB)$.
By assumption, we have
\[
(\tilde{x}_v^{\#}\beta)\in \Im\Delta=\ker\Sigma
\iff
\lip (x_v),\beta\rip_{\nr}
=\tst\sum j_v\circ x_v^{\#}(\beta)
=0
\]
by \bref{eq: BM pairing for Hnr simpler} and \bref{diag: Brauer--Manin with generalized reciprocity},
i.e., $X(\BA)^{\BF_{\nr}^3}= X(\BA)^{\hnr^3}$.
\qedhere
\eitem
\end{proof}

Although the next result is an immediate consequence of \cref{prop: comparison of functor H3 and Hnr3} and
\ref{prop: comparison of funct obs with ADT obs},
it is worth mentioning that these comparisons can be deduced from arithmetic dualities.

\begin{prop}\label{prop: Het3 Het4 and Hnr3}
Let $X$ be a smooth integral $K$-variety.
We have
\[
X(\BA)^{\hnr^3}=X(\BA)^{\het^4}\subset X(\BA)^{\het^3}.
\]
\end{prop}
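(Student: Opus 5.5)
The statement follows at once from \cref{prop: comparison of functor H3 and Hnr3} together with \cref{prop: comparison of funct obs with ADT obs}. Since the text stresses that the comparison can also be seen through arithmetic dualities, I plan to argue directly with the pairings \bref{eq: BM pairing}, \bref{eq: BM pairing H4} and \bref{eq: BM pairing for Hnr simpler}. The key step is to show that these pairings are compatible with the maps $\dif_X$ and $\psi_X$. Concretely, for $(x_v)\in X(\BA)$, $\al\in\het^3(X,\LB)$ and $\gamma\in\het^4(X,\Z(2))$, I want
\[
\lip (x_v),\al\rip=\lip (x_v),\dif_X\al\rip
\qand
\lip (x_v),\gamma\rip=\lip (x_v),\psi_X\gamma\rip_{\nr}.
\]

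For the first identity I use the upper left square of \bref{diag: Het3 Het4 and Hnr3}, which gives $x_v^{\#}\circ\dif_X=\dif_v\circ x_v^*$. Combined with the definition $j'_v=j_v\circ\dif_v\ui$, this yields $j'_v(x_v^{\#}\dif_X\al)=j_v(x_v^*\al)$ for each $v$. Summing over $v$ gives the first identity. The square commutes by functoriality of the connecting map of $0\to\Z(2)\to\Q(2)\to\LB\to0$ under pullback along $x_v$.

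For the second identity I use the lower left square, which gives $\tilde{x}_v^{\#}\circ\psi_X=\psi_v\circ x_v^{\#}$. For a field $F=K_v$ we have $\hnr^3(K_v,\LB)=H^3(K_v,\LB)$. The map $\psi_v$ is the inverse of $\dif_v$, because $\phi_{K_v}$ is the identity and $\psi_{K_v}\circ\dif_{K_v}=\phi_{K_v}$. Hence $j_v(\tilde{x}_v^{\#}\psi_X\gamma)=j_v(\dif_v\ui x_v^{\#}\gamma)=j'_v(x_v^{\#}\gamma)$, and summing gives the second identity. The step I expect to need the most care is the commutativity of this lower square. It rests on naturality of Kahn's truncation triangles under the morphism $\e K_v\to X$, which factors through $\e\CO_{X,P_v}$. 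I would check it by noting that $\psi$ is induced by a morphism of complexes of Zariski sheaves that is functorial in the scheme. Evaluating at the local ring $\CO_{X,P_v}$ and using \bref{eq: nr cohomology is E0n term} then identifies $\tilde{x}_v^{\#}$ with the composite $\tilde{x}_v^*\circ\iota_v$.

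Given the two identities, the inclusions follow. If $(x_v)\in X(\BA)^{\hnr^3}$, then for every $\gamma$ the second identity gives $\lip (x_v),\gamma\rip=\lip (x_v),\psi_X\gamma\rip_{\nr}=0$, so $X(\BA)^{\hnr^3}\subset X(\BA)^{\het^4}$. For the reverse inclusion, $\psi_X$ is surjective, so every $\beta\in\hnr^3(X,\LB)$ has the form $\psi_X\gamma$, and the same identity shows $X(\BA)^{\het^4}\subset X(\BA)^{\hnr^3}$. Finally, if $(x_v)\in X(\BA)^{\het^4}$, then the first identity gives $\lip(x_v),\al\rip=\lip(x_v),\dif_X\al\rip=0$ for every $\al\in\het^3(X,\LB)$, so $X(\BA)^{\het^4}\subset X(\BA)^{\het^3}$.
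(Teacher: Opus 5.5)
Your proposal is correct and follows essentially the same route as the paper's proof. Both arguments identify $\psi_v=\dif_v^{-1}$ from $\phi_v=\psi_v\circ\dif_v=\mathrm{id}$, and both use the commutative squares of \bref{diag: Het3 Het4 and Hnr3} to show that the pairings are compatible with $\dif_X$ and $\psi_X$. Both then conclude the equality from the surjectivity of $\psi_X$.
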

\begin{proof}
Let $\dif_v, \phi_v, \psi_v$ be as in \bref{diag: Het3 Het4 and Hnr3}.
Since $\phi_v=\psi_v\circ\dif_v:H^3(K_v,\LB)\to H^3(K_v,\LB)$ is the identity,
we conclude $\psi_v=\dif_v\ui$.
\bitem
\itm
We prove $X(\BA)^{\hnr^3}=X(\BA)^{\het^4}$.
Take any $(x_v)\in X(\BA)$ and any $\al\in \het^4(X,\Z(2))$.
By the commutativity of the lower squares of \bref{diag: Het3 Het4 and Hnr3},
we obtain
\[
\tstsl_{v\in C\uun} j_v'\circ x_v^{\#}(\al)
=\tstsl_{v\in C\uun} j_v\circ \psi_v\circ x_v^{\#}(\al)
=\tstsl_{v\in C\uun} j_v\circ \tilde{x}_v^{\#}(\psi_X(\al)),
\]
i.e., $((x_v),\al)=\lip (x_v),\psi_X(\al)\rip_{\nr}$.
Since $\psi_X:\het^4(X,\Z(2))\to \hnr^3(X,\LB)$ is surjective,
$(x_v)\perp \het^4(X,\Z(2))$ amounts to $(x_v)\perp \hnr^3(X,\LB)$,
i.e., $X(\BA)^{\hnr^3}=X(\BA)^{\het^4}$.

\itm
We prove $X(\BA)^{\het^4}\subset X(\BA)^{\het^3}$.
Take any $(x_v)\in X(\BA)^{\het^4}$ and any $\al\in \het^3(X,\LB)$.
We have
$\tstsl j_v\circ x_v^*(\al)
=\tstsl j_v'\circ x_v^{\#}(\dif_X(\al))
=0$, i.e., $(x_v)\in X(\BA)^{\het^3}$.
\qedhere
\eitem
\end{proof}

Analogously to \cref{cor: H3nr is H3et for curves},
\cref{prop: Het3 Het4 and Hnr3} gives equivalent obstructions when $X$ is a curve.

\begin{cor}\label{lem: comparing H3 and H4}
If $X$ is a smooth integral curve over $K$, then $X(\BA)^{\het^4}=X(\BA)^{\het^3}$.
\end{cor}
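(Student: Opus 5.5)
The plan is to reduce to \cref{cor: H3nr is H3et for curves} and then invoke \cref{prop: comparison of funct obs with ADT obs}. Suppose $X$ is a smooth integral curve over $K$. Then $\rmCH^2(X)$ is the group of codimension-$2$ cycles modulo rational equivalence on a $1$-dimensional scheme. It vanishes because there are no codimension-$2$ points, so $X^{(2)}=\emptyset$. Hence the hypothesis of \cref{cor: H3nr is H3et for curves} holds, and we get $X(\BA)^{\BF_{\nr}^3}=X(\BA)^{\BF_{\et}^3}$.

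Next we combine this with \cref{prop: comparison of functor H3 and Hnr3}, which gives $X(\BA)^{\BF_{\nr}^3}=X(\BA)^{\BF_{\et}^4}$. Together these yield $X(\BA)^{\BF_{\et}^4}=X(\BA)^{\BF_{\et}^3}$. By \cref{prop: comparison of funct obs with ADT obs}, the functorial obstructions coincide with those defined by arithmetic dualities, so $X(\BA)^{\het^4}=X(\BA)^{\het^3}$.

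The argument can also be run directly at the level of groups, mirroring \cref{prop: Het3 Het4 and Hnr3}. Exactness of the lower row of \bref{diag: construct phi and psi} with $\rmCH^2(X)=0$ makes $\psi_X$ injective, hence bijective. Exactness of the upper row, with last term $\rmCH^2(X)\ots\QZ=0$, makes $\phi_X$ surjective. Since $\psi_X\circ\dif_X=\phi_X$, the map $\dif_X:\het^3(X,\LB)\to\het^4(X,\Z(2))$ is surjective. The pairing identity $\sum_v j_v\circ x_v^*(\al)=\sum_v j'_v\circ x_v^{\#}(\dif_X\al)$ from the proof of \cref{prop: Het3 Het4 and Hnr3} then shows that orthogonality to $\het^3$ forces orthogonality to all of $\het^4$. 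This gives the reverse inclusion $X(\BA)^{\het^3}\subset X(\BA)^{\het^4}$.

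The only point needing care is identifying the terms $\rmCH^2(X)$ in Kahn's exact sequences \cite{Kah12}*{Propositions 2.9 and 2.11} with the Chow group of codimension-$2$ cycles, so that the vanishing for curves is legitimate. I expect no further obstacle.
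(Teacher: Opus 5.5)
Your proof is correct, and its skeleton matches the paper's. Both arguments first show that $\dif_X:\het^3(X,\LB)\to\het^4(X,\Z(2))$ is surjective. Both then use $\sum_v j_v\circ x_v^*(\al)=\sum_v j'_v\circ x_v^{\#}(\dif_X\al)$ to get $X(\BA)^{\het^3}\subset X(\BA)^{\het^4}$, with the other inclusion coming from \cref{prop: Het3 Het4 and Hnr3}.

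The genuine difference is where the surjectivity of $\dif_X$ comes from. The paper does not use Kahn's truncation triangles here. It reads surjectivity off the long exact sequence of $0\to\Z(2)\to\Q(2)\to\LB\to 0$, namely $\het^3(X,\LB)\to\het^4(X,\Z(2))\to\het^4(X,\Q(2))$. It then uses that $\het^4(X,\Q(2))=0$ for a curve, which it takes from the proof of Lemma 1.1 of Harari--Szamuely.

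You instead feed $\rmCH^2(X)=0$ and $\rmCH^2(X)\ots\QZ=0$ into \bref{diag: construct phi and psi}. This makes the statement a formal consequence of \cref{cor: H3nr is H3et for curves}, \cref{prop: comparison of functor H3 and Hnr3} and \cref{prop: comparison of funct obs with ADT obs}. Your route needs no input beyond what the paper has already set up, and it shows the corollary is really the special case of \cref{cor: H3nr is H3et for curves}.

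The paper's route is more economical in its hypothesis. Because $\het^4(X,\Q(2))\simeq\rmCH^2(X)\ots\Q$, the argument works whenever $\rmCH^2(X)$ is merely torsion. Your argument, as written, uses the full vanishing of $\rmCH^2(X)$ to make $\psi_X$ injective.

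Your closing worry is unnecessary. The term $\rmCH^2(X)$ in Kahn's sequences is $H^4_{\Zar}(X,\Z(2))$. By the definition of Bloch's cycle complex this is $\rmCH^2(X,0)$, the usual Chow group, so it vanishes for a curve.
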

\begin{proof}
Take any $(x_v)\in X(\BA)^{\het^3}$ and any $\gamma\in \het^4(X,\Z(2))$.
The coboundary map $\dif_X:\het^3(X,\LB)\to H^4_{\et}(X,\Z(2))$ is surjective
by the vanishing of $\het^4(X,\Q(2))=0$
(see the proof of \cite{HS16}*{Lemma 1.1}).
Thus $\gamma=\dif_X(\al)$ for some $\al\in \het^3(X,\LB)$.
\inpart we obtain
$\sum j_v'\circ x_v^{\#}(\gamma)
=\sum j_v'\circ x_v^{\#}(\dif_X(\al))
=\sum j_v'\circ \dif_v\circ\, x_v^*(\al)
=\sum j_v\circ x_v^*(\al)=0$.
\end{proof}

\section{Manin obstructions and commutative descents}\label{section: BM and desc}
In this section, we show that the commutative descent obstruction
is not fine enough.

\begin{para}
Let $M$ be a $K$-group of multiplicative type.
We fix a non-empty open subset $C_0\subset C$ such that $X$ admits a smooth integral $C_0$-model $\CX$.
We may shrink $C_0$ such that $M$ admits a finite \'etale $C_0$-model $\CM$.
For each $i\ge 0$, we put
\[
\BBP^i(K,M)\ce \drl_{U\subset C_0} \Big(\tstpl_{v\notin U}H^i(K_v,M)\times \tstpl_{v\in U}\het^i(\CO_v,\CM)\Big)
\subset \tstpl_{v\in C\uun}H^i(K_v,M).
\]
A limit argument (see \cite{EGAIV3}*{\S8}) shows that
$\BBP^i(K,M)$ is independent of the choice of $C_0$ and $\CM$.
By construction,
the image of the evaluation map $X(\BA)\to \prod H^i(K_v,M)$ lies in $\BBP^i(K,M)$.
The following theorem completes the proof of \cref{thm: introduction 1}.
\end{para}

\begin{thm}\label{thm: comparing Het and various descents}
Let $X$ be a smooth integral $K$-variety.
\benumr
\itm\label{thm: comparing Het and various descents 1}
Let $M$ be a $K$-group of multiplicative type.
For any $i\ge 1$ and $\al\in \het^i(X,M)$, we have
\[
X(\BA)^{\hnr^3}\subset X(\BA)^{\al}. 
\]

\itm\label{thm: comparing Het and various descents 2}
Let $F$ be a finite commutative $K$-group.
For any $i\ge 0$ and $\al\in \het^i(X,F)$, we have
\[
X(\BA)^{\het^3}\subset X(\BA)^{\al}. 
\]
\inpart the unramified obstruction $X(\BA)^{\hnr^3}$ is finer than $X(\BA)^{\al}$.
\eenum
\end{thm}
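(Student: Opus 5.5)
Following the introduction, the plan is to use a piece of the Poitou--Tate sequence for groups of multiplicative type. This piece identifies the image of $\Delta:H^i(K,M)\to \BBP^i(K,M)$ as the annihilator of some global group under the sum of local cup-product pairings. For the dual object I take $\hat M\ce \hom(M,\LB)$ when $M$ is finite, and otherwise the appropriate motivic dual (roughly $\hat M\ots\Z(1)[1]$, with $\hat M$ the character module). The key property is that the local pairings between $H^i(K_v,M)$ and $H^{3-i}(K_v,\hat M)$, or $H^{4-i}$ in the motivic normalization, land in $H^3(K_v,\LB)\simeq\QZ$ via $j_v$.

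\textbf{Part (ii), finite $F$, $i\ge 0$.}
\begin{enumerate}
\item Let $F'=\hom(F,\LB)$. By \cite{HSS15}*{Theorem 3.3} there is an exact sequence
\[
H^i(K,F)\to \BBP^i(K,F)\to H^{3-i}(K,F')^D .
\]
Hence a family $(a_v)$ lies in $\Im\Delta$ if and only if $\sum_v j_v(a_v\cup \beta_v)=0$ for every $\beta\in H^{3-i}(K,F')$.
\item Let $(x_v)\in X(\BA)^{\het^3}$ and $\al\in\het^i(X,F)$. For $\beta\in H^{3-i}(K,F')$, pull $\beta$ back to $X$ along the structure map and form $\gamma\ce\al\cup\beta\in\het^3(X,\LB)$.
\item Functoriality of cup products gives $x_v^*(\gamma)=x_v^*(\al)\cup\beta_v$.
\item Therefore $\sum_v j_v(x_v^*\al\cup\beta_v)=\lip(x_v),\gamma\rip=0$, which forces $(x_v^*\al)\in\Im\Delta$.
\item The statement about $\hnr^3$ then follows from \cref{prop: Het3 Het4 and Hnr3}, using $X(\BA)^{\hnr^3}\subset X(\BA)^{\het^3}$.
\end{enumerate}

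\textbf{Part (i), $M$ of multiplicative type.}
\begin{enumerate}
\item Take the Poitou--Tate sequences of \cite{HS16} for tori and groups of multiplicative type. Their middle piece characterizes $\Im(H^i(K,M)\to\BBP^i(K,M))$ as the orthogonal of $H^{j}(K,\hat M\ots^{\BL}\Z(1)[1])$ or a similar complex, for the complementary degree $j$. The pairing goes into $H^4(K_v,\Z(2))\simeq H^3(K_v,\LB)$.
\item As in part (ii), cup $\al$ with the pullback of a global class $\beta$. This produces $\gamma\in\het^4(X,\Z(2))$ satisfying $\sum_v j_v'(x_v^{\#}\gamma)=\sum_v j_v'(x_v^*\al\cup\beta_v)$.
\item Since $X(\BA)^{\hnr^3}=X(\BA)^{\het^4}$ by \cref{prop: Het3 Het4 and Hnr3}, the left-hand side vanishes for $(x_v)\in X(\BA)^{\hnr^3}$, so $(x_v^*\al)\in\Im\Delta$.
\end{enumerate}
The $\Z(2)$-version is what we need here, because for non-finite $M$ the cup product naturally lands in $\het^4(X,\Z(2))$ rather than in $\het^3(X,\LB)$.

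\textbf{Reductions.} For degrees $i$ outside the range where the sequence is available, I would check vanishing directly. For large $i$, the groups $H^i(K_v,M)$ vanish because the local fields $K_v$ have cohomological dimension $3$, so the condition becomes trivial. Alternatively, I would use the dévissage $0\to M_{\rm tors}\to M\to T\to 0$ to reduce to the torus and finite cases.

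\textbf{Main obstacle.} I expect the hardest step to be the compatibility of the local duality pairings for multiplicative groups with the cup product on $X$ and with the local invariants $j_v'$. Concretely, one has to identify the Harari--Szamuely pairing $H^i(K_v,M)\times H^{j}(K_v,\hat M\ots^{\BL}\Z(2)[\ast])\to H^4(K_v,\Z(2))$ with a global cup product that is defined on $X$ and commutes with the pullbacks $x_v^{\#}$. I would handle this by writing $M$ as a two-term complex of tori, $M\simeq[T_1\to T_2]$, and defining the pairing at the level of complexes of étale sheaves on $X$. Functoriality under $x_v:\e K_v\to X$ would then be automatic, and the signs and the finite-sum issue would be handled as in \bref{eq: BM pairing for Hnr}.
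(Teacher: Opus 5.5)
Your strategy is the paper's. In (ii) you invoke \cite{HSS15}*{Theorem 3.3} and cup $\al$ with the pullback of $\beta\in H^{3-i}(K,F')$ into $\het^3(X,\LB)$. In (i) you use the Poitou--Tate piece for $M$ with dual $M'\simeq\hat M\ots^{\mathbf{L}}\Z(1)[1]$; this is the paper's $[T_2'\to T_1']$, which it takes from \cite{Tia21PT}*{Example 4.13(2)} rather than \cite{HS16}. You then cup into $\het^4(X,\Z(2))$ and conclude from $X(\BA)^{\hnr^3}=X(\BA)^{\het^4}$. Part (ii) is fine.

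The gap is in (i) for $i=2$. In that degree the duality only gives exactness of
\[
H^2(K,M)\xrightarrow{\ \Delta\ }\BBP^2(K,M)_{\mrm{tors}}\xrightarrow{\ \Sigma\ }H^0(K,M')^D .
\]
The reason is that $\BBP^2(K,M)$ is not torsion, whereas $\Im\Delta$ is. For instance, for $M=\gm$ it contains $\prod_v\Br(\CO_v)\simeq\prod_v\Br(\kappa(v))$, and each factor is $\QZ$. So your claim that the middle piece identifies $\Im\Delta$ with the orthogonal of $H^0(K,M')$ is not available in degree $2$. Orthogonality of $(x_v^*\al)$ to $H^0(K,M')$ gives $(x_v^*\al)\in\Im\Delta$ only once you know that $(x_v^*\al)$ is torsion.

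The paper supplies this by proving that $H^2(X,M)$ is torsion:
\begin{enumerate}
\item $\Br(X)$ is torsion because $X$ is smooth \cite{CTS21}*{Lemma 3.5.2};
\item hence $H^2(X,M^{\circ})$ is torsion, by restriction--corestriction from a splitting field of the torus $M^{\circ}$;
\item then $H^2(X,M)$ is torsion by the connected--\'etale sequence $0\to M^{\circ}\to M\to\pi_0(M)\to 0$.
\end{enumerate}
You need to add this step; it uses the smoothness of $X$ in an essential way.

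Three minor remarks.
\begin{enumerate}
\item With $M'=\hat M\ots^{\mathbf{L}}\Z(1)[1]$ the complementary degree is $2-i$, not $4-i$ as in your opening paragraph. The pairing is $H^i(K_v,M)\times H^{2-i}(K_v,M')\to H^4(K_v,\Z(2))$.
\item The alternative d\'evissage $0\to M_{\mrm{tors}}\to M\to T\to 0$ is not a formal reduction. Knowing that the image of $(x_v^*\al)$ in $\prod_v H^i(K_v,T)$ is global leaves a discrepancy in $\prod_v H^i(K_v,M_{\mrm{tors}})$. That discrepancy need not be the evaluation of any class on $X$, so the finite case cannot be applied to it. The direct argument does not need this reduction.
\item Your handling of $i\ge 4$ through the vanishing of $H^i(K_v,M)$ (cohomological dimension $3$) is correct. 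It covers a case the paper leaves implicit.
\end{enumerate}
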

\begin{proof}
Take any $(x_v)\in X(\BA)$.
\benumr
\itm
Embed $M$ into a short exact sequence $0\to M\to T_1\to T_2\to 0$ for some $K$-torus $T_1, T_2$.
Let $T_1', T_2'$ be the $K$-tori whose modules of characters are the modules of cocharacters of $T_1, T_2$,
respectively.
Let $M'=[T_2'\to T_1']$ be the complex concentrated in degree $-1$ and $0$.
By \cite{Tia21PT}*{Example 4.13(2)}, we have an exact sequence
\beq\label{seq: cup product for group of mult type}
\xm{
H^i(K,M)\ar[r]^-{\Delta} & \BBP^i(K,M)\ar[r]^-{\Sigma} & H^{2-i}(K,M')^D
}
\eeq
for $i=1,3$, and an exact sequence
\beq\label{seq: cup product for group of mult type 2}
\xm{
H^2(K,M)\ar[r]^-{\Delta} & \BBP^2(K,M)_{\mrm{tors}}\ar[r]^-{\Sigma} & H^{0}(K,M')^D,
}
\eeq
where $\Sigma$ is induced by the cup-product pairing
\[
H^i(K_v,M)\times H^{2-i}(K_v,M')\to H^4(K_v,\Z(2))\simeq \QZ.
\]
Take any $\al\in \het^i(X,M)$ and any $(x_v)\in X(\BA)$.
For $i=2$, the connected-\'etale exact sequence $0\to M^{\circ}\to M\to \pi_0(M)\to 0$
yields an exact sequence of abelian groups
\[
H^2(X,M^{\circ})\to H^2(X,M)\to H^2(X,\pi_0(M)).
\]
Recall \cite{CTS21}*{Lemma 3.5.2} that $\Br(X)$ is torsion,
so a restriction-corestriction argument shows that $H^2(X,M^{\circ})$ is also torsion.
So $H^2(X,M)$ is torsion by d\'evissage.
Hence we conclude $(x_v^*\al)\in \BBP^2(X,M)_{\mrm{tors}}$.
Now due to \bref{seq: cup product for group of mult type} and \bref{seq: cup product for group of mult type 2},
$(x_v)\in X(\BA)^{\al}$ 
is equivalent to $(x_v^*\al)\in\Im\Delta= \ker\Sigma$, i.e.,
for any $\beta'\in H^{2-i}(K,M')$ with
image $\beta_v'\in H^{2-i}(K_v,M')$,
we have $\sum_{v\in C\uun} j_v'((x_v^*\al)\cup \beta_v')=0$.
By the functoriality, 
$(x_v^*\al)\in\Im\Delta$ amounts to $\sum_{v\in C\uun} j_v'\circ x_v^{\#}(\al\cup \beta'_X)=\sum_{v\in C\uun} j_v'((x_v^*\al)\cup \beta_v')=0$,
where $\beta'_X\in \het^{2-i}(X,M')$ is the image of $\beta'$.

\itm
Let $F'\ce \ul{\hom}(F,\LB)$ be the dual of $F$.
By \cite{HSS15}*{Theorem 3.3}, we obtain an exact sequence of abelian groups
\[
\xm{
H^i(K,F)\ar[r]^-{\Delta} & \BBP^i(K,F)\ar[r]^-{\Sigma} & H^{3-i}(K,F')^D
}
\]
for each $i\ge 0$,
where $\Sigma$ is given by the cup-product pairing (see \cite{HS16}*{p.~578, (10)})
\[
H^i(K_v,F)\times H^{3-i}(K_v,F')\to H^3(K_v,\LB)\simeq \QZ.
\]
Now a similar argument as in $(1)$ implies $X(\BA)^{\het^3}\subset X(\BA)^{\al}$.
\qedhere
\eenum
\end{proof}

\begin{cor}
Let $G$ be a commutative linear $K$-group and let $f:Y\to X$ be any $G$-torsor.
We have
\[
X(\BA)^{\hnr^3}\subset X(\BA)^f.
\]
\inpart we have $X(\BA)^{\hnr^3}\subset X(\BA)^{\mrm{comm}}$, 
where 
\[
X(\BA)^{\mrm{comm}}\ce \tst\bigcap\limits_{M}\bigcap\limits_{f}\bigcup\limits_{[\sg]\in H^1(K,M)}f_{\sg}(Y_{\sg}(\BA))
\]
with $M$ running over all such groups and
$f:Y\to X$ running over all $M$-torsors.
\end{cor}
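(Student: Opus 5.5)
The plan is to reduce the corollary to \cref{thm: comparing Het and various descents}\ref{thm: comparing Het and various descents 1} with $i=1$, using the twisting description of the descent obstruction. By the paragraph on descent obstructions, for a $G$-torsor $f:Y\to X$ with class $[Y]\in\het^1(X,G)$ we have $X(\BA)^f=X(\BA)^{[Y]}$, where the right-hand side is the set from \bref{para: obstructions from functors} attached to the functor $\het^1(-,G)$ and the element $[Y]$. It therefore suffices to show $X(\BA)^{\hnr^3}\subset X(\BA)^{[Y]}$ for every class in $\het^1(X,G)$ with $G$ commutative linear.

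If $G$ is of multiplicative type, this is exactly \cref{thm: comparing Het and various descents}\ref{thm: comparing Het and various descents 1} with $M=G$ and $i=1$. For a general commutative linear $G$, I will use that $\tz(K)=0$. Then $G\simeq M\times U$ with $M$ of multiplicative type and $U$ a split unipotent commutative group, i.e.\ a vector group $\BG_a^r$. The group $\het^1(X,U)$ is quasi-coherent cohomology. It need not vanish, but the evaluation lands in $H^1(K_v,U)=0$ and $H^1(K,U)=0$. Hence, under $\het^1(X,G)=\het^1(X,M)\times\het^1(X,U)$, the condition that $(x_v^*[Y])$ lies in the image of $\Delta$ only involves the $M$-component $\al_M$. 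This gives $X(\BA)^{[Y]}=X(\BA)^{\al_M}$, and the previous case applies.

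The point needing care is compatibility of the product decomposition with the local and global maps. Both $\het^1(-,G)$ and the maps $\Delta$ and $x_v^*$ are additive in $G$, so $\Im\Delta$ decomposes as a product and the $U$-factor is trivial. If the splitting $G\simeq M\times U$ is not available in the needed generality, I would instead use the exact sequence $0\to U\to G\to M\to 0$. Since $H^1(K,U)=H^2(K,U)=0$ and likewise over each $K_v$, we get $H^1(K,G)\simeq H^1(K,M)$ and $H^1(K_v,G)\simeq H^1(K_v,M)$, compatibly with $\Delta$. Pushing $[Y]$ forward to $\het^1(X,M)$ then gives the same condition on $(x_v)$.

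The second statement follows formally. The set $X(\BA)^{\mrm{comm}}$ is the intersection of the sets $\bigcup_{[\sg]}f_{\sg}(Y_{\sg}(\BA))=X(\BA)^f$ over all commutative linear $M$ and all $M$-torsors $f$. Each of these contains $X(\BA)^{\hnr^3}$ by the first part, so the intersection does too.
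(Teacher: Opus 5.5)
Your proposal is correct and follows the paper's proof. The paper also passes from $G$ to the multiplicative-type quotient $M=G/\rad^u(G)$, using $H^1(K,\rad^u(G))=H^2(K,\rad^u(G))=0$ (since $\tz(K)=0$), and then applies \cref{thm: comparing Het and various descents}\ref{thm: comparing Het and various descents 1} with $i=1$ through $X(\BA)^f=X(\BA)^{[Y]}$. Your check that the bijections $H^1(K,G)\simeq H^1(K,M)$ and $H^1(K_v,G)\simeq H^1(K_v,M)$ are compatible with $\Delta$ and with evaluation makes precise the paper's brief ``we may assume $G$ is of multiplicative type.''
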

\begin{proof}
Let $\rad^u(G)$ be the unipotent radical of $G$ and let $M=G/\rad^u(G)$
which is a group of multiplicative type.
Since $H^1(K,\rad^u(G))=H^2(K,\rad^u(G))=1$ are trivial (recall $\tz(K)=0$),
we deduce $H^1(K,G)\simeq H^1(K,M)$.
So we may assume that $G$ is a group of multiplicative type.
Therefore the desired assertion follows from
\cref{thm: comparing Het and various descents}\ref{thm: comparing Het and various descents 1}.
\end{proof}

We conclude this section with the following result which slightly improves the above one.

\begin{prop}
Let $G$ be a $K$-group of multiplicative type.
Let $f:Y\to X$ be a $G$-torsor
and let $f^{\#}:\het^4(X,\Z(2))\to \het^4(Y,\Z(2))$ be the induced homomorphism.
We have
\[
X(\BA)^{\ker f^{\#}}\subset X(\BA)^f.
\]
\end{prop}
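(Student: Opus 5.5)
The plan is to reuse the argument from the proof of \cref{thm: comparing Het and various descents}\ref{thm: comparing Het and various descents 1} in the case $i=1$, applied to $\al=[Y]\in \het^1(X,G)$. The key new observation is that the classes in $\het^4(X,\Z(2))$ which detect $X(\BA)^f$ are cup products with $[Y]$, and these are killed by $f^{\#}$.

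Fix, as in that proof, an exact sequence $0\to G\to T_1\to T_2\to 0$ with $T_1,T_2$ tori. Put $G'=[T_2'\to T_1']$ for the dual complex and use the cup-product pairing $G\ots^{\mathbf{L}}G'\to \Z(2)[\,\cdot\,]$ that lands in degree $4$ after pairing $H^1$ with $H^1$, as in \bref{seq: cup product for group of mult type}. By the twisting description in the paragraph on descent obstructions, $(x_v)\in X(\BA)^f$ if and only if $(x_v^*[Y])\in\Im\Delta$ inside $\BBP^1(K,G)$. The exact sequence \bref{seq: cup product for group of mult type} for $i=1$ turns this into the following condition: for every $\beta'\in H^1(K,G')$ we have
\[
\tstsl_{v\in C\uun} j_v'\big(x_v^*[Y]\cup\beta'_v\big)=0 .
\]
By functoriality of cup products under the points $x_v$, the left-hand side equals $\sum_v j_v'\circ x_v^{\#}\big([Y]\cup\beta'_X\big)$, where $\beta'_X\in \het^1(X,G')$ is the pullback of $\beta'$ along the structure map $X\to\e K$. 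So it suffices to show that $\gamma\ce[Y]\cup\beta'_X\in \het^4(X,\Z(2))$ lies in $\ker f^{\#}$. Then any $(x_v)\in X(\BA)^{\ker f^{\#}}$ satisfies $\sum_v j'_v\circ x_v^{\#}(\gamma)=0$ for all such $\gamma$, hence lies in $X(\BA)^f$.

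The vanishing follows from naturality of the cup product under $f:Y\to X$:
\[
f^{\#}\big([Y]\cup\beta'_X\big)=f^*[Y]\cup f^*\beta'_X=f^*[Y]\cup\beta'_Y .
\]
Here $f^*[Y]\in\het^1(Y,G)$ is the class of the pulled-back torsor $Y\times_XY\to Y$. This torsor has the diagonal section, so it is trivial, and therefore $f^*[Y]=0$ and $\gamma\in\ker f^{\#}$.

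I expect the main obstacle to be making precise that the global cup product $\het^1(X,G)\times\het^1(X,G')\to\het^4(X,\Z(2))$ exists for the étale motivic complex. It must be functorial in the scheme, both for $f$ and for the points $x_v:\e K_v\to X$, and it must specialize over $K_v$ to the local pairing used in \cite{Tia21PT}*{Example 4.13(2)}. I would obtain it from a morphism $G\ots^{\mathbf{L}}G'\to\Z(2)[3]$ in the derived category of étale sheaves on $\e K$, built from $T\ots T'\to\gm\ots\gm[\,\cdot\,]\to\Z(2)[2]$ via the product $\Z(1)\ots\Z(1)\to\Z(2)$ with $\Z(1)\simeq\gm[-1]$. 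I would then pull this morphism back to any $K$-scheme, which makes functoriality automatic and gives compatibility with the local pairings by construction.
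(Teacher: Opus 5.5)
Your proposal is correct and matches the paper's proof. The paper likewise sets $\CK_f=\{[Y]\cup a_X \zc a\in H^1(K,G')\}$, uses \bref{seq: cup product for group of mult type} with $i=1$ to get $X(\BA)^{\CK_f}=X(\BA)^f$, and deduces $\CK_f\subset\ker f^{\#}$ from the triviality of the torsor pulled back to $Y$. One small slip in your closing remark: with $G'$ in degrees $-1,0$, the pairing giving $H^1\times H^1\to H^4$ is $G\ots^{\mathbf{L}}G'\to\Z(2)[2]$, not $\Z(2)[3]$, which is consistent with the torus-level map $T\ots T'\to\Z(2)[2]$ you wrote.
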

\begin{proof}
Put
\[
\CK_f\ce \{ [Y]\cup a_X \zc a\in H^1(K,G')\}\subset \het^4(X,\Z(2)).
\]
Then \bref{seq: cup product for group of mult type} implies the equivalence of
$(x_v)\in X(\BA)^f$ and $(x_v)\perp \CK_f$,
i.e., $X(\BA)^{\CK_f}=X(\BA)^f$.
Since $f:Y\to X$ becomes a trivial $G$-torsor over $Y$,
the commutativity of
\[
\xm{
H^1(X,G)\ar[d] \ar@{}[r]|-{\bigtimes} & H^1(X,G')\ar[r]\ar[d] & \het^4(X,\Z(2))\ar[d]\\
H^1(Y,G) \ar@{}[r]|-{\bigtimes} & H^1(Y,G')\ar[r] & \het^4(Y,\Z(2))
}
\]
implies $\CK_f\subset \ker f^{\#}$.
Thus we conclude
$
X(\BA)^{\ker f^{\#}}\subset X(\BA)^{\CK_f}=X(\BA)^f.
$
\end{proof}

\section{Comparison via an explicit example}\label{section: BM and nr for versal torsors}
This section is devoted to the construction of a smooth integral $K$-variety $X$ such that
\[
X(\BA)^{\hnr^3}\subsetneq X(\BA)^{\het^3}=X(\BA).
\]

\begin{construction}
Let $G$ be a semi-simple simply connected group over $K$.
Let $G\to \SL_n$ be a $K$-embedding for some integer $n\ge 1$.
By enlarging $n$ if necessary,
there exists a closed subvariety $Z\subset \BBA^{n}_K$ of $\codim(Z,\BBA^n_K)\ge 3$ such that
$G$ acts freely on $Y\ce \BBA^{n}_K\setminus Z$.
So we obtain a $G$-torsor $Y\to X\ce Y/G$.
Note that $X(K)\ne \es$.
\end{construction}

We first show $\het^3(X,\LB)\simeq H^3(K,\LB)$
which implies $X(\BA)^{\het^3}=X(\BA)$.
Let $\olK$ be a fixed algebraic closure of $K$.
Write $\ol{X}\ce X\times_K\olK$, and similarly for $\ol{Y}, \ol{G}$.

\begin{lem}\label{lem: cohomology of simply connected Y}
We have $\het^i(Y,\LB)\simeq H^i(K,\LB)$ and $\het^i(\ol{Y},\LB)=0$ for $1\le i\le 3$.
\end{lem}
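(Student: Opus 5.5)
The plan is to compute the geometric cohomology first and then descend with Hochschild--Serre. Write $\ol{Y}=\BBA^n_{\olK}\setminus\ol{Z}$ with $\codim(\ol{Z},\BBA^n_{\olK})\ge 3$. By homotopy invariance of étale cohomology with torsion coefficients prime to the characteristic (here $\tz(K)=0$), we have $\het^i(\BBA^n_{\olK},\mu_m^{\ots2})=0$ for $i\ge 1$.

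For the geometric vanishing, I would use the localization (Gysin) sequence for the pair $(\BBA^n_{\olK},\ol{Z})$:
\[
\cdots\to H^i_{\ol{Z}}(\BBA^n_{\olK},\mu_m^{\ots2})\to \het^i(\BBA^n_{\olK},\mu_m^{\ots2})\to \het^i(\ol{Y},\mu_m^{\ots2})\to H^{i+1}_{\ol{Z}}(\BBA^n_{\olK},\mu_m^{\ots2})\to\cdots
\]
The key input is the cohomological purity bound $H^j_{\ol{Z}}(\BBA^n_{\olK},\mu_m^{\ots2})=0$ for $j<2c$, where $c=\codim\ge 3$. To get this bound when $\ol{Z}$ is singular, I would stratify $\ol{Z}$ by smooth locally closed pieces, each of codimension $\ge3$. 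On each piece, absolute purity (Gabber) gives vanishing below $2\cdot\codim\ge 6$, and an induction over the stratification using the excision long exact sequences propagates this to $\ol{Z}$. This yields $\het^i(\ol{Y},\mu_m^{\ots2})\simeq \het^i(\BBA^n_{\olK},\mu_m^{\ots2})=0$ for $1\le i\le 4$. Passing to the direct limit over $m$ gives $\het^i(\ol{Y},\LB)=0$ for $1\le i\le 3$ (indeed up to $4$), and $\het^0(\ol{Y},\LB)=\LB(\olK)$, since $\ol{Y}$ is connected (nonempty open in affine space).

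For the arithmetic statement, I would apply the Hochschild--Serre spectral sequence
\[
E_2^{p,q}=H^p(K,\het^q(\ol{Y},\LB))\Rightarrow \het^{p+q}(Y,\LB).
\]
Since $E_2^{p,q}=0$ for $1\le q\le 3$, the edge map $H^i(K,\LB)=E_2^{i,0}\to \het^i(Y,\LB)$ is an isomorphism for $i\le 3$. Surjectivity holds because the only nonzero term on the diagonal $p+q=i\le3$ is $E_2^{i,0}$. For injectivity, differentials landing in $E^{i,0}$ come from $E^{i-r,r-1}$ with $1\le r-1\le 2$, which vanish. Alternatively, injectivity follows directly because $Y(K)\ne\es$: $K$-points are dense in $\BBA^n_K\setminus Z$ as $K$ is infinite, so a rational point gives a section of the structure map.

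The step I expect to be the main obstacle is the purity estimate for a possibly singular closed $Z$. The stratification argument should settle it, since in characteristic zero $\ol{Z}_{\mathrm{red}}$ admits a finite stratification by smooth subvarieties. The remaining steps are formal.
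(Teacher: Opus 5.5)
Your argument is correct, but it is organized differently from the paper's.

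The paper does not pass through $\olK$ to get the arithmetic statement. It takes the vanishing $H^i_Z(\BBA^n_K,\LB)=0$ for $i\le 4$ directly from the purity theorem of SGA4, applied over $K$ itself. The localization sequence then gives $\het^i(Y,\LB)\simeq\het^i(\BBA^n_K,\LB)$, and homotopy invariance relative to the base $\e K$ identifies this with $H^i(K,\LB)$. The geometric vanishing is the same argument run over $\olK$.

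You instead prove only the geometric vanishing and then descend by Hochschild--Serre. For this you use $\het^q(\olY,\LB)=0$ for $1\le q\le 3$ and $\het^0(\olY,\LB)=\LB$, which holds because $\olY$ is connected. Your check that no differential enters or leaves $E^{i,0}$ for $i\le 3$ is right. Your remark that $Y(K)\neq\emptyset$ gives injectivity is valid, though redundant.

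What each route buys:
\begin{itemize}
\item The paper's route is shorter and uniform in the base field. Purity and homotopy invariance with torsion coefficients hold over $K$ just as well as over $\olK$ in characteristic $0$, so no spectral sequence is needed.
\item Your route separates the geometric input from the Galois descent. Through the stratification and the excision sequences for $Z'\subset Z$, it also makes explicit how vanishing of local cohomology for a possibly singular $Z$ reduces to the smooth case, which the paper leaves inside a citation.
\end{itemize}
For smooth strata over $\olK$, the classical purity theorem for smooth pairs already suffices, so you do not need Gabber's absolute purity.
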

\begin{proof}
Since $\codim(Z,\BBA^n_K)\ge 3$,
we see that $H^i_Z(\BBA^n_K,\LB)=0$ for $i\le 4$ by purity
(see \cite{SGA4III}*{Expos\'e XIX, Th\'eor\`eme 3.2}).
Subsequently, by \cite{SGA4II}*{Expos\'e V, Proposition 6.5} there is a long exact sequence
\[
\cdots\to H^i_Z(\BBA^n_K,\LB)\to \het^i(\BBA^n_K,\LB)\to \het^i(Y,\LB)\to H^{i+1}_Z(\BBA^n_K,\LB)\to \cdots
\]
which implies $\het^i(\BBA^n_K,\LB)\simeq \het^i(Y,\LB)$ for $1\le i\le 3$.
But $\LB$ is torsion, so homotopy invariance (\cite{SGA4III}*{Expos\'e XV, Corollaire 2.2}) tells us $\het^i(\BBA^n_K,\LB)\simeq H^i(K,\LB)$.
Therefore we deduce $\het^i(Y,\LB)\simeq H^i(K,\LB)$ for $1\le i\le 3$.
Similarly, $\het^i(\ol{Y},\LB)\simeq H^i(\olK,\LB)=0$ holds for $1\le i\le 3$.
\end{proof}

Next, we show $\het^3(\ol{X},\LB)=0$ by proving that it is a subgroup of $\het^3(\ol{Y},\LB)=0$.
To this end, we introduce the \v{C}ech spectral sequence associated to the covering $\ol{Y}\to \ol{X}$.
As we shall see in the proof of \cref{lem: vanishing in Cech spec sequences},
the simply connected assumption on $G$ plays a role.

\begin{construction}
Consider the \v{C}ech spectral sequence for the covering $f:\ol{Y}\to \ol{X}$
\beq\label{eq: Cech spec seq}
E^{i,j}_2\ce \hc{i}(\ol{Y}/\ol{X},\sH_{\et}^j(\LB))\Rightarrow \het^{i+j}(\ol{X},\LB),
\eeq
where $\sH_{\et}^j(\LB)$ is the \'etale presheaf $-\mt \het^j(-,\LB)$ on $\ol{X}$.
By definition, the $E^{i,j}_2$-term of \bref{eq: Cech spec seq} is computed by the complex
$H^j(\olY^{\bullet},\LB)$,
where $\olY^0\ce \ol{Y}$ and $\olY^{d+1}\ce \olY^d\times_{\olX}\olY$.
For each $n\ge 1$, there is a canonical isomorphism of varieties
\[
\olY\times_{\olK}\ol{G}^d\simeq \olY^{d+1},\ (y,\seq{g}{d})\mt (y,y.g_1,y.g_1g_2,\dots,y.g_1\cdots g_d).
\]
Thus $E^{i,j}_2$ is the cohomology at $H^j(\olY\times_{\olK}\ol{G}^i,\LB)$
of the complex $H^j(\olY\times_{\olK}\ol{G}^{\bullet},\LB)$.
\end{construction}

\begin{lem}\label{lem: vanishing in Cech spec sequences}
Let $E^{i,j}_2$ be the $(i,j)$-term in the spectral sequence \bref{eq: Cech spec seq}.
We have $E^{i,0}_2=0$ for each $i\ge 1$, and
$E^{i,j}_2=0$ for any $i\ge 0$ and each $j=1,2$.
\end{lem}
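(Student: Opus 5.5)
We compute each row of $E_2$ via a Künneth computation of $H^j(\olY\times_{\olK}\ol{G}^i,\LB)$ for $j\le 2$. The central input is the low-degree cohomology of $\ol{G}$: since $\ol{G}$ is semi-simple and simply connected over the algebraically closed field $\olK$ of characteristic zero, it has $\olK[\ol{G}]^\times=\olK^\times$, $\Pic(\ol{G})=0$ and $\Br(\ol{G})=0$. Hence $\het^1(\ol{G},\mu_n^{\ots 2})=0$ and $\het^2(\ol{G},\mu_n^{\ots 2})=0$ by the Kummer sequence, and the same holds after passing to the limit for $\LB$. One could alternatively deduce this from the fact that $\ol{G}$ is $2$-connected topologically (the comparison theorem allows reducing to $\mathbb{C}$). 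By \cref{lem: cohomology of simply connected Y} we also have $\het^j(\olY,\LB)=0$ for $j=1,2,3$.

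Next we apply a Künneth formula for étale cohomology with $\Z/n$-coefficients over an algebraically closed field of characteristic zero. This gives $H^*(\olY\times\ol{G}^i,\Z/n)\simeq H^*(\olY,\Z/n)\ots^{\BL}H^*(\ol{G},\Z/n)^{\ots^{\BL} i}$. Since every factor has $H^0=\Z/n$ and vanishing $H^1$ and $H^2$ (twists are harmless over $\olK$), we obtain $H^j(\olY\times\ol{G}^i,\LB)=0$ for $j=1,2$ and every $i\ge 0$. The Tor terms only contribute in degrees $\ge 4$, because the first nonzero positive-degree groups sit in degree $3$. Passing to the direct limit over $n$ then gives $E^{i,j}_2=0$ for $j=1,2$, since the complex computing these terms is already zero termwise.

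For the row $j=0$, each $\olY\times\ol{G}^i$ is connected, because $\olY$ is an open subset of affine space and $\ol{G}$ is connected. So $H^0(\olY^{d},\LB)=\LB(\olK)$ is constant, and the Čech complex $H^0(\olY^\bullet,\LB)$ is the complex of a constant group along the simplicial scheme. Its differentials are alternating sums of the identity, namely $0,\mathrm{id},0,\mathrm{id},\dots$. This complex is therefore acyclic in positive degrees, which gives $E^{i,0}_2=0$ for $i\ge 1$.

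The main obstacle I expect is the vanishing of $\het^2(\ol{G},\LB)$. This is exactly the place where simple connectedness enters, through $\Pic(\ol{G})=0$, since $\Pic(\ol{G})$ is the character group of the fundamental group. We also need the Brauer group vanishing, via $\Br(\ol{G})=0$ for $\ol{G}$ rational over $\olK$ (Bruhat decomposition) together with $\olK[\ol{G}]^\times=\olK^\times$ by Rosenlicht. The Künneth step is standard once these inputs are in place.
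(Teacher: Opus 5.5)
Your strategy is the paper's. You show that every term $\het^j(\olY\times_{\olK}\ol{G}^i,\LB)$ vanishes for $j=1,2$, using a K\"unneth formula together with low-degree vanishing for $\olY$ and $\ol{G}$, and you treat the row $j=0$ separately. For that row the paper only quotes an external reference (Cao). Your direct computation is a correct self-contained replacement: $\olY\times_{\olK}\ol{G}^i$ is connected, so the complex is $\LB\xrightarrow{0}\LB\xrightarrow{\mathrm{id}}\LB\xrightarrow{0}\cdots$.

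Two justifications need repair, though neither breaks the proof.

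First, $\Br(\ol{G})=0$ does not follow from rationality or the Bruhat decomposition. Birational invariance of $\Br$ is a statement about smooth proper varieties, and open rational varieties can have non-zero Brauer group, e.g.\ $\Br(\BBG_{m,\olK}^2)\simeq\QZ$. Even the big cell $U^-TU\simeq\BBA^{2N}\times\BBG_m^r$ has non-zero Brauer group once $r\ge 2$. So the Kummer route needs a genuine input for $\Br(\ol{G})=0$; this is essentially what the Colliot-Th\'el\`ene--Xu reference supplies in the paper for $\het^2(\ol{G},\LB)=0$. Your topological fallback is the right argument and should be the main one:
\begin{itemize}
\item $G(\mathbb{C})$ has $\pi_1=0$ by simple connectedness and $\pi_2=0$ (true for any Lie group);
\item Hurewicz and universal coefficients then kill $H^1$ and $H^2$ with finite coefficients;
\item the comparison theorem, plus invariance of \'etale cohomology under change of algebraically closed field of characteristic $0$, transports this to $\ol{G}$.
\end{itemize}
For $\het^1$ the paper argues directly from $\pi_1^{\et}(\ol{G})=0$.

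Second, your sentence that Tor terms only contribute in degrees $\ge 4$ is fine over $\Z$ or a field. Over $\Z/n$ with $n$ not squarefree, however, $\mathrm{Tor}^{\Z/n}_k$ need not vanish for large $k$. So the $E_2$-page of the K\"unneth spectral sequence does have terms in total degree $\le 2$, and your reasoning as stated does not rule them out. There are two easy fixes:
\begin{itemize}
\item work with $n=\ell$ prime (field coefficients, no Tor terms at all), then argue by d\'evissage along $0\to\Z/\ell^{m-1}\to\Z/\ell^m\to\Z/\ell\to 0$;
\item or use, as the paper does, the explicit low-degree decomposition $\het^2(A\times B)=\het^2(A)\oplus\het^2(B)\oplus\het^1(A,\het^1(B))$ for connected $A,B$, whose cross term dies because $\het^1(\ol{G}^i,\mu_r^{\ots 2})=0$ already at finite level.
\end{itemize}

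A related small point: your K\"unneth step uses $\het^j(\olY,\mu_n^{\ots 2})=0$ at finite level, but the earlier lemma is stated only for $\LB$. Its proof (purity plus homotopy invariance) gives the finite-level statement verbatim. The paper's ordering, passing to the limit before invoking the lemma for $\olY$, avoids needing it at all.
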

\begin{proof}
By \cite{Cao24}*{pp.~643-644, (1)},
we have $E^{i,0}_2=0$ for each $i\ge 1$.
The \Kunneth formula
(see \cite{Cao23}*{Th\'eor\`eme 2.1 and p.~265~(iii)}) yields functorial decompositions
\[
\het^1(\olY\times_{\olK}\ol{G}^i,\mu_r^{\ots 2})
=\het^1(\olY,\mu_r^{\ots 2})\oplus \het^1(\ol{G}^i,\mu_r^{\ots 2}),
\]
and
\[
\het^2(\olY\times_{\olK}\ol{G}^i,\mu_r^{\ots 2})
=\het^2(\olY,\mu_r^{\ots 2})
\oplus \het^2(\ol{G}^i,\mu_r^{\ots 2})
\oplus \het^1(\olY,\het^1(\ol{G}^i,\mu_r^{\ots2})). 
\]
Note that
$\pi_1^{\et}(\olY)\simeq \pi_1^{\et}(\BBA^{N}_{\olK})=0$
by \cite{SGA1}*{Expos\'e X, Corollaire 3.3}
and $\pi_1^{\et}(\ol{G})=0$ because it is simply connected.
Thus \cite{Cao23}*{Th\'eor\`eme 2.1} and \cite{SGA1}*{Expos\'e XI, \S5, ($\ast$)} imply
\beq\label{eq: Kunneth formula in degree 1 for sssc G}
\het^1(\ol{G}^i,\mu_r^{\ots2})=\het^1(\ol{G},\mu_r^{\ots2})^{\ops i}=H^1(\pi_1^{\et}(\ol{G}),\mu_r^{\ots2})^{\ops i}=0,
\eeq
respectively.
Hence $\het^1(\olY,\het^1(\ol{G}^i,\mu_r^{\ots2}))=0$.
Taking direct limit and
applying \cref{lem: cohomology of simply connected Y} yield for any $i\ge 0$ and $j=1,2$ that
\[
\het^j(\olY\times_{\olK}\ol{G}^i,\LB)
=\het^j(\ol{G}^i,\LB).
\]
But $G$ is simply connected,
so $\het^1(\ol{G},\LB)=0$ by \bref{eq: Kunneth formula in degree 1 for sssc G}
and $\het^2(\ol{G},\LB)=0$ by \cite{CTX09}*{proof of Proposition 2.6}.
Therefore we deduce $\het^j(\ol{G}^i,\LB)=0$ by a further application of \cite{Cao23}*{Th\'eor\`eme 2.1} to $\ol{G}^i$.
\end{proof}

\begin{prop}\label{prop: vanishing of Hi of bar X}
For each $1\le i\le 3$, we have $\het^i(\olX,\LB)=0$.
\end{prop}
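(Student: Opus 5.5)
We read off the abutment $\het^{n}(\olX,\LB)$ of the \v{C}ech spectral sequence \bref{eq: Cech spec seq} for $n=i\le 3$, using the vanishing of $E_2$-terms from \cref{lem: vanishing in Cech spec sequences}. The abutment carries a finite filtration whose graded pieces are subquotients $E_\infty^{a,b}$ with $a+b=n$. By \cref{lem: vanishing in Cech spec sequences}, $E_2^{a,0}=0$ for $a\ge1$ and $E_2^{a,1}=E_2^{a,2}=0$ for all $a\ge 0$. For $n=1,2$, every term $E_2^{a,b}$ with $a+b=n$ has $b\in\{0,1,2\}$ and $a\ge 1$ whenever $b=0$, so all of them vanish. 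Hence $\het^1(\olX,\LB)=\het^2(\olX,\LB)=0$.

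For $n=3$, the terms $E_2^{3,0}$, $E_2^{2,1}$ and $E_2^{1,2}$ vanish, so the only possible contribution is $E_\infty^{0,3}$. Thus $\het^3(\olX,\LB)\simeq E_\infty^{0,3}$. Now $E_\infty^{0,3}$ is a subgroup of $E_2^{0,3}$, obtained as the intersection of the kernels of the differentials $d_r:E_r^{0,3}\to E_r^{r,4-r}$; no differential lands in the $a=0$ column. Moreover $E_2^{0,3}$ is the kernel of $\het^3(\olY,\LB)\to\het^3(\olY\times_{\olK}\ol G,\LB)$ in the \v{C}ech complex, so it is a subgroup of $\het^3(\olY,\LB)$. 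This group is $0$ by \cref{lem: cohomology of simply connected Y}. Hence $\het^3(\olX,\LB)$ embeds in $0$, which proves the case $n=3$. The composite $\het^3(\olX,\LB)\simeq E_\infty^{0,3}\hookrightarrow E_2^{0,3}\subset \het^3(\olY,\LB)$ is just the edge map $f^*$, matching the remark before the construction that $\het^3(\olX,\LB)$ is a subgroup of $\het^3(\olY,\LB)$.

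The main obstacle is justifying convergence of the \v{C}ech spectral sequence to \'etale cohomology. In general the \v{C}ech-to-derived comparison needs care, for example the vanishing of the sheafification of $\sH^j_{\et}(\LB)$ for $j\ge1$. Here $\olY\to\olX$ is a $\ol G$-torsor, so it is an fppf (indeed smooth) covering, and the paper already treats \bref{eq: Cech spec seq} as converging. I would cite the standard statement that for a smooth surjective covering the \v{C}ech spectral sequence with coefficients in the presheaves $\sH^j$ abuts to the derived-functor cohomology (cf.\ Milne, \emph{\'Etale cohomology}, III.2.7, with \'etale cohomology computed by fppf cohomology for these smooth group schemes). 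Since $\LB$ is a direct limit of the finite sheaves $\mu_r^{\ots2}$ and cohomology commutes with filtered colimits on these quasi-compact quasi-separated schemes, it suffices to argue with $\mu_r^{\ots2}$ and pass to the limit.
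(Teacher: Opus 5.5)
Your proof is correct and follows the paper's argument. Both you and the paper read off $\het^n(\olX,\LB)$ for $n\le 3$ from the \v{C}ech spectral sequence using \cref{lem: vanishing in Cech spec sequences}, and both conclude $\het^3(\olX,\LB)\simeq E^{0,3}_\infty\subset E^{0,3}_2\subset \het^3(\olY,\LB)=0$. The paper handles $n=1,2$ via the low-degree exact sequence where you use the filtration directly, which makes no difference; your justification of convergence (a single fppf or smooth covering, Milne III.2.7, and the fppf--\'etale comparison for the smooth sheaves $\mu_r^{\otimes 2}$) is a sound addition to what the paper leaves implicit.
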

\begin{proof}
The spectral sequence \bref{eq: Cech spec seq} yields an exact sequence
\[
0\to E^{1,0}_2\to H^1\to E^{0,1}_2\to E^{2,0}_2\to \ker(H^2\to E^{0,2}_2)\to E^{1,1}_2\to E^{3,0}_2,
\]
which implies $H^1=H^2=0$ after \cref{lem: vanishing in Cech spec sequences}.
Moreover, we deduce $H^3=E^{0,3}_2$ by \cref{lem: vanishing in Cech spec sequences}.
Subsequently a direct computation 
implies $E^{0,3}_2\subset H^3(\ol{Y},\LB)=0$ and $H^3\simeq E^{0,3}_2=0$.
\end{proof}

\begin{cor}
There is an isomorphism $\het^3(X,\LB)\simeq H^3(K,\LB)$.
\end{cor}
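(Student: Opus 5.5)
We will use the Hochschild--Serre spectral sequence
\[
E_2^{p,q}=H^p(K,\het^q(\olX,\LB))\Rightarrow \het^{p+q}(X,\LB)
\]
for $X$ over $K$. By \cref{prop: vanishing of Hi of bar X}, $E_2^{p,q}=0$ for $q=1,2,3$. Hence the only possibly nonzero terms of total degree $3$ are $E_2^{3,0}=H^3(K,\LB)$ and $E_2^{0,3}$, and the latter vanishes since $\het^3(\olX,\LB)=0$.

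Next we check the differentials. Those hitting $E^{3,0}$ come from $E^{1,1}_2$ and $E^{0,2}_3$, both zero because $\het^1(\olX,\LB)=\het^2(\olX,\LB)=0$. So $E_\infty^{3,0}=E_2^{3,0}=H^3(K,\LB)$, and the edge map $H^3(K,\LB)\to\het^3(X,\LB)$ is injective. The remaining graded pieces $E_\infty^{p,3-p}$ with $p<3$ are subquotients of $H^p(K,\het^{3-p}(\olX,\LB))=0$. Therefore the filtration on $\het^3(X,\LB)$ has only one nonzero piece, and the edge map, which is the pullback along the structure morphism, is an isomorphism.

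Injectivity can also be seen directly, which serves as a sanity check: since $X(K)\neq\es$ (noted in the construction), any rational point gives a section $\e K\to X$, so the pullback $H^3(K,\LB)\to\het^3(X,\LB)$ is split injective. This does not replace the spectral sequence argument; it only confirms that the edge map is injective independently.

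One technical point needs care. The Hochschild--Serre spectral sequence is usually stated for finite coefficients $\mu_n^{\ots2}$, whereas \cref{prop: vanishing of Hi of bar X} is stated for $\LB=\drl\mu_n^{\ots2}$. Two fixes are available. Either apply the spectral sequence directly to the torsion sheaf $\LB$; étale cohomology of the quasi-compact quasi-separated schemes $\olX$ and $X$ commutes with filtered colimits, and Galois cohomology does too, so the spectral sequence exists for $\LB$. Alternatively, pass to the colimit of the finite-level spectral sequences, using that filtered colimits are exact. With either route the vanishing statements of \cref{prop: vanishing of Hi of bar X} apply verbatim, and the corollary follows. Combined with the exact sequence of \cref{lem: exactess of the generalized Weil reciprocity law}, this then yields $X(\BA)^{\het^3}=X(\BA)$.
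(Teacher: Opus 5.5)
Your proposal is correct and follows essentially the same route as the paper. Both run the Hochschild--Serre spectral sequence $H^p(K,\het^q(\olX,\LB))\Rightarrow \het^{p+q}(X,\LB)$ and use \cref{prop: vanishing of Hi of bar X} to kill the terms $E^{0,3}$, $E^{1,2}$, $E^{2,1}$ and every differential into $E^{3,0}$, which gives $\het^3(X,\LB)\simeq E_\infty^{3,0}=E_2^{3,0}=H^3(K,\LB)$; your added remarks (the spectral sequence for the colimit sheaf $\LB$, and the splitting coming from a rational point) are harmless supplements that the paper leaves implicit.
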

\begin{proof}
Consider the Hochschild--Serre spectral sequence
\[
\BE^{i,j}_2\ce H^i(K,\het^j(\olX,\LB))\Rightarrow \het^{i+j}(X,\LB).
\]
\cref{prop: vanishing of Hi of bar X} yields
$\BE^{0,3}\lif=\BE^{1,2}\lif=\BE^{2,1}\lif=0$,
which implies
$H^3=F^1H^3=F^2H^3=F^3H^3\simeq \BE^{3,0}\lif=\BE^{3,0}_2=H^3(K,\LB)$.
Thus we get $\het^3(X,\LB)=H^3=F^1H^3\simeq H^3(K,\LB)$, as desired.
\end{proof}

Subsequently, we describe $\hnr^3(X,\LB)$ via cohomological invariants.
Although the following constructions concerning cohomological invariants work
for arbitrary linear algebraic groups,
we still assume that $G$ is semi-simple simply connected for context consistency.

\begin{para}
[Cohomological invariants]
Consider the following two functors
\[
H^1(-,G),\ H^d(-,\QZ(d-1)): \Field_K\to \Set
\]
from the category of extension fields of $K$ to that of sets.
Let $\Inv^d(G,\QZ(d-1))$ be the group of all natural transformations
$H^1(-,G)\to H^d(-,\QZ(d-1))$,
whose elements are called \emph{cohomological invariants} of degree $d$.
Subsequently,
let $\Inv^d(G,\QZ(d-1))_{\mrm{norm}}$ be the subgroup of invariants sending the neutral class to $0$.
For instance,
if $G$ is a connected linear group,
then $\Inv^2(G,\QZ(1))_{\mrm{norm}}\simeq \Pic(G)$ by \cite{KMRT98}*{Proposition 31.19}. 
\end{para}

Let $Y_{\eta}$ be the generic fibre of $Y\to X$ and
let $[Y_{\eta}]\in H^1(K(X),G)$ be its class.
This class defines an injective (see \cite{Mer17}*{Lemma 6.2})
evaluation map of abelian groups
\[
\Phi:\Inv^3(G,\LB)\to H^3(K(X),\LB),\ a\mt a_{K(X)}([Y_{\eta}]).
\]
Actually, $\Im\Phi$ is well understood due to the following

\begin{thm}
[\cite{GMS03}*{pp.~99-100, Appendix C} or \cite{Mer17}*{Theorem 6.3}]\label{thm: Inv3 is Hnr3}
There is an isomorphism of abelian groups
\[
\Phi:\Inv^3(G,\LB)\to H^0_{\Zar}(X,\CH^3_{\et}(\LB))=\hnr^3(X,\LB).
\]
\end{thm}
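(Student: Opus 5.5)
The plan is to show, in order, that $\Phi$ lands in $\hnr^3(X,\LB)$, that it is injective, and that every unramified class comes from an invariant. The approach is the standard one for cohomological invariants of versal torsors, following Totaro's construction and \cite{GMS03}*{Appendix C}. The key point is that $Y\to X$ is a \emph{versal} (in fact classifying) $G$-torsor: $Y$ is an open subset of affine space with complement of codimension $\ge 3$. Consequently, for every field $L/K$ with $L$ infinite and every $\xi\in H^1(L,G)$, the twisted variety ${}_{\xi}Y/G$ is rational over $L$ with $Y(L)$-points dense. Hence $\xi$ is the pullback of $[Y]$ along some $L$-point of $X$, and that point can be chosen outside any given proper closed subset.

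\emph{Step 1: image lies in unramified cohomology.} Let $a\in\Inv^3(G,\LB)$ and $P\in X$. Since $[Y_\eta]$ extends to the torsor $Y\times_X\e\CO_{X,P}$, I would use the fact that invariants of torsors over a regular local ring containing a field take values in $\het^3(\CO_{X,P},\LB)$. This is \cite{GMS03}*{Part 1, \S 11}, the specialization property of invariants along discrete valuations. It gives
\[
a_{K(X)}([Y_\eta])\in\tst\bigcap_{P\in X\uun}\het^3(\CO_{X,P},\LB)=\hnr^3(X,\LB)
\]
by \bref{eq: nr cohomology is E0n term}. Injectivity of $\Phi$ is \cite{Mer17}*{Lemma 6.2}: if $a([Y_\eta])=0$, then by versality and specialization $a(\xi)=0$ for all $\xi$ over infinite fields, and finite fields do not occur since $\tz(K)=0$.

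\emph{Step 2: surjectivity.} Given $\beta\in\hnr^3(X,\LB)$, define an invariant $a$ as follows. For $L/K$ and $\xi\in H^1(L,G)$, choose $x\in X(L)$ with $x^*[Y]=\xi$, and set $a(\xi)\ce x^{\#}(\beta)$, using the evaluation $\hnr^3(X,\LB)\to\het^3(\CO_{X,P},\LB)\to H^3(L,\LB)$ as in \bref{para: unramified cohomology}.

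The main obstacle is showing that $a(\xi)$ is independent of the choice of $x$, and this is where codimension $\ge 3$ and unramifiedness enter. I would proceed as in Totaro/GMS. Take two points $x_1,x_2$ with the same pullback $\xi$. Both factor through the twisted variety ${}_{\xi}Y/G$ over $L$, which is an open subset of affine space with complement of codimension $\ge 3$; its $L$-points lifting to ${}_{\xi}Y$ are joined by lines in $\BBA^n_L$. Pulling $\beta$ back to an open subset $U$ of a line $\BBA^1_L$, the class remains unramified on $U$. Since the complement of ${}_{\xi}Y$ has codimension $\ge 2$, a generic line avoids it entirely, so the pullback lies in $\hnr^3(\BBA^1_L,\LB)=H^3(L,\LB)$ by homotopy invariance. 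Hence evaluations at $x_1$ and $x_2$ agree. Naturality in $L$ is then clear, and $\Phi(a)=\beta$ follows by taking $L=K(X)$ and $x$ the generic point.
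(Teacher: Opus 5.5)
Your overall plan is the standard one, and after one repair it is the construction the paper recalls in \bref{para: converse of evaluation at gen torsor}. The variety you call ${}_{\xi}Y/G$ should be $W\ce (E\times_L Y_L)/G_L$ from \bref{diag: inverse of Inv3 to Hnr3 map}. This is a twist of $Y_L$, not a quotient of it. Since the action on $\BBA^n$ is linear, Hilbert 90 gives $W\simeq\BBA^n_L\setminus Z'$ with $\codim Z'\ge 3$. Your invariant $a(\xi)=x^{\#}(\beta)$ is then exactly $\Psi(\beta)_L(\xi)$, by \cref{lem: formula for Hnr evaluation}. Step 1 and injectivity are fine as citations, and only the codimension-one (DVR) case of specialization is actually used.

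The gap is in the well-definedness step. Once you lift $x_1,x_2$ to $w_1,w_2\in W(L)$, the line joining them is uniquely determined. So the true fact that a \emph{generic} line misses $Z'$ says nothing about it. That line may meet $Z'$. In that case you only know that the pullback of $\beta$ lies in $\hnr^3(U,\LB)$ for some proper open $U\subset\BBA^1_L$. That group is in general strictly larger than $H^3(L,\LB)$; compare the paper's remark $\hnr^3(\gm,\LB)/H^3(K,\LB)\simeq\Br(K)$. Hence you cannot conclude that the evaluations at $w_1$ and $w_2$ agree.

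There are two ways to repair this.
\begin{itemize}
\item \emph{Two-step chain.} Choose $w_3\in W(L)$ outside the closures of the cones over $Z'$ with vertices $w_1$ and $w_2$. These closures have dimension at most $\dim Z'+1<n$, so such a $w_3$ exists because $L$ is infinite. The lines $w_1w_3$ and $w_3w_2$ then lie in $W$, and your argument via $\hnr^3(\BBA^1_L,\LB)=H^3(L,\LB)$ applies to each of them.
\item \emph{No lines at all.} This is how the paper's $\Psi$ is set up. By \bref{eq: nr cohomology is E0n term}, $\hnr^3$ depends only on codimension-one points. Hence $\hnr^3(W,\LB)=\hnr^3(\BBA^n_L,\LB)=H^3(L,\LB)$ by homotopy invariance, i.e.\ $f_L^*$ is an isomorphism. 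So $q_L^*\beta$ is constant on $W$, and all evaluations at points of $W(L)$ coincide.
\end{itemize}
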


\begin{para}\label{para: converse of evaluation at gen torsor}
We recall the construction of the inverse of $\Phi$, denoted by
\beq\label{eq: inverse of Inv3 to Hnr3 map}
\Psi:\hnr^3(X,\LB)\to \Inv^3(G,\LB).
\eeq
For any $\beta\in \hnr^3(X,\LB)$ and
any field extension $L/K$,
let $\Psi(\beta)_L:H^1(L,G)\to H^3(L,\LB)$ be the map to be defined.
Let $[E]\in H^1(L,G)$ be the class of a $G$-torsor.
Consider the diagram
\beac\label{diag: inverse of Inv3 to Hnr3 map}
\xm{
\e L\simeq E/G_L & (E\times_LY_L)/G_L\ar[l]_-{f_L}\ar[r]\ar@/^1.5pc/[rr]^-{q_L} & Y_L/G_L\simeq X_L\ar[r] & X.
}
\eeac
By homotopy invariance \cite{Mer16red}*{p.~702}, $f_L^*:H^3(L,\LB)\to \hnr^3((E\times_LY_L)/G_L,\LB)$
is an isomorphism.
Thus we are allowed to put
\[
\Psi(\beta)_L([E])\ce (f_L^*)\ui\circ q_L^*(\beta).
\]
\end{para}

We close this section by the promised comparison $X(\BA)^{\hnr^3}\subsetneq X(\BA)$.

\begin{lem}\label{lem: formula for Hnr evaluation}
For any field extension $L/K$ and any $x\in X(L)$,
let $Y_x$ be the fibre above the image of $x:\e L\to X$ and
let $\tilde{x}^{\#}:\hnr^3(X,\LB)\to H^3(L,\LB)$ be the induced map.
For any $\beta\in \hnr^3(X,\LB)$, we have
\[
\tilde{x}^{\#}(\beta)=\Psi(\beta)_L([Y_x]).
\]
\end{lem}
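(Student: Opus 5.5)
We apply the definition of $\Psi$ in \bref{para: converse of evaluation at gen torsor} to the torsor $E\ce Y_x$ over $L$ and compare $q_L^*(\beta)$ with $\tilde{x}^{\#}(\beta)$ by exhibiting a section. Write $P\in X$ for the image of $x$. The point $x:\e L\to X$ factors through $\e\CO_{X,P}$, and by \bref{eq: nr cohomology is E0n term} the map $\tilde{x}^{\#}$ is the composite $\hnr^3(X,\LB)\to \het^3(\CO_{X,P},\LB)\to H^3(L,\LB)$. More generally, for any morphism $g:S\to X$ from a smooth integral $L$-variety (or the spectrum of a field) the pullback $g^*:\hnr^3(X,\LB)\to \hnr^3(S,\LB)$ is obtained by pulling back through local rings. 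This pullback is compatible with composition, and on $\e L$ it agrees with $\tilde{x}^{\#}$. I would first record this functoriality, which follows from the contravariance of the Zariski sheaf $\CH^3_{\et}(\LB)$ along morphisms.

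Next, I would write $W\ce (E\times_L Y_L)/G_L$ with $E=Y_x$. The fibre $Y_x=Y\times_{X}\e L$ carries the tautological $L$-point coming from the identification $Y_x\simeq Y_x$. Concretely, the diagonal-type map $Y_x\to Y_x\times_L Y_L$, $e\mapsto (e,\pi(e))$, where $\pi:Y_x\to Y_L$ is the base change of the inclusion of the fibre, is $G$-equivariant. Passing to quotients gives a morphism $s:\e L=Y_x/G_L\to W$. I would then check two compositions. First, $f_L\circ s=\mathrm{id}_{\e L}$, because $f_L$ is induced by the first projection. Second, $q_L\circ s=x$, because the second projection sends the $G$-orbit $Y_x$ to the point $x$ of $Y_L/G_L=X_L$, which maps to $x$ in $X$.

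Finally, we compute
\[
\Psi(\beta)_L([Y_x])=(f_L^*)^{-1}q_L^*(\beta)=s^*f_L^*(f_L^*)^{-1}q_L^*(\beta)=s^*q_L^*(\beta)=x^*(\beta)=\tilde{x}^{\#}(\beta).
\]
Here the second equality uses $s^*f_L^*=\mathrm{id}$, which follows from $f_L\circ s=\mathrm{id}$, together with the fact that $f_L^*$ is bijective; the remaining equalities use functoriality. Note that $s$ need not be an isomorphism for this argument; it only has to be a section of $f_L$.

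I expect the main obstacle to be the functoriality in the first step: pulling unramified classes back along the non-dominant closed point $s:\e L\to W$ and through $q_L$. This requires that pullback of $\hnr^3$ along an arbitrary morphism of smooth varieties is well defined and compatible with composition. The cleanest way is to define everything through $\het^3(\CO_{-,P},\LB)$ and the injectivity \bref{eq: nr cohomology is E0n term}, as in the Gersten/Rost cycle-module formalism. If needed, I would instead factor through the quotient by Zariski-local sections of $H^0_{\Zar}(-,\CH^3_{\et})$, which is manifestly functorial since it is a global section of a pulled-back sheaf. In that formulation every identity above reduces to the corresponding identity of schemes.
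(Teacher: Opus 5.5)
Your proof is correct, but it takes a different route from the paper's. The paper invokes the balancedness result of Blinstein--Merkurjev (BM13, \S3a and Theorem 3.4): $\mathrm{pr}_1^*\beta=\mathrm{pr}_2^*\beta$ in $H^0_{\mathrm{Zar}}((Y\times_K Y)/G,\mathcal{H}^3)$. Pulling this back along $(Y_x\times_L Y_L)/G_L\to (Y\times_K Y)/G$ gives $f_L^*\tilde{x}^{\#}(\beta)=q_L^*(\beta)$ on all of $W$. The paper then needs only the injectivity of $f_L^*$ to conclude.

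You instead use the $L$-point $s$ of $W$ induced by $e\mapsto(e,\pi(e))$. Since $f_L\circ s=\mathrm{id}$ and $f_L^*$ is bijective (which is already needed for $\Psi$ to be defined), you get $(f_L^*)^{-1}=s^*$. Hence $\Psi(\beta)_L([Y_x])=(q_L\circ s)^*\beta=x^*\beta$. In effect, composing $s$ with $(Y_x\times_L Y_L)/G_L\to (Y\times_K Y)/G$ lands on the diagonal $X\to (Y\times_K Y)/G$, where $\mathrm{pr}_1$ and $\mathrm{pr}_2$ agree tautologically. The surjectivity half of homotopy invariance then replaces the balancedness theorem away from the diagonal.

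What each route buys:
\begin{itemize}
\item Your route is more elementary and self-contained. It needs only functoriality of $H^0_{\mathrm{Zar}}(-,\mathcal{H}^3)$ along arbitrary morphisms. That is immediate from the sheaf-theoretic definition, so the obstacle you anticipate in your last paragraph is not a real one.
\item The paper's route gives the stronger identity on all of $W$ without using surjectivity of $f_L^*$ at that step. It also fits the Blinstein--Merkurjev framework, in which balanced classes are exactly those coming from invariants.
\end{itemize}

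One small point of phrasing: the tautological $L$-point lives on $W=(Y_x\times_L Y_L)/G_L$, not on $Y_x$, which need not have any $L$-point. Your explicit construction of $s$ is the right one.
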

\begin{proof}
Let $Y_{\eta}$ be the generic fibre of $Y\to X$.
Let $f_L$ and $q_L$ be as in diagram \bref{diag: inverse of Inv3 to Hnr3 map}.
Consider the following \cmdm of abelian groups
\[
\xm{
H^0_{\Zar}(X,\CH^3_{\et}(\LB))\ar[r]^-{\pr_1^*}\ar[d]_-{\tilde{x}^{\#}} 
& H^0_{\Zar}(Y\times_KY/G,\CH^3_{\et}(\LB))\ar[d]_-{(x\times\id)^*} 
& H^0_{\Zar}(X,\CH^3_{\et}(\LB))\ar[l]_-{\pr_2^*}\ar@{=}[d]\\ 
H^3(L,\LB)\ar[r]^-{f_L^*} & H^0_{\Zar}(Y_x\times_LY_L/G_L,\CH^3_{\et}(\LB)) & H^0_{\Zar}(X,\CH^3_{\et}(\LB)).\ar[l]_-{q_L^*}
}
\]
By \cite{BM13}*{Paragraph 3a and Theorem 3.4},
we conclude $\pr_1^*(\beta)=\pr_2^*(\beta)$.
Subsequently, we obtain $f_L^*\circ \tilde{x}^{\#}(\beta)=q_L^*(\beta)$,
i.e., $\tilde{x}^{\#}(\beta)=(f_L^*)\ui\circ q_L^*(\beta)=\Psi(\beta)_L([Y_x])$.
\end{proof}

\begin{para}\label{para: Rost invariant and an example from Hu}
Let $G$ be an absolutely almost-simple simply connected group over $K$.
According to \cite{KMRT98}*{Proposition 31.40}
the group $\Inv^3(G,\LB)_{\mrm{norm}}$ is cyclic of order $n_G$
with $n_G$ a constant determined by the type of $G$.
Let $\s{R}\in \Inv^3(G,\LB)_{\mrm{norm}}$ be the \emph{Rost invariant},
i.e., the canonical generator of $\Inv^3(G,\LB)_{\mrm{norm}}$.
For each $v\in C\uun$, let $\s{R}_v:H^1(K_v,G)\to H^3(K_v,\LB)$ and
define 
\[
\ker \s{R}_v\ce \{\al_v\in H^1(K_v,G)\zc \s{R}_v(\al_v)=0\}.
\]

In view of Serre's conjecture II, the sets $H^1(K_v,G)$ are typically non-trivial because of $\mrm{cd}(K_v)=3$.
On the other hand, the Rost kernel is known to be trivial for many groups by the works of \cites{Gil00, Gar01, CTPS12, Pre13, PPS18}.
Now we give an example satisfying the assumptions of \cref{thm: comparison of Hnr and Het via abs assc groups} below
which is communicated to the authors by Yong HU.

Take a hyperbolic quadratic form $q$ of dimension $8$ over $K$,
which has trivial discriminant and trivial Clifford invariant by \cite{KMRT98}*{Corollary 35.3}.
Then $H^1(K_v,\Spin(q))$ classifies quadratic forms over $K_v$ of dimension $8$
with trivial discriminant and Clifford invariant.
Since for each $v\in C\uun$ the $u$-invariant of $K_v$ is $8$ by \cite{PS10}*{Theorem 4.6} and \cite{Lee13}*{Theorem 3.4},
the existence of an anisotropic $3$-Pfister form over $K_v$
yields a non-trivial element of the set $H^1(K_v,\Spin(q))$.
Finally, $\ker\s{R}_v$ is trivial since $\Spin(q)$ is split (see \cite{Gar01}*{p.~686}).
\end{para}

\begin{thm}\label{thm: comparison of Hnr and Het via abs assc groups}
Let $G$ be an absolutely almost-simple simply connected $K$-group.
Suppose $H^1(K_v,G)\ne 1$ and $\ker\s{R}_v=1$ for infinitely many $v$.
Then we have
\[
X(\BA)^{\hnr^3}\subsetneq X(\BA).
\]
\end{thm}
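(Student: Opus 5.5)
Fix a rational point $x_0\in X(K)$ (it exists since $X(K)\ne\es$), and construct an adelic point by altering $x_0$ at a single place $w$ so that it pairs nontrivially with a suitable element of $\hnr^3(X,\LB)$. Let $\s{R}\in\Inv^3(G,\LB)_{\mrm{norm}}$ be the Rost invariant and put $\beta\ce\Phi(\s{R})\in\hnr^3(X,\LB)$, using \cref{thm: Inv3 is Hnr3}. By \cref{lem: formula for Hnr evaluation}, for every field $L/K$ and every $x\in X(L)$ we have $\tilde{x}^{\#}(\beta)=\s{R}_L([Y_x])$. It therefore suffices to find $(x_v)\in X(\BA)$ with $\sum_v j_v(\s{R}_v([Y_{x_v}]))\ne 0$.

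The first step is to realise torsor classes by local points. Since $Y$ is an open subset of $\BBA^n_K$ with a free $G$-action and $\codim Z\ge 3$ (in particular $\codim\ge 2$), the torsor $Y\to X$ is versal: for any infinite field $L\supset K$, every class in $H^1(L,G)$ arises as $[Y_x]$ for some $x\in X(L)$. The argument is that, given a $G_L$-torsor $E$, the twist ${}_E Y_L=(E\times Y_L)/G_L$ is an open subset of a twist of $\BBA^n$ under $G\subset\SL_n$. By Hilbert 90 for $\SL_n$ (or for the linear action) this twist is again $\BBA^n_L$, and it has $L$-points because $L$ is infinite and the complement has codimension $\ge 1$. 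Such an $L$-point lies over a point $x\in X(L)$ with $[Y_x]=[E]$.

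Next choose the place. Pick $w$ among the infinitely many places with $H^1(K_w,G)\ne 1$ and $\ker\s{R}_w=1$, and choose $\al_w\ne 1$ in $H^1(K_w,G)$. Then $\s{R}_w(\al_w)\ne 0$, and also $\s{R}_w$ of the trivial class vanishes because $\s{R}$ is normalized. By versality, pick $x_w\in X(K_w)$ with $[Y_{x_w}]=\al_w$. Define $(x_v)$ by $x_v=x_0$ for $v\ne w$ and $x_w$ at $w$; this is an adelic point, since changing a single component preserves integrality almost everywhere. Because $x_0\in X(K)\subset X(\BA)^{\hnr^3}$, we have $\sum_v j_v(\tilde{x}_{0,v}^{\#}\beta)=0$. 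Subtracting this, the pairing $\lip (x_v),\beta\rip_{\nr}$ equals
\[
j_w\big(\s{R}_w(\al_w)\big)-j_w\big(\s{R}_w([Y_{x_0}]_{K_w})\big),
\]
where $j_w$ is injective.

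This difference is where I expect the main obstacle, since it could vanish if $[Y_{x_0}]_{K_w}$ has the same Rost value as $\al_w$. Injectivity of $\s{R}_w$ is only known at the trivial class, so I would handle this by the choice of $x_0$. One option is to choose $x_0$ in the image of $Y(K)\to X(K)$, which is nonempty since $Y(K)\ne\es$; then $[Y_{x_0}]$ is trivial and the second term is $0$. The pairing is then $j_w(\s{R}_w(\al_w))\ne 0$, so $(x_v)\notin X(\BA)^{\hnr^3}$. This gives $X(\BA)^{\hnr^3}\subsetneq X(\BA)$.

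The hypothesis that the condition holds for infinitely many places is then more than needed; one good place suffices, and the extra places only give flexibility, for instance to avoid the finitely many bad places of the model. The remaining points to check carefully are the versality claim over $K_w$ and the compatibility of $\tilde{x}^{\#}$ with base change $K\to K_w$ used in \cref{lem: formula for Hnr evaluation}.
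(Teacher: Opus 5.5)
Your proof is correct and follows essentially the same route as the paper: take $\beta$ with $\Psi(\beta)=\s{R}$, replace one component at a good place $w$ by a point of $X(K_w)$ with nontrivial torsor class (using surjectivity of $X(K_w)\to H^1(K_w,G)$), and evaluate with \cref{lem: formula for Hnr evaluation}. The only difference is the base point: the paper starts from an arbitrary $(x_v)\in X(\BA)^{\hnr^3}$ and uses the infinitude of good places to find $w$ with $\tilde{x}_w^{\#}(\beta)=0$, whereas you start from a rational point lifting to $Y(K)$, so every other local term vanishes because $\s{R}$ is normalized; this is why your remark that a single good place suffices is correct.
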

\begin{proof}
Let $\Psi:\hnr^3(X,\LB)\to \Inv^3(G,\LB)$ be the homomorphism \bref{eq: inverse of Inv3 to Hnr3 map}.
Choose $\beta \in \hnr^3(X,\LB)$ such that $\Psi(\beta)=\s{R}$.
Take any $(x_v)\in X(\BA)^{\hnr^3}$, i.e., $\sum j_v\circ \tilde{x}_v^{\#}(\beta)=0$
(see \bref{eq: BM pairing for Hnr simpler}),
where $\tilde{x}_v^{\#}:\hnr^3(X,\LB)\to H^3(K_v,\LB)$ is the induced map.
However for all but finitely many $v\in C\uun$,
we have $\tilde{x}_v^{\#}(\beta)=0$ (see \ref{para: unramified cohomology}\ref{para: unramified cohomology 3}),
thus we may choose $w\in C\uun$ such that
$\tilde{x}_w^{\#}(\beta)=0$, $H^1(K_w,G)\ne 1$ and $\ker\s{R}_w=1$.

Since $\dif_w:X(K_w)\to H^1(K_w,G)$ is surjective because of $H^1(K_w,\SL_n)=1$,
there exists $z_w\in X(K_w)$ such that $\dif_w(z_w)=[Y_{z_w}]\notin \ker\s{R}_{w}$.
Then \cref{lem: formula for Hnr evaluation} tells us
\[
\tilde{z}_w^{\#}(\beta)=\Psi(\beta)_{K_w}([Y_{z_w}])=\s{R}_w([Y_{z_w}])\ne 0.
\]
If we write $z_v=x_v$ for $v\ne w$,
then $\sum j_v\circ \tilde{z}_v^{\#}(\beta)=j_w\circ \tilde{z}_w^{\#}(\beta)\ne 0$,
i.e., $(z_v)\notin X(\BA)^{\hnr^3}$.
\end{proof}

\section{Unramified obstructions and descents}\label{section: unramified obs with general descent}
This section aims to comparing the unramified obstruction with the descent obstruction.
It is well-known $\hnr^3(K(\SL_n)/K,\LB)\simeq H^3(K,\LB)$ since $\SL_n$ is rational
(for instance, see \cite{Mer02}*{p.~446}).
However, we still have $\hnr^3(\SL_{n},\LB)\simeq H^3(K,\LB)$ for distinct reasons.

\begin{lem}\label{cor: Inv3 of SLn is constant}
There is an isomorphism $\hnr^3(\SL_{n},\LB)\simeq H^3(K,\LB)$ for any $n\ge 2$.
\end{lem}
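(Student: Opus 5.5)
We will realize $\SL_n$ as a quotient $X=Y/\SL_n$ with $Y$ an open subset of affine space, and then apply \cref{thm: Inv3 is Hnr3}. The phrase ``for distinct reasons'' suggests the intended route goes through cohomological invariants rather than rationality.

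Regard $\SL_n$ as an open subscheme of $\BBA^{n^2}$ via the inclusion $\SL_n\subset \mrm{M}_n$. The group $\SL_n$ acts freely on itself by right translation, with quotient $\e K$. This alone does not give a useful presentation, so we use a different one. Take $Y=\BBA^{nm}_K\setminus Z$, where $Z$ is the locus of $n\times m$ matrices of rank $<n$ and $m\ge n+2$. Then $\codim(Z)=m-n+1\ge 3$, and $\SL_n$ acts freely on $Y$ by left multiplication. The quotient $X=Y/\SL_n$ is a standard versal-torsor base. Its unramified $H^3$ is $\Inv^3(\SL_n,\LB)$ by \cref{thm: Inv3 is Hnr3}, and $\Inv^3(\SL_n,\LB)=H^3(K,\LB)$ because $H^1(L,\SL_n)=1$ for every $L/K$, so every invariant is constant. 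This settles the case of $X$, but the lemma concerns $\SL_n$ itself, so a more direct argument is needed.

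Direct approach. Apply the invariant-theoretic formalism to the trivial group acting on $\SL_n$, or better, use the structure of $\SL_n$ as an $\SL_{n-1}$-torsor-like fibration. Concretely, the map $\SL_n\to \BBA^n\setminus\{0\}$ sending a matrix to its first column is a torsor under the stabilizer $H\simeq \SL_{n-1}\ltimes \BBG_a^{n-1}$. Homotopy invariance for $\hnr^3$ (\cite{Mer16red}*{p.~702}, used in \bref{para: converse of evaluation at gen torsor}) handles the $\BBG_a$ part. For the $\SL_{n-1}$-torsor, the argument of \cref{lem: formula for Hnr evaluation} via \cite{BM13} shows that pullback identifies $\hnr^3$ of the base with $\hnr^3$ of the total space whenever the group is special: every torsor over a field is then trivial, so the generic fibre is rational over the function field of the base, and the torsor is Zariski-locally trivial. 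Zariski-locally trivial torsors under a rational connected group have $\hnr^3(E)=\hnr^3(\text{base})$ by homotopy invariance in the Zariski-local product form. This yields $\hnr^3(\SL_n,\LB)\simeq \hnr^3(\BBA^n\setminus\{0\},\LB)$.

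Finally, compute $\hnr^3(\BBA^n\setminus\{0\},\LB)$ using the description \bref{eq: nr cohomology is E0n term} as an intersection over codimension-one points. For $n\ge 2$ the origin has codimension $n\ge 2$, so by purity for the Gersten complex the group equals $\hnr^3(\BBA^n,\LB)=H^3(K,\LB)$ by homotopy invariance.

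The main obstacle is the middle step: establishing $\hnr^3(E)\simeq\hnr^3(B)$ for a Zariski-locally trivial torsor under the special rational group $\SL_{n-1}\ltimes\BBG_a^{n-1}$, as opposed to the rational-variety statement for $\hnr^3(K(\cdot)/K)$. If the local-triviality argument is delicate, the fallback is induction on $n$ using the fibration by first columns together with the case $n=1$ (trivial group). Alternatively, one can embed $\SL_n\subset\GL_n\subset\BBA^{n^2}$ and use the Gersten complex to show that classes unramified along $\SL_n$ extend; the complement of $\SL_n$ in $\{\det=1\}$-type compactifications is harder to control, though.
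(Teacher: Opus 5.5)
The gap is in the middle step, which is where all the content of the lemma lies. You invoke the principle that a Zariski-locally trivial torsor $E\to B$ under a rational connected special group satisfies $\hnr^3(E,\LB)=\hnr^3(B,\LB)$ ``by homotopy invariance in the Zariski-local product form.'' This principle is false. Homotopy invariance only handles fibres $\BBA^r$, and $\hnr^3(-,\LB)=H^0_{\Zar}(-,\CH^3_{\et}(\LB))$ is not a birational invariant, so rationality of the fibre buys nothing. The paper records a counterexample in \bref{para: unramified cohomology}: $\gm$ is rational, connected and special, and $\gm\to\e K$ is a trivial torsor, yet $\hnr^3(\gm,\LB)/H^3(K,\LB)\simeq\Br(K)\neq 0$. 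On a trivializing open $V$, what you would actually need is $\hnr^3(V\times\SL_{n-1},\LB)=\hnr^3(V,\LB)$, which is a relative form of the very statement being proved. Nor does \cref{lem: formula for Hnr evaluation} help. That lemma only shows that the two pullbacks of $\beta$ along the projections from $(Y\times_KY)/G$ agree; it says nothing about pullback along a torsor being bijective on $\hnr^3$. Your treatment of the $\ga$-part and your final computation $\hnr^3(\BBA^n\setminus\{0\},\LB)=H^3(K,\LB)$ are fine.

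Your fallback induction does work, but only with an ingredient you did not supply. First, prove $\hnr^3(\SL_{m,F},\LB)=H^3(F,\LB)$ for \emph{all} fields $F\supset K$ by induction on $m$. The hypothesis must cover arbitrary fields, because it is applied over the function field of the base. For the inductive step, let $U\simeq\ga^{n-1}$ be the unipotent radical of $H$, so $U$ is normal and $H/U\simeq\SL_{n-1}$. Homotopy invariance gives $\hnr^3(\SL_n,\LB)=\hnr^3(E,\LB)$ with $E\ce\SL_n/U$, and $E\to B\ce\BBA^n_F\setminus\{0\}$ is an $\SL_{n-1}$-torsor. Now run the Gersten diagram chase from the proof of \cref{lem: unramified third cohomology of YZ are iso}; you cannot cite that lemma itself, since its proof uses the present statement over $\kappa_\eta$. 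The chase goes as follows. The generic fibre of $E\to B$ is $\SL_{n-1,F(B)}$, because $H^1(F(B),\SL_{n-1})=1$, so its $\hnr^3$ is $H^3(F(B),\LB)$ by the induction hypothesis. The remaining codimension-one points of $E$ are the generic points of the fibres $E_z\simeq\SL_{n-1,\kappa_z}$ for $z\in B\uun$. Finally, $H^2(\kappa_z,\LB(-1))\to H^2(\kappa_z(E_z),\LB(-1))$ is injective because $\kappa_z(E_z)/\kappa_z$ is purely transcendental. Together these give $\hnr^3(B,\LB)\simeq\hnr^3(E,\LB)$, and your purity step finishes the proof. Repaired this way, your route is a self-contained elementary proof valid over any field of characteristic zero. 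The paper instead simply invokes \cite{EKLV98} (Assumption 3.19 and Proposition 3.20(i) there, with $G=H=\SL_n$).
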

\begin{proof}
Applying \cite{EKLV98}*{Assumption 3.19 and Proposition 3.20(i)} to $G=H=\SL_n$ yields 
the desired isomorphism.
\end{proof}

\begin{para}
Let $G$ be a linear $K$-group and embed it into $\SL_n$ for some $n\ge 2$.
Let $X=\SL_n/G$. 
Take any $x\in X(K)$ and 
let $q_x:\SL_n\to X$, $s\mt s.x$.
By the functoriality of cup-product,
we see that
\[
s\in\SL_n(\BA)^{q_x^*(\hnr^3(X))}\iff q_x(s)\in X(\BA)^{\hnr^3(X)}.
\]
But $q_x^*(\hnr^3(X))\subset \hnr^3(\SL_n,\LB)\simeq H^3(K,\LB)$,
so we conclude $q_x(s)=s.x\in X(\BA)^{\hnr^3(X)}$ for any $s\in \SL_n(\BA)$, i.e.,
$\SL_n(\BA).X(K)\subset X(\BA)^{\hnr^3(X)}$.
If its converse holds, we prove that the unramified obstruction is finer than the descent obstruction.
\end{para}

\begin{lem}\label{lem: unramified third cohomology of YZ are iso}
Let $Z$ be a smooth integral $K$-variety and
let $f:Y\to Z$ be a $G$-torsor.
Let ${_n}Y\ce \SL_n\times^GY$ be the contracted product and
let $f_n:{_n}Y\to Z$ be the induced $\SL_n$-torsor.
Then the induced map
\[
f_n^*:\hnr^3(Z,\LB)\to \hnr^3({_n}Y,\LB)
\]
is an isomorphism of abelian groups.
\end{lem}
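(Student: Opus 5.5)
The idea is to exploit that $f_n:{_n}Y\to Z$ is a torsor under $\SL_n$, a special group that is rational and has trivial $H^1$ over every field. Consequently $f_n$ is Zariski-locally trivial, and each fibre over a point of $Z$ is isomorphic to $\SL_n$ over the residue field. I would compare unramified cohomology of $Z$ and ${_n}Y$ through the generic fibre. Injectivity is the easy half. The map $f_n$ admits a rational section, because the generic fibre is a trivial torsor over $K(Z)$ since $H^1(K(Z),\SL_n)=1$. Hence $H^3(K(Z),\LB)\to H^3(K({_n}Y),\LB)$ is injective: $K({_n}Y)=K(Z)(\SL_n)$ is purely transcendental over $K(Z)$, and a $K(Z)$-rational point of $\SL_n$ gives a retraction. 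Since unramified classes embed into the function field cohomology by \bref{eq: nr cohomology is E0n term}, $f_n^*$ is injective.

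For surjectivity, take $\beta\in\hnr^3({_n}Y,\LB)$ and view it in $H^3(K(Z)(\SL_n),\LB)$. Its restriction to the generic fibre $\SL_{n,K(Z)}$ is unramified there. I would apply \cref{cor: Inv3 of SLn is constant} over the field $K(Z)$; the cited result of \cite{EKLV98} works over any base field of characteristic zero, or one can use that $\SL_n$ is rational together with homotopy invariance. This gives $\beta=f_n^*\gamma$ for a unique $\gamma\in H^3(K(Z),\LB)$.

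It remains to show that $\gamma$ is unramified on $Z$, i.e.\ $\gamma\in \het^3(\CO_{Z,P},\LB)$ for every codimension one point $P$. Here I use Zariski local triviality. Over a neighbourhood of $P$ we have ${_n}Y\simeq U\times\SL_n$, so there is a section $s:U\to{_n}Y$ (for instance the identity section). Then $\gamma=s^*\beta$ in $H^3(K(Z),\LB)$, since $s^*f_n^*=\id$. Because $\beta$ is unramified along the divisor $s(\bar P)$, the class $s^*\beta$ lies in $\het^3(\CO_{Z,P},\LB)$. To justify pulling back an unramified class along $s$, I would use functoriality of $H^0_{\Zar}(-,\CH^3_{\et}(\LB))$ for morphisms of smooth varieties, via Bloch--Ogus/Rost.

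The main obstacle is this last functoriality, together with identifying $s^*\beta$ with $\gamma$ at the generic point. The generic point of $Z$ maps via $s$ to a non-generic point of ${_n}Y$, so the identification needs the rigidity statement that restriction of $f_n^*\gamma$ to any section recovers $\gamma$. I would handle this with the same $\pr_1^*=\pr_2^*$ type argument of \cite{BM13} used in \cref{lem: formula for Hnr evaluation}, or more simply by working with $H^0_{\Zar}$ of the local scheme $U\times\SL_n$ and the unit section.
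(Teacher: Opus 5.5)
Your proof is correct, but the key step (showing that the descended class $\gamma\in H^3(K(Z),\LB)$ is unramified on $Z$) is done differently from the paper.

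The paper compares the two residue sequences for $Z$ and ${_n}Y$. By flatness, the codimension-one points of ${_n}Y$ are those of the generic fibre ${_n}Y_{\eta}\simeq\SL_{n,\kappa_\eta}$, together with the generic points of the fibres ${_n}Y_z\simeq\SL_{n,\kappa_z}$ for $z\in Z\uun$. The middle comparison is the same input you use, $\hnr^3(\SL_{n,\kappa_\eta},\LB)\simeq H^3(\kappa_\eta,\LB)$. A diagram chase then reduces everything to the injectivity of $\Br(\kappa_z)\to\Br(\kappa_z({_n}Y_z))$, which follows from a rational point on the fibre and the injectivity of the Brauer group of a smooth variety into that of its function field.

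You instead trivialize $f_n$ over $\CO_{Z,P}$ (possible since $H^1(R,\SL_n)=1$ for a local ring $R$) and pull $\beta$ back along a section. This uses only the formal functoriality of $H^0_{\Zar}(-,\CH^3_{\et}(\LB))$ and the injectivity \bref{eq: nr cohomology is E0n term}. It avoids residue maps entirely, including their compatibility with the smooth pull-back $f_n$, and it avoids the Brauer-group input. The cost is that you need the torsor to be trivial over local rings, not just over the residue fields $\kappa_z$. Your direct injectivity argument (rational point on the generic fibre) is equivalent to what the paper gets from the diagram chase.

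The step you flag as the main obstacle is short and needs nothing like the $\pr_1^*=\pr_2^*$ argument of \cref{lem: formula for Hnr evaluation}:
\begin{itemize}
\item On the generic fibre, $\beta|_{{_n}Y_\eta}$ and $f_n^*\gamma$ are two elements of $\hnr^3({_n}Y_\eta,\LB)$ with the same image in $H^3(K({_n}Y),\LB)$. They therefore coincide by \bref{eq: nr cohomology is E0n term} applied over $K(Z)$.
\item Evaluating at the $K(Z)$-point $s(\eta)$ gives $(s^*\beta)|_\eta=\gamma$.
\item Since $s^*\beta$ is a section of $\CH^3_{\et}(\LB)$ near $P$, $\gamma$ comes from $\het^3(\CO_{Z,P},\LB)$.
\end{itemize}

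Two corrections. First, the image under $s$ of the closure of $P$ is not a divisor on ${_n}Y$. Drop the sentence about $\beta$ being unramified along it and rely on the functoriality argument you give right after. Second, your parenthetical alternative (``$\SL_n$ is rational plus homotopy invariance'') does not prove $\hnr^3(\SL_{n,F},\LB)=H^3(F,\LB)$. This version of unramified cohomology is not a birational invariant of non-proper varieties: the paper notes $\hnr^3(\gm,\LB)/H^3(K,\LB)\simeq\Br(K)$ even though $\gm$ is rational. You genuinely need the result quoted in \cref{cor: Inv3 of SLn is constant}, applied over $K(Z)$, exactly as the paper does.
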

\begin{proof}
Let $\eta$ be the generic point of $Z$ and
let ${_n}Y_{\eta}$ be the generic fibre of ${_n}Y\to Z$.
For any $y\in Y$ and any $z\in Z$,
let $\kappa_y$ and $\kappa_z$ be their respective residue fields. 
Then $f_n:{_n}Y\to Z$ induces a \cmdm
\beac\label{diag: unramified coh sequence}
\xm{
0\ar[r] & \hnr^3(Z,\LB)\ar[r]\ar[d] & H^3(\kappa_{\eta},\LB)\ar[r]\ar[d] & \bigoplus\limits_{z\in Z\uun}H^2(\kappa_z,\LB(-1))\ar[d] \\
0\ar[r] & \hnr^3({_n}Y,\LB)\ar[r] & \hnr^3({_n}Y_{\eta},\LB)\ar[r] & \bigoplus\limits_{z\in Z\uun}\bigoplus\limits_{y\in {_n}Y_z^{(0)}}H^2(\kappa_y,\LB(-1)).
}
\eeac
The upper row is exact by \cite{CT95}*{Theorem 4.1.1} and
the exactness of the lower row follows from the exactness of
\[
0\to \hnr^3({_n}Y,\LB)
\to \ker\big(H^3(K({_n}Y),\LB)\to \tst\bigoplus\limits_{y \in {_n}Y\uun_{\eta}} H^2(\kappa_y,\LB(-1))\big)
\to \tst\bigoplus\limits_{y\in {_n}Y\uun} H^2(\kappa_y,\LB(-1))
\]
whose middle term is nothing but $\hnr^3({_n}Y_{\eta},\LB)$ by \cite{CT95}*{Theorem 4.1.1} again.
But ${_n}Y_{\eta}\to \kappa_{\eta}$ is an $\SL_n$-torsor,
so ${_n}Y_{\eta}\simeq \SL_n$ as $\kappa_{\eta}$-varieties (by $H^1(\kappa_{\eta},\SL_n)=1$).
So we deduce
\[
\hnr^3({_n}Y_{\eta},\LB)
\simeq \hnr^3(\SL_n,\LB)
\simeq H^3(\kappa_{\eta},\LB).
\]
By a diagram chase, we reduce to prove the injectivity of the right vertical arrow in \bref{diag: unramified coh sequence}.

Take any $z\in Z\uun$.
The fibre ${_n}Y_z\to \e\kappa_z$ is then an $\SL_n$-torsor,
so ${_n}Y_z\simeq \SL_n$ as $\kappa_z$-varieties.
Hence ${_n}Y_z^{(0)}$ consists of the generic point of ${_n}Y_z$. 
Recall \bref{eq: Kummer sequence H2 and Br}
and note that
\[
H^2(\kappa_z,\LB(-1))\to \tst\bigoplus\limits_{y\in {_n}Y_z^{(0)}}H^2(\kappa_y,\LB(-1))=H^2(\kappa_z({_n}Y_z),\LB(-1))
\]
factors as $\Br(\kappa_z)\to \Br({_{n}}Y_z)\to \Br(\kappa_z({_{n}}Y_z))$
whose former map is injective since ${_n}Y_z(\kappa_z)\simeq \SL_n(\kappa_z)\ne\es$ and
latter one is injective by \cite{CTS21}*{Theorem 3.5.5}. 
%
\end{proof}

\begin{thm}\label{thm: compare hnr and descent}
Let $G$ be a linear $K$-group and fix a $K$-embedding $G\to \SL_n$ for some $n\ge 2$.
The following are equivalent.
\benumr
\itm\label{thm: compare hnr and descent 1}
Let $X=\SL_n/G$.
There is an identification of sets $X(\BA)^{\hnr^3}={\SL_n(\BA).X(K)}$.

\itm\label{thm: compare hnr and descent 2}
Let $Z$ be any smooth integral variety over $K$.
For any $G$-torsor $f:Y\to Z$,
there is an inclusion $Z(\BA)^{\hnr^3}\subset Z(\BA)^f$.
\eenum
\end{thm}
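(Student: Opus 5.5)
We prove the two implications separately, following the sketch in the introduction.

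\emph{(ii)$\Rightarrow$(i).} We take $Z=X=\SL_n/G$ and $f:Y=\SL_n\to X$, which is a $G$-torsor. By (ii), $X(\BA)^{\hnr^3}\subset X(\BA)^f=\bigcup_{[\sg]\in H^1(K,G)}{_\sg}f({_\sg}Y(\BA))$. A twist ${_\sg}Y\to X$ with ${_\sg}Y(\BA)\ne\es$ is the torsor $\SL_n\to X$ translated by a rational point. Indeed, $[\sg]$ lies in the image of the connecting map $X(K)\to H^1(K,G)$ because $H^1(K,\SL_n)=1$. Writing $[\sg]$ as the class of the fibre over some $x\in X(K)$, the twist ${_\sg}f$ is identified with the orbit map $q_x:\SL_n\to X$, $s\mapsto s.x$. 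Hence ${_\sg}f({_\sg}Y(\BA))=\SL_n(\BA).x$, and so $X(\BA)^{\hnr^3}\subset \SL_n(\BA).X(K)$. The reverse inclusion was shown in the paragraph preceding \cref{lem: unramified third cohomology of YZ are iso}, using \cref{cor: Inv3 of SLn is constant}. Together these give (i).

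\emph{(i)$\Rightarrow$(ii).} Let $f:Y\to Z$ be a $G$-torsor and $f_n:{_n}Y=\SL_n\times^GY\to Z$ the induced $\SL_n$-torsor. There is a natural $\SL_n$-equivariant morphism $\pi:{_n}Y\to X=\SL_n/G$, $[s,y]\mapsto sG$ (more precisely $s^{-1}$ or $s$, depending on the chosen action conventions), and $f:Y\to Z$ is recovered as the pullback of ${_n}Y\to X$ along the point $eG$. Equivalently, $[Y]\in H^1(Z,G)$ is the image of the section, which gives descent data compatible with $H^1(Z,G)\to H^1(Z,\SL_n)$.

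Take $z=(z_v)\in Z(\BA)^{\hnr^3}$. Since $H^1(K_v,\SL_n)=1$, and $H^1(\CO_v,\SL_n)=1$ for almost all $v$ (Lang plus Hensel on a smooth model), we can lift $z$ to an adelic point $y=(y_v)\in {_n}Y(\BA)$.

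We claim $y\in {_n}Y(\BA)^{\hnr^3}$. By \cref{lem: unramified third cohomology of YZ are iso}, every $\beta'\in\hnr^3({_n}Y,\LB)$ has the form $f_n^*\beta$ with $\beta\in\hnr^3(Z,\LB)$. Functoriality of the pairing \eqref{eq: BM pairing for Hnr simpler} then gives $\lip y,f_n^*\beta\rip_{\nr}=\lip z,\beta\rip_{\nr}=0$, which proves the claim. By functoriality of $X(\BA)^{\hnr^3}$ under $\pi$, it follows that $\pi(y)\in X(\BA)^{\hnr^3}$. By (i), $\pi(y)=s.x$ with $s\in\SL_n(\BA)$ and $x\in X(K)$.

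Now let $x$ correspond to the class $[\sg]\in H^1(K,G)$ given by the fibre of $\SL_n\to X$ over $x$. The local classes $z_v^*[Y]\in H^1(K_v,G)$ are the classes of the fibres of $\pi$-pullbacks, namely of $\SL_n\to X$ over $\pi(y_v)$. These equal the images of $[\sg]$, because $\pi(y_v)=s_v.x$ and translating by $s_v\in\SL_n(K_v)$ does not change the $G$-torsor class of the fibre. Hence $(z_v^*[Y])\in\Im(H^1(K,G)\to\prod H^1(K_v,G))$, and by the twisting description of $Z(\BA)^f$ in \S\ref{section: notation and conventions} this means $z\in Z(\BA)^f$.

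The main obstacle is the precise identification used in the last step. We must check that the $G$-torsor class of $f$ at a point $z_v$ equals the class of the fibre of $\SL_n\to\SL_n/G$ over $\pi(y_v)$, for any lift $y_v$. This is the standard bijection $X(L)/\SL_n(L)\simeq \ker(H^1(L,G)\to H^1(L,\SL_n))=H^1(L,G)$, applied with $L=K_v$ and $L=K$, together with the compatibility of contracted products with the connecting map. The integrality of the lift $y$ at almost all places is a routine point settled with smooth models.
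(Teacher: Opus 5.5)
Your proof is correct and follows the paper's argument in both directions: lift $z$ through $f_n$, use the surjectivity of $f_n^*:\hnr^3(Z,\LB)\to\hnr^3({_n}Y,\LB)$, push forward by $\pi$, then apply (i). The only difference is the last step of (i)$\Rightarrow$(ii), where you compare the local classes $z_v^*[Y]$ and invoke the twisting description of $Z(\BA)^f$ from \S2; the paper instead translates $y$ by $s^{-1}$ into the fibre $\pi^{-1}(x)\simeq{_\sigma}Y$, which exhibits $z$ as the image of an adelic point of the twist without needing that description.

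One small slip: the residue fields $\kappa(v)$ here are $p$-adic fields, so Lang's theorem does not apply. The claim $H^1(\CO_v,\SL_n)=1$ still holds, because $\SL_n$-torsors over a local ring are trivial.
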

\begin{proof}
We write $\f{X}(\BA)^{\hnr^3(\f{X})}$ instead of $\f{X}(\BA)^{\hnr^3(\f{X},\LB)}$ to avoid heavy notations.
\bitem
\itm
Assume \ref{thm: compare hnr and descent 1}.
Let ${_n}Y\ce \SL_n\times^GY$ be the contracted product.
Let $f_n:{_n}Y\to Z$ be the induced $\SL_n$-torsor and
let $\pi:{_n}Y\to X$ be the projection. 
By \cref{lem: unramified third cohomology of YZ are iso} and functoriality,
we obtain
\[
{_n}Y(\BA)^{\hnr^3({_n}Y)}=f_n\ui(Z(\BA)^{\hnr^3(Z)}).
\]
It follows that
\[
Z(\BA)^{\hnr^3(Z)}
=f_n({_n}Y(\BA)^{\hnr^3({_n}Y)})
\subset f_n({_n}Y(\BA)^{\pi^*\hnr^3(X)})
\subset f_n(\pi\ui(X(\BA)^{\hnr^3(X)})).
\]
Now take any $z\in Z(\BA)^{\hnr^3(Z)}$ and
suppose $z=f_n(y)$ for some $y\in {_n}Y(\BA)$ such that $\pi(y)\in X(\BA)^{\hnr^3(X)}$.
By assumption, we may assume $\pi(y)=s.x$ for some $s\in \SL_n(\BA)$ and some $x\in X(K)$.
So we obtain $s\ui.y\in {_n}Y_x(\BA)$ and $f_n(s\ui.y)=f_n(y)=z$.
Taking the following exact sequence of pointed sets
\[
1\to G(K)\to \SL_n(K)\stra{q} X(K)\stra{\dif} H^1(K,G)\to 1
\]
into account, we conclude
${_n}Y_x\simeq q\ui(x)\times^GY\simeq {_{\sg}}Y$ with $[\sg]=\dif(x)\in H^1(K,G)$.
Thus $z\in {_{\sg}}f({_{\sg}}Y(\BA))\subset Z(\BA)^f$, as desired.

%

\itm
Conversely, take $Y=\SL_n$ and $Z=X$.
It suffices to show $X(\BA)^f=\SL_n(\BA).X(K)$.
Consider the exact sequence of pointed sets
\[
\SL_n(K)\stra{f} X(K)\stra{\dif} H^1(K,G)\to 1.
\]
Take any $[\sg]\in H^1(K,G)$ and
say $[\sg]=\dif(x)$ for some $x\in X(K)$.
Note that ${_{\sg}}\SL_n$ is an $\SL_n$-torsor
because ${_{\sg}}\SL_n\ce \SL_n\times^Gf\ui(x)$ commutes with the multiplication of $\SL_n$ from the left.
But $H^1(K,\SL_n)=1$,
so there is an isomorphism
$\lb_{\sg}:{_{\sg}}\SL_n\simeq \SL_n$ of $K$-varieties
which implies ${_{\sg}}f(y)=\lb_{\sg}(y).x$ for any $y\in {_{\sg}}\SL_n(\BA)$.
Hence we obtain $X(\BA)^f=\SL_n(\BA).X(K)$.
\qedhere
\eitem
\end{proof}

\small
\noindent Yisheng TIAN \\
Institute for Advanced Study in Mathematics \\
Harbin Institute of Technology \\
Harbin 150001, China \\
Email: tysmath@mail.ustc.edu.cn

\end{document}

%% file: intro.tex
\usepackage[alphabetic,lite]{amsrefs}                                          
\usepackage{amssymb}                                                           
\usepackage{amsthm}                                                            
\usepackage{appendix}                                                          
\usepackage{array}                                                             
\usepackage{bm}                                                                
\usepackage{colonequals}                                                       
\usepackage{enumitem}                                                          
\usepackage{fullpage}                                                          
\usepackage{graphicx}                                                          
\usepackage{indentfirst}                                                       
\usepackage{mathrsfs}                                                          
\usepackage{mathtools}                                                         
\usepackage{moreenum}                                                          
\usepackage{multirow}
\usepackage{pdfpages}
\usepackage{stmaryrd}                                                          
\usepackage{titling}
\usepackage{verbatim}                                                          
\usepackage[all,cmtip]{xy}                                                     
\usepackage{tikz}
\usepackage{xcolor}                                                            

\definecolor{tianred}{rgb}{0.57, 0.36, 0.51}                                   
\definecolor{tianblue}{rgb}{0.0, 0.22, 0.66}                                   
\definecolor{tianpink}{rgb}{0.88, 0.56, 0.59}                                  
\definecolor{tiangreen}{rgb}{0.24, 0.82, 0.44}                                 
\definecolor{Prune}{RGB}{99,0,60}

\usepackage[colorlinks,
            linkcolor=tianred,
            anchorcolor=tiangreen,
            citecolor=tianblue,
            urlcolor=tianpink]{hyperref}                                      
\usepackage{cleveref}                                                          

\makeatletter
\DeclareRobustCommand\widecheck[1]{{\mathpalette\@widecheck{#1}}}
\def\@widecheck#1#2{%
    \setbox\z@\hbox{\m@th$#1#2$}%
    \setbox\tw@\hbox{\m@th$#1%
       \widehat{%
          \vrule\@width\z@\@height\ht\z@
          \vrule\@height\z@\@width\wd\z@}$}%
    \dp\tw@-\ht\z@
    \@tempdima\ht\z@ \advance\@tempdima2\ht\tw@ \divide\@tempdima\thr@@
    \setbox\tw@\hbox{%
       \raise\@tempdima\hbox{\scalebox{1}[-1]{\lower\@tempdima\box
\tw@}}}%
    {\ooalign{\box\tw@ \cr \box\z@}}}
\makeatother

\newcommand{\beac}{\begin{equation}\begin{array}{c}}                           
\newcommand{\eeac}{\end{array}\end{equation}}                                  

\newcommand{\beq}{\begin{equation}}
\newcommand{\eeq}{\end{equation}}

\newcommand{\brmks}{\begin{rmk}\hfill\benuma}
\newcommand{\ermks}{\eenum\end{rmk}}

\newcommand{\benum}{\begin{enumerate}[label={{\upshape(\alph*)}}]}             
\newcommand{\benuma}{\begin{enumerate}[label={{\upshape(\arabic*)}}]}          
\newcommand{\benumr}{\begin{enumerate}[label={{\upshape(\roman*)}}]}           
\newcommand{\eenum}{\end{enumerate}}
\newcommand{\bitem}{\begin{itemize}}                                           
\newcommand{\eitem}{\end{itemize}}                                             

\newcommand{\xm}{\xymatrix}                                                    

\newcommand{\tst}{\textstyle}                                                  
\newcommand{\tstsl}{\tst\sum\limits}                                           
\newcommand{\tstpl}{\tst\prod\limits}                                          
\newcommand{\tstops}{\tst\bigoplus\limits}                                     

\theoremstyle{plain}      \newtheorem{thm}{Theorem}[section]                   %
\theoremstyle{plain}      \newtheorem{lem}[thm]{Lemma}                         %
\theoremstyle{plain}      \newtheorem{cor}[thm]{Corollary}                     %
\theoremstyle{plain}      \newtheorem{prop}[thm]{Proposition}                  %
\theoremstyle{plain}                   %
\theoremstyle{definition} \newtheorem{rmk}[thm]{Remark}                        %
\theoremstyle{definition}                      %
\theoremstyle{definition}                       %
\theoremstyle{definition}                         %
\theoremstyle{definition}                        %
\theoremstyle{definition}                    %
\theoremstyle{definition}                        %
\theoremstyle{definition}                    %
\theoremstyle{definition}                  %
\theoremstyle{definition} \newtheorem{construction}[thm]{Construction}         %
\theoremstyle{definition}              %
\theoremstyle{definition}                  %
\theoremstyle{definition} \newtheorem{prop-df}[thm]{Proposition-Definition}    %

\theoremstyle{plain}                   %
\theoremstyle{plain}                             %
\theoremstyle{plain}                         %
\theoremstyle{definition}                      %
\theoremstyle{definition}                   %
\theoremstyle{definition}                        %
\theoremstyle{definition}                      %
\theoremstyle{definition}                  %
\theoremstyle{definition}                       %

\theoremstyle{definition}
\newtheorem*{construction*}{Construction}                                      %
\newtheorem*{conjecture*}{Conjecture}                                          %
\newtheorem*{hypothesis*}{Hypothesis}                                          %
\newtheorem*{convention*}{Convention}                                          %
\newtheorem*{notation*}{Notation}                                              %
\newtheorem*{prop*}{Proposition}                                               %
\newtheorem*{summary*}{Summary}                                                %
\newtheorem*{qt*}{Question}                                                    %
\newtheorem*{rmk*}{Remark}                                                     %
\newtheorem*{fact*}{Fact}                                                      %
\newtheorem*{lizi*}{Example}                                                   %
\newtheorem*{df*}{Definition}                                                  %
\theoremstyle{plain}
\newtheorem*{thm*}{Theorem}                                                    %
\newtheorem*{lem*}{Lemma}                                                      %
\newtheorem*{axiom*}{Axiom}                                                    %

\newtheoremstyle{subsection-tweak}
{}{}{}{}{\bfseries}{}{.5em}{\thmnumber{(\@{#1}{}\@{#2}).}\thmnote{~{\bfseries#3.}}}
\theoremstyle{subsection-tweak}
\newtheorem{para}[thm]{}

\crefname{thm}{\textup{Theorem}}{Theorems}                                     %

\crefname{lem}{\textup{Lemma}}{Lemmas}
\crefname{eg}{\textup{Example}}{Examples}
\crefname{ex}{\textup{Exercise}}{Exercise}
\crefname{rmk}{\textup{Remark}}{Remarks}
\crefname{cor}{\textup{Corollary}}{Corollaries}
\crefname{df}{\textup{Definition}}{Definitions}
\crefname{question}{\textup{Question}}{Questions}
\crefname{prop}{\textup{Proposition}}{Propositions}
\crefname{conjecture}{\textup{Conjecture}}{Conjectures}
\crefname{convention}{\textup{Convention}}{Conventions}
\crefname{construction}{\textup{Construction}}{Constructions}
\crefname{hypo}{\textup{Hypothesis}}{Hypothesis}

\crefname{thme}{Th\'eo\`eme}{Th\'eo\`emes}
\crefname{lemme}{Lemme}{Lemmes}
\crefname{ege}{Exemple}{Exemples}
\crefname{core}{Corollaire}{Corollaires}
\crefname{rmke}{Remarque}{Remarques}
\crefname{dfe}{D\'efinition}{D\'efinitions}

\newcommand{\bref}[1]{\textup{(\ref{#1})}}

\newcommand{\al}{\alpha}                                                       
\newcommand{\sg}{\sigma}                                                       
\newcommand{\lb}{\lambda}                                                      
\newcommand{\LB}{\Lambda}                                                      

\usepackage[OT2,T1]{fontenc}
\DeclareSymbolFont{cyrletters}{OT2}{wncyr}{m}{n}
\DeclareMathSymbol{\RBe}{\mathalpha}{cyrletters}{"42}                          
\DeclareMathSymbol{\Che}{\mathalpha}{cyrletters}{"51}                          
\DeclareMathSymbol{\Sha}{\mathalpha}{cyrletters}{"58}                          

\newcommand{\f}{\mathfrak}
\newcommand{\s}{\mathscr}
\newcommand{\mbf}{\mathbf}
\newcommand{\mrm}{\mathrm}

\newcommand{\BA}{\mathbf{A}}   
\newcommand{\BC}{\mathbf{C}}   
\newcommand{\BE}{\mathbf{E}}   \newcommand{\BF}{\mathbf{F}}

   \newcommand{\BR}{\mathbf{R}}

  \newcommand{\CF}{\mathcal{F}}
  \newcommand{\CH}{\mathcal{H}}
  
\newcommand{\CK}{\mathcal{K}}  
\newcommand{\CM}{\mathcal{M}}  
\newcommand{\CO}{\mathcal{O}}

  \newcommand{\CX}{\mathcal{X}}

\newcommand{\BBA}{\mathbb{A}}

\newcommand{\BBG}{\mathbb{G}}

  \newcommand{\BBP}{\mathbb{P}}

  \newcommand{\sH}{\mathscr{H}}

\newcommand{\lif}{_{\infty}}                                                   

\newcommand{\ui}{^{-1}}                                                        
\newcommand{\uun}{^{(1)}}                                                      

\newcommand{\ol}{\overline}                                                    
\newcommand{\ul}{\underline}                                                   
\newcommand{\wc}{\widecheck}                                                   

\newcommand{\Z}{\mathbb{Z}}                                                    
\newcommand{\Q}{\mathbb{Q}}                                                    

\newcommand{\Qp}{\mathbb{Q}_{p}}                                               
\newcommand{\QZ}{\mathbb{Q}/\mathbb{Z}}                                        

\newcommand{\olK}{\overline{K}}                                                
\newcommand{\olX}{\overline{X}}                                                
\newcommand{\olY}{\overline{Y}}                                                

\newcommand{\lip}{\langle}                                                     
\newcommand{\rip}{\rangle}                                                     

\DeclareMathOperator{\id}{id}                                                  

\DeclareMathOperator{\tz}{char}                                                

\DeclareMathOperator{\ops}{\oplus}                                             
\DeclareMathOperator{\ots}{\otimes}                                            

\DeclareMathOperator{\codim}{codim}                                            

\DeclareMathOperator{\e}{Spec}                                                 

\newcommand{\Set}{\mathfrak{Set}}                                              
\newcommand{\Field}{\mathfrak{Field}}                                          
\newcommand{\Sch}{\mathfrak{Sch}}                                              

\DeclareMathOperator{\Hom}{Hom}                                                
\renewcommand{\hom}{\Hom}                                                      

\newcommand{\drl}{\varinjlim}                                                  

\DeclareMathOperator{\Ker}{Ker}                                                
\renewcommand{\ker}{\Ker}                                                      
\DeclareMathOperator{\Image}{Im}                                               
\renewcommand{\Im}{\Image}                                                     

\DeclareMathOperator{\dif}{\partial}                                           

\DeclareMathOperator{\Pic}{Pic}                                                
\DeclareMathOperator{\Br}{Br}                                                  

\DeclareMathOperator{\rmCH}{CH}                                                

\DeclareMathOperator{\Inv}{Inv}                                                

\newcommand{\gm}{\BBG_m}                                                       
\newcommand{\ga}{\BBG_a}                                                       
\DeclareMathOperator{\SL}{\mathbf{SL}}                                         
\DeclareMathOperator{\Spin}{\mathbf{Spin}}                                     

\DeclareMathOperator{\rad}{rad}                                                

\newcommand{\Zar}{\mathrm{Zar}}                                                
\newcommand{\et}{\mathrm{\acute{e}t}}                                          

\newcommand{\nr}{{\mathrm{nr}}}                                                

\newcommand{\het}{H_{\et}}                                                     
\newcommand{\hc}[1]{\wc{H}{^{#1}}}                                             

\newcommand{\hnr}{H_{\nr}}                                                     

\newcommand{\stra}[1]{\stackrel{#1}{\to}}                                      
\newcommand{\mt}{\mapsto}                                                      

\newcommand{\qand}{\quad\ \textup{and}\quad\ }                                 
\newcommand{\vem}{\vspace{1em}}                                                
\newcommand{\zc}{\,|\,}                                                        

\newcommand{\es}{\varnothing}                                                  
\DeclareMathOperator{\pr}{pr}                                                  

\newcommand{\ce}{\colonequals}                                                 

\newcommand{\seq}[2]{{#1}_1,\dots,{#1}_{{#2}}}                                 

\newcommand{\inpart}{In particular,~}                                          
\newcommand{\ttiff}{if and only if~}                                           
\newcommand{\wrt}{with respect to~}                                            
\newcommand{\cmdm}{commutative diagram~}                                       

\newcommand{\Kunneth}{K{\"u}nneth~}                                            

\newcommand{\itm}{\item}                                                       

\DeclareMathOperator{\ev}{ev}                                                  

